\providecommand{\U}[1]{\protect\rule{.1in}{.1in}}
\pgfplotsset{compat=1.8}
\newcommand{\quotes}[1]{``#1''}
\renewcommand\and{\end{tabular}\kern-\tabcolsep\ and\ \kern-\tabcolsep\begin{tabular}[t]{c}}
\let\origthanks\thanks
\renewcommand\thanks[1]{\begingroup\let\rlap\relax\origthanks{#1}\endgroup}
\newtheorem{assumption}{Assumption}
\newtheorem{theorem}{Theorem}
\newtheorem{corollary}{Corollary}
\newtheorem{definition}{Definition}
\newtheorem{lemma}{Lemma}
\newtheorem{proposition}{Proposition}
\begin{document}

\title{The Viral State Dynamics of the Discrete-Time NIMFA Epidemic Model}
\author{Bastian Prasse\thanks{ Faculty of Electrical Engineering, Mathematics and
Computer Science, P.O Box 5031, 2600 GA Delft, The Netherlands; \emph{email}:
b.prasse@tudelft.nl, p.f.a.vanmieghem@tudelft.nl} \and Piet Van Mieghem\footnotemark[1]}
\date{Delft University of Technology \\
March 19, 2019}
\maketitle
\begin{abstract}
The majority of research on epidemics relies on models which are formulated in continuous-time. However, real-world epidemic data is gathered and processed in a digital manner, which is more accurately described by discrete-time epidemic models. We analyse the discrete-time NIMFA epidemic model on directed networks with heterogeneous spreading parameters. In particular, we show that the viral state is increasing and does not overshoot the steady-state, the steady-state is globally exponentially stable, and we provide linear systems that bound the viral state evolution. Thus, the discrete-time NIMFA model succeeds to capture the qualitative behaviour of a viral spread and provides a powerful means to study real-world epidemics. 
\end{abstract}

\section{Introduction}\label{sec:introduction}
Originating from the study of infectious human diseases \cite{bailey1975mathematical, anderson1992infectious}, epidemiology has evolved into a field with a broad spectrum of applications, such as the spread of computer viruses, opinions, or social media content \cite{pastor2015epidemic, nowzari2016analysis}. The mutual characteristic of epidemic phenomena is that they can be modelled by a viral infection, i.e. every individual is either infected (with the opinion, social media content, etc.) or healthy. An imperative element for epidemics is the infection of one individual by another, provided that the individuals are linked (for instance by physical proximity). The epidemic model that we consider in this work describes the spread of a virus on a higher level, by merging individuals with similar characteristics (such as residence or age) into groups.

We consider a network of $N$ nodes\footnote{In this work, we use the words group and node interchangeably.}, and we denote the fraction of infected individuals of group $i$ by the viral state $v_i(t)$ at any time $t\ge 0$. For node $i$, the \textit{continuous-time} NIMFA model \cite{van2009virus, van2011n, mieghem2014homogeneous} with heterogeneous spreading parameters is given by 
\begin{align}
	\frac{d v_i (t)}{d t } & = - \delta_i v_i(t) +\left( 1 - v_i(t)\right) \sum^N_{j = 1} \beta_{i j} v_j(t).  \label{NIMFA_continuous}
\end{align}
Here, $\beta_{ij} \ge 0$ denotes the infection rate from group $j$ to group $i$, and $\delta_i>0$ denotes the curing rate of group $i$. If $\beta_{ij} > 0$, then infections occur from group $j$ to group $i$, and we emphasise that $\beta_{ii} = 0$ does \textit{not} hold in general, since members of the same group $i$ may possibly infect one another. The \textit{discrete-time NIMFA model} is obtained from the continuous-time NIMFA (\ref{NIMFA_continuous}) by applying Euler's method \cite{stoer2013introduction}, with sampling time $T > 0$, and the discrete-time curing and infection rates follow as $q_i = \delta_i T$ and $w_{ij} = \beta_{ij} T$, respectively. 
\begin{definition}[Discrete-Time NIMFA Model]
 The discrete-time NIMFA model is given by
\begin{align}
v_i [k + 1] & = (1 - q_i) v_i[k] +  (1 - v_i[k])  \sum^N_{j=1} w_{i j} v_j[k] \label{NIMFA_disc_}
\end{align}
for every group $i=1, ..., N$, where $k \in \mathbb{N}$ denotes the discrete time slot, $q_i >0$ is the discrete-time \emph{curing rate}, and $w_{i j} \ge 0$ is the discrete-time \emph{infection rate} from group $j$ to group $i$. 
\end{definition}
As vector equations, (\ref{NIMFA_disc_}) reads
\begin{align}
v [k + 1] & = \textup{\textrm{diag}}(u - q)  v[k] + \textup{\textrm{diag}}(u - v[k]) W v[k],\label{NIMFA_disc_stacked}
\end{align}
where the viral state vector at discrete time $k$ equals $v[k] = (v_1 [k], ..., v_N [k])^T$, the curing rate vector equals $q = (q_1, ..., q_N)^T$, the $N \times N$ infection rate matrix $W$ is composed of the elements $w_{i j}$, and $u$ is the $N \times 1$ all-one vector. The infection rate matrix $W$ corresponds to a weighted adjacency matrix: If $w_{ij}>0$, then there is a directed link from node $j$ to node $i$. The \textit{steady-state vector} $v_\infty$ of the discrete-time NIMFA model (\ref{NIMFA_disc_stacked}) is significant, because it corresponds to the endemic state of the disease in the network.
\begin{definition}[Steady-State Vector]
The steady-state vector $v_\infty = (v_{\infty, 1}, ..., v_{\infty, N})^T$ is, if existent, the non-zero equilibrium of the discrete-time NIMFA model (\ref{NIMFA_disc_}), which satisfies
  \begin{align} \label{steady_state_componentwise} 
  \sum^N_{j=1} w_{ij}  v_{\infty, j} =  q_i \frac{v_{\infty, i}}{1 - v_{\infty, i}}, \quad i=1, ..., N.
  \end{align}  
\end{definition}
We argue that \textit{the discrete-time NIMFA system (\ref{NIMFA_disc_stacked}) is (one of) the simplest epidemic models that meets the practical requirements of modelling real-world epidemics on networks.} In particular, the NIMFA system succeeds to exhibit the following six properties, which are crucial for modelling and processing real-world epidemic data:    
\begin{enumerate}
\item[P1.] The viral state $v_i$ of every node $i$ corresponds to a fraction of infected individuals in group $i$, and, hence, $0\le v_i \le 1$ for every node $i$. In theory, modelling an epidemic per individual, e.g. by assigning every individual of the population a binary value (healthy or infected), may be more accurate than combining individuals into groups. However, it is infeasible in practice to determine the viral state of \textit{every individual at every time $t$}. Instead, one only has access to a (ideally unbiased) sampled subset of individuals. These samples give an estimate of the fraction of infected individuals in a homogeneous group, which may be a contiguous geographic region or a set of individuals with the same characteristics (same age, gender, vaccination status etc.).

\item[P2.] The viral state $v$ evolves in discrete time $k$. For the simulation of a viral spread, an implicit discretisation is performed for the majority of continuous-time epidemic models due to the absence of closed-form solutions for the viral state $v(t)$. Hence, a more accurate approach is to directly study the epidemic model in discrete-time. Furthermore, data on real-world epidemics is often collected periodically\footnote{For instance, the German Robert Koch Institute gathers and provides online access to cases of notifiable diseases with the web-based query system \textit{SurvStat@RKI 2.0} on a weekly basis.}, and discrete-time models circumvent the incomplete knowledge of the viral state of time spans between two measurements.

\item[P3.] The greater the viral state $v_j[k]$ of each neighbour $j$ of a node $i$ at time $k$, the greater is the viral state of node $i$ at time $k+1$. More specifically, the NIMFA model (\ref{NIMFA_disc_}) accounts for the infection from group $j$ to group $i$ by the term 
\begin{align} \label{kjbkjbasss}
w_{i j} (1 - v_i[k]) v_j[k],
\end{align}
which is proportional to both the fraction $(1 - v_i[k])$ of healthy individuals of group $i$ at time $k$ and the fraction $v_j[k]$ of infected individuals of group $j$ at time $k$. The infection rate $w_{i j}$ varies for any pair of groups $i,j$, which takes the heterogeneity of the contact between groups into account (for instance, group $i$ and group $j$ could be two geographical regions that are either adjacent or far apart). The complete infection term of group $i$ in the NIMFA model (\ref{NIMFA_disc_}) follows naturally by linear superposition of the infection terms (\ref{kjbkjbasss}) for every group $j$.

\item[P4.] There is a curing term that opposes the infection of node $i$ by its neighbours. In particular, the curing term $(1-q_i)v_i[k]$ of group $i$ in the NIMFA model (\ref{NIMFA_disc_}) is proportional to the fraction $v_i[k]$ of infected individuals of group $i$. The curing rate $q_i$ varies for different groups $i$, which accounts for heterogeneous capacities of the groups to heal from the virus (for instance, a group may refer to either younger or older individuals).

\item[P5.] There is a unique \cite{fall2007epidemiological, pare2018analysis} non-zero equilibrium $v_\infty$, which corresponds to the endemic state of the virus. Furthermore, if the disease does not die out, then the viral state $v$ approaches the endemic viral state $v_{\infty}$, i.e. $v[k] \rightarrow v_\infty$ for $k \rightarrow \infty$, which we show in this work. To the best of our knowledge, the convergence of the viral state $v(t)$ to the steady-state $v_\infty$ has only been shown \cite{ahn2013global, khanafer2016stability} for the \textit{continuous-time} NIMFA model (\ref{NIMFA_continuous}).

\item[P6.] The viral state is increasing, i.e. $v_i[k] > v_i[k-1]$ for any node $i$ at any time $k$, provided that the initial viral state $v[1]$ is close to zero (almost disease-free), which we show in this work. The viral state $v_i$ of node $i$ typically refers to \textit{cumulative} variables in practical applications, which are increasing and close to zero at the beginning of the outbreak of the disease. For instance \cite{pare2018analysis}, the viral state $v_i[k]$ of node $i$ may refer to the total number of deaths by cholera of group $i$ up to time $k$.

\end{enumerate}
For real-world applications, the usefulness of an epidemic model does not reduce to solely satisfying the properties P1--P6. An epidemic model must additionally be capable of giving answers to questions which are relevant to practical use-cases. In particular, we identify three questions.
 \begin{enumerate}
 \item[Q1.] \textit{How to fit the NIMFA model (\ref{NIMFA_disc_}) to real-world data?} In applications, we do not (exactly) know the infection rate matrix $W$ or the curing rates $q$. In a recent work \cite{prasse2018networkreconstruction}, we derived an efficient method to learn the spreading parameters $W, q$ of the NIMFA model (\ref{NIMFA_disc_}) by observing the viral state $v[k]$. A great advantage for the estimation of the spreading parameters $q, W$ is the linearity of the NIMFA equations (\ref{NIMFA_disc_}) with respect to $q, W$.
  
\item[Q2.] In view of the absence of a closed-form solution of the NIMFA difference equation (\ref{NIMFA_disc_}), \textit{is there an approximate and simpler description of the viral state evolution?} Of particular interest is a worst-case scenario of the viral spread, i.e. an upper bound of the viral state $v_i[k]$ for any node $i$ at any time $k$.

\item[Q3.] \textit{How quickly does the virus spread?} I.e. how fast does the viral state $v[k]$ approach the steady-state $v_\infty$?
\end{enumerate}
In this work, we give answers to the questions Q2 and Q3. In summary, the NIMFA system (\ref{NIMFA_disc_}) is a well-behaved and powerful model, which can be applied to a variety of epidemic phenomena due to the full heterogeneity of the spreading parameters $W, q$. In Section \ref{sec:related_work}, we review related work. The nomenclature and assumptions are introduced in Section \ref{sec:nomenclature} and Section \ref{sec:assumptions}, respectively. In Section \ref{subsec:alternativ_representation}, we analyse the viral state dynamics for large times $k$. We study the monotonicity of the viral state evolution in Section \ref{subsec:monotonicity}. In Section \ref{subsec:bounds_lti}, we derive upper and lower bounds on the viral state dynamics.

\section{Related Work} \label{sec:related_work}
The continuous-time NIMFA model (\ref{NIMFA_continuous}) with homogeneous spreading parameters was originally proposed in \cite{van2009virus}. An extension to more heterogeneous spreading parameters was provided in \cite{mieghem2014homogeneous}, with the constraint that there is a $\beta_i$ for every node $i$ such that for every node $j\neq i$ either $\beta_{ij} = \beta_{i}$ or $\beta_{ij}=0$.

The discrete-time NIMFA model with homogeneous spreading parameters has been studied in \cite{ahn2013global, pare2018analysis, prasse2018networkreconstruction}. The discrete-time NIMFA model (\ref{NIMFA_disc_}) with fully heterogeneous spreading parameters has been proposed by Par\'e \textit{et al.} \cite{pare2018analysis}, who showed that there is either one stable equilibrium, the healthy state $v[k]=0$, or there are two equilibria, the healthy state and a steady-state $v_\infty$ with positive components. Furthermore, the discrete-time NIMFA model (\ref{NIMFA_disc_stacked}) has been validated on data of real-world epidemics \cite{pare2018analysis}. We are not aware of results that assess the stability of the steady-state $v_\infty$ of the discrete-time NIMFA system (\ref{NIMFA_disc_}). 

\section{Nomenclature}\label{sec:nomenclature} 
 For a square matrix $M$, we denote the spectral radius by $\rho(M)$ and the eigenvalue with the largest real part by $\lambda_1(M)$. For an $N \times 1$ vector $z$ we define $z^l = (z^l_1, ..., z^l_N)^T$. For two $N \times 1$ vectors $y, z$, it holds $y > z$ or $y\ge z$ if $y_i > z_i$ or $y_i \ge z_i$, respectively, for every element $i=1, ..., N$.  The minimum of the discrete-time curing rates is denoted by $q_\textrm{min}= \textrm{min}\{q_1, ..., q_N\}$. We define the $N\times N$ matrix $R$ as
\begin{align}\label{definition_R}
R = I - \textup{\textrm{diag}}(q) + W.
\end{align}
The principal eigenvector of the matrix $R$ is denoted by $x_1$ and satisfies\footnote{Lemma \ref{lemma:R_irreducible} in Section \ref{subsec:alternativ_representation} states that there is indeed an eigenvalue $\lambda_1(R)$ of the matrix $R$ which equals the spectral radius $\rho(R)$.} 
\begin{align*}
R x_1 = \lambda_1(R)x_1 = \rho(R) x_1.
\end{align*}
Table \ref{table:nomenclature} summarises the nomenclature.

\begin{center}
  \begin{table*} \caption{Nomenclature \label{table:nomenclature}} 
  \centering
  \begin{tabular}{ | l | l | }
    \hline   
$\beta_{ij}$ & Continuous-time infection rate from group $j$ to group $i$ \\ \hline 
$\delta_i$ & Continuous-time curing rate of group $i$\\ \hline  
 $\textrm{diag}(x)$ & For a vector $x \in \mathbb{R}^N$, $\textrm{diag}(x)$ is the $N \times N$ diagonal matrix with $x$ on its diagonal \\ \hline   
$I$ & The $N \times N$ identity matrix \\ \hline       
  $\lambda_1(M)$& Eigenvalue with the largest real part of a square matrix $M$\\ \hline
 $N$& Number of nodes \\ \hline 
$q$ & Discrete-time curing rate vector, $q= (q_1, ..., q_N)^T$ and $q_i = \delta_i T$\\ \hline  
$q_\textrm{min}$ & Minimal discrete-time curing rate, $q_\textrm{min}= \textrm{min}\{q_1, ..., q_N\}$\\ \hline  
 $R$& $N\times N$ matrix $R = I - \textup{\textrm{diag}}(q) + W $\\ \hline 
$\rho(M)$ & Spectral radius of a square matrix $M$\\ \hline  
    $T$& Sampling time of the discrete-time NIMFA model\\ \hline
 $u$ & All-one vector $u = (1, ..., 1)^T \in \mathbb{R}^N$ \\ \hline     
$v[k]$ & Viral state $v[k] = (v_1[k], ..., v_N[k])^T$ at discrete time $k \in \mathbb{N}$, $v_i[k] \in [0, 1]$ for $i=1, ..., N$ \\ \hline
$v_\infty$ & Steady state vector, the non-zero equilibrium of (\ref{NIMFA_disc_}) \\ \hline
$\Delta v[k]$ & Difference of the viral state to the steady state, $\Delta v[k] = v[k] - v_\infty$ \\ \hline
$W$ & Discrete-time $N \times N$ infection rate matrix, $w_{ij} = \beta_{ij} T$ \\ \hline    
$x_1$ & Principal eigenvector of the matrix $R$, $R x_1 = \rho(R) x_1$ \\ \hline
  \end{tabular}
  \end{table*}
\end{center}

\section{Assumptions}\label{sec:assumptions} 
\begin{assumption} \label{assumption:spreading_parameters}
The curing rates are positive and the infection rates non-negative, i.e. $q_i >0$ and $w_{ij} \ge 0$ for all nodes $i, j$. 
\end{assumption}
\begin{assumption} \label{assumption:sampling_time}
 For every node $i=1, ..., N$, the sampling time $T>0$ satisfies 
\begin{align} \label{kjbnkaaass}
T \le T_\textup{\textrm{max}} = \frac{1}{\delta_i +  \sum^N_{j=1} \beta_{ij}}.
\end{align}  
\end{assumption}
The results of this work which rely on Assumption \ref{assumption:sampling_time} hold true if the sampling time is sufficiently small, which we consider a rather technical assumption. The particular choice of the bound (\ref{kjbnkaaass}) is due to Lemma \ref{lemma:nonnegativity} in Section \ref{subsec:alternativ_representation}. Furthermore, we make the following assumption on the initial viral state $v_i[1]$. 
\begin{assumption}\label{assumption:initial_state}
For every node $i=1, ..., N$, it holds that $0 \le v_i[1] \le v_{\infty, i}$.
\end{assumption}
At the beginning of the outbreak of an infectious disease, every group $i$ of individuals is almost disease-free. Thus, it is a reasonable assumption that the initial viral state $v_i[1]$ of every group $i$ is sufficiently small at the initial time $k=1$ such that that $v_i[1] \le v_{\infty, i}$ holds.
\begin{assumption} \label{assumption:connected_graph}
The infection rate matrix $W$ is irreducible.
\end{assumption}
Assumption \ref{assumption:connected_graph} holds if and only if the infection rate matrix $W$ corresponds to a strongly connected graph\footnote{In a strongly connected graph, there is a path from every node $i$ to any other node $j$.}. Finally, as shown in \cite{pare2018analysis}, Assumption \ref{assumption:above_threshold} avoids the trivial viral dynamics in which the virus dies out.
\begin{assumption} \label{assumption:above_threshold}
The spectral radius of the matrix $R$ is greater than one, i.e. $\rho \left( R \right) > 1$.
\end{assumption}

\section{Viral State Dynamics close to the Steady-State} \label{subsec:alternativ_representation}
For completeness, we recapitulate the results of Par\'e \textit{et al.} \cite{pare2018analysis} on the equilibria and the stability of the healthy state\footnote{
Theorem \ref{theorem:parestability} follows immediately from merging \cite[Theorems 1-2 and Proposition 2]{pare2018analysis}.}.
\begin{theorem}[\cite{pare2018analysis} ]\label{theorem:parestability}
Under the Assumptions \ref{assumption:spreading_parameters}, \ref{assumption:sampling_time} and \ref{assumption:connected_graph}, the following two statements hold true:
\begin{enumerate}
\item If  $\rho \left( R \right) \le 1$, then the healthy state $v[k]=0$ is the only equilibrium of the discrete-time NIMFA model (\ref{NIMFA_disc_stacked}). Furthermore, $v[k] \rightarrow 0$ when $k \rightarrow \infty$ for any initial viral state $v[1]$ with $0 \le v_i[1] \le 1$ for every node $i$. 

\item If $\rho \left( R \right) > 1$, then there are two equilibria of the discrete-time NIMFA model (\ref{NIMFA_disc_stacked}): The healthy state $v[k]=0$ and a steady-state $v_\infty$ with $v_{\infty, i}>0$ for every node $i$.
\end{enumerate}
\end{theorem}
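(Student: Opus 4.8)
My plan is to treat the dynamics as iteration of the map $f(v) = \mathrm{diag}(u-q)v + \mathrm{diag}(u-v)Wv$ on the cube $[0,1]^N$ and to exploit the order structure that Assumption \ref{assumption:sampling_time} creates. First I would record two facts that follow directly from the bound on $T$. Rewriting the sampling-time bound (\ref{kjbnkaaass}) as $q_i + \sum_{j} w_{ij} \le 1$, one checks componentwise that $0 \le f(v) \le u$ whenever $0 \le v \le u$, so the cube is forward invariant. Moreover $\partial f_i/\partial v_j = (1-v_i)w_{ij} \ge 0$ for $j \ne i$, while $\partial f_i/\partial v_i = (1-q_i) + (1-v_i)w_{ii} - (Wv)_i \ge (1-q_i) - \sum_j w_{ij} \ge 0$ on the cube; hence $f$ is monotone (order-preserving) on $[0,1]^N$. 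Separately, since $1-q_i > 0$ and $W \ge 0$ is irreducible, the matrix $R$ is nonnegative and irreducible, so Perron--Frobenius gives $\lambda_1(R)=\rho(R)$ simple with a strictly positive right eigenvector $x_1$ and a strictly positive left eigenvector.

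For the equilibrium count I would use the linear comparison $f(v) \le Rv$, which holds because $f_i(v) = (Rv)_i - v_i(Wv)_i$. Evaluating at a fixed point $v = f(v)$ gives $v \le Rv$, with strict inequality in every coordinate where $v_i (Wv)_i > 0$. If $v$ is a positive equilibrium then, by irreducibility, $Rv \ge v$ with strict inequality in at least one coordinate, which for a nonnegative irreducible matrix forces $\rho(R) > 1$; contrapositively, $\rho(R) \le 1$ admits no positive equilibrium, which proves the ``only the healthy state'' claim of statement~1. The convergence $v[k] \to 0$ I would obtain by iterating the comparison, $v[k] \le R^{k-1} v[1]$: when $\rho(R) < 1$ the right-hand side vanishes while forward invariance keeps $v[k] \ge 0$.

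Conversely, when $\rho(R) > 1$ I would construct the nonzero equilibrium dynamically. Taking $v[1] = \varepsilon x_1$, the identity $f(\varepsilon x_1) = \varepsilon\rho(R) x_1 - \varepsilon^2 \mathrm{diag}(x_1)Wx_1$ shows $f(v[1]) \ge v[1]$ once $\varepsilon$ is small enough that $\rho(R) - \varepsilon (Wx_1)_i \ge 1$ for every $i$, which is possible because $\rho(R) > 1$. Monotonicity of $f$ then propagates $v[k+1] \ge v[k]$ by induction, and forward invariance bounds the increasing sequence above by $u$; by continuity its limit $v_\infty = \lim_k v[k]$ is a fixed point with $v_\infty \ge \varepsilon x_1 > 0$, hence a positive steady state. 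Uniqueness of the positive equilibrium I would derive from strict subhomogeneity: the same algebra yields $f(\lambda v) - \lambda f(v) = \lambda(1-\lambda)\,\mathrm{diag}(v)Wv > 0$ for $\lambda \in (0,1)$ and $v > 0$, and a standard nonlinear Perron--Frobenius (Krause-type) argument then excludes two distinct positive fixed points.

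The part I expect to be most delicate is the global convergence $v[k] \to 0$ at the boundary $\rho(R) = 1$, where the linear bound $v[k] \le R^{k-1}v[1]$ no longer decays. Here I would switch to the Lyapunov function $V(v) = y^T v$, with $y > 0$ the left Perron vector of $R$; using $y^T R = y^T$ one finds $V(v[k+1]) = V(v[k]) - \sum_i y_i v_i[k](Wv[k])_i \le V(v[k])$, so $V$ is nonincreasing and the decrements must tend to zero. Converting this into $v[k] \to 0$ requires a LaSalle-type invariance argument on the bounded trajectory, combined with the irreducibility of $W$ to force the limiting coordinates to vanish, and making that limiting step rigorous is the main obstacle.
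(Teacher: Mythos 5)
Your proposal is necessarily a different route from the paper's, because the paper does not prove Theorem \ref{theorem:parestability} at all: it imports the result from Par\'e \emph{et al.} \cite{pare2018analysis} (see the footnote preceding the theorem). Judged on its own merits, your skeleton is sound and most of the computations check out: forward invariance of $[0,1]^N$ under Assumption \ref{assumption:sampling_time}, order-preservation of the map $f$ (nonnegative Jacobian on the cube), the comparison $f(v)=Rv-\textrm{diag}(v)Wv\le Rv$, the monotone iteration started at $\varepsilon x_1$ to construct a positive fixed point when $\rho(R)>1$, and the subhomogeneity identity $f(\lambda v)-\lambda f(v)=\lambda(1-\lambda)\,\textrm{diag}(v)Wv$ feeding a Krause-type uniqueness argument are all correct.

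Two genuine gaps remain. First, in both statements you only ever reason about \emph{strictly positive} equilibria, whereas the theorem concerns all nonzero equilibria; you must exclude equilibria $v\ge 0$, $v\neq 0$ with some zero components. This is where Assumption \ref{assumption:connected_graph} enters a second time: if $v=f(v)$ and $v_i=0$, then $0=f_i(v)=(Wv)_i=\sum_{j:\,v_j>0}w_{ij}v_j$ forces $w_{ij}=0$ for every $j$ in the support of $v$; applying this to every node $i$ outside the support shows there is no link from the support into its complement, contradicting strong connectivity. Hence every nonzero equilibrium is strictly positive, and your arguments then yield both the emptiness claim in statement 1 and the ``exactly two'' claim in statement 2. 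Second, and more seriously, the convergence $v[k]\to 0$ at the boundary $\rho(R)=1$ is left unfinished --- you say yourself that the LaSalle limiting step is the main obstacle --- so statement 1 is not proved as written. Your LaSalle route can be completed (the largest invariant set inside $\{v:\textrm{diag}(v)Wv=0\}$ is $\{0\}$, by a support-growth argument using irreducibility), but you do not need it: you already have monotonicity, and $f(u)=u-q\le u$ componentwise, so $f^k(u)$ is decreasing, bounded below by $0$, and converges by continuity to an equilibrium, which by the repaired first gap must be $0$ whenever $\rho(R)\le 1$. Since $v[k+1]=f^{k}(v[1])\le f^{k}(u)$ by monotonicity, $v[k]\to 0$ follows uniformly for all $\rho(R)\le 1$, with no case split between $\rho(R)<1$ and $\rho(R)=1$.
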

We remark that the NIMFA model with \textit{homogeneous} spreading parameters \cite{van2009virus, van2011n} assumes that there is a scalar curing rate $\delta$ and a scalar infection rate $\beta$ such that $q_i = \delta$ and $\beta_{ij} = \beta a_{ij}$ for all nodes $i, j$, where $a_{ij}$ denote the elements of a symmetric and irreducible zero-one adjacency matrix $A$. For the NIMFA model with homogeneous spreading parameters, the condition $\rho \left( R \right) \le 1$ simplifies to $\tau \le \tau^{(1)}_c$ with the effective infection rate $\tau = \beta / \delta$ and the epidemic threshold $\tau^{(1)}_c = 1/\lambda_1(A)$. 
 
\begin{lemma}\label{lemma:R_irreducible}
Suppose that Assumptions \ref{assumption:spreading_parameters}, \ref{assumption:sampling_time} and \ref{assumption:connected_graph} hold. Then, the matrix $R$ is irreducible and non-negative. Hence, there is a real eigenvalue $\lambda_1(R)$ of the matrix $R$ which equals the spectral radius $\rho \left( R \right)$, and the principal eigenvector $x_1$ of the matrix $R$ is positive.
\end{lemma}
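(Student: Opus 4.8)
The plan is to establish the two structural properties---non-negativity and irreducibility---separately, and then to invoke the Perron--Frobenius theorem to obtain the spectral conclusions. Non-negativity of the off-diagonal entries is immediate, since for $i \neq j$ the corresponding entry of $R$ equals $w_{ij} \geq 0$ by Assumption \ref{assumption:spreading_parameters}. The only entries whose sign is not obvious are the diagonal entries $1 - q_i + w_{ii}$, and this is exactly where Assumption \ref{assumption:sampling_time} enters. First I would rewrite the sampling-time bound (\ref{kjbnkaaass}) in terms of the discrete-time rates: multiplying by $T$ and using $q_i = \delta_i T$ and $w_{ij} = \beta_{ij} T$ gives $q_i + \sum_{j=1}^N w_{ij} \leq 1$ for every node $i$. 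In particular $q_i + w_{ii} \leq 1$, so that $1 - q_i + w_{ii} \geq 2 w_{ii} \geq 0$, and hence $R$ is non-negative.

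For irreducibility, I would argue that $R$ and $W$ induce the same directed graph up to self-loops. A non-negative square matrix $M$ is irreducible if and only if its associated digraph---with an arc from $j$ to $i$ whenever $M_{ij} > 0$---is strongly connected, and the presence or absence of self-loops (i.e.\ of diagonal entries) does not affect strong connectivity. Since $R = I - \textup{\textrm{diag}}(q) + W$ differs from $W$ only on the diagonal, the off-diagonal zero/non-zero pattern of $R$ coincides with that of $W$; therefore the digraph of $R$ is strongly connected precisely when that of $W$ is. By Assumption \ref{assumption:connected_graph}, $W$ is irreducible, so $R$ is irreducible as well.

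Finally, with $R$ non-negative and irreducible, I would invoke the Perron--Frobenius theorem, which guarantees that the spectral radius $\rho(R)$ is itself an eigenvalue of $R$, that this eigenvalue is simple, and that the associated eigenvector $x_1$ may be chosen with strictly positive entries. Identifying $\lambda_1(R) = \rho(R)$ then yields every claim of the lemma.

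I expect the only genuine subtlety to be the diagonal-entry bound: it is the sole place where the specific form of $T_{\textup{\textrm{max}}}$ in Assumption \ref{assumption:sampling_time} is used, and it is precisely the step that would fail if the sampling time were taken too large. The irreducibility argument and the appeal to Perron--Frobenius are otherwise entirely standard.
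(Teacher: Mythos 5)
Your proposal is correct and follows essentially the same route as the paper's proof: entrywise non-negativity (off-diagonals from Assumption \ref{assumption:spreading_parameters}, diagonals from Assumption \ref{assumption:sampling_time}, which you equivalently recast as $q_i + \sum_{j=1}^N w_{ij} \le 1$ while the paper bounds $R_{ii} \ge 1 - \delta_i T \ge 0$ directly), irreducibility inherited from $W$ since $R$ and $W$ differ only on the diagonal, and then the Perron--Frobenius theorem for the spectral conclusions. Your explicit remark that self-loops do not affect strong connectivity is a slightly more detailed justification of the irreducibility step than the paper gives, but it is the same argument.
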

\begin{proof}
Appendix A. 
\end{proof}
We can generalise the bounds \cite{van2009virus,mieghem2014homogeneous} for the steady-state vector $v_\infty$ to the NIMFA model (\ref{NIMFA_disc_}) with heterogeneous spreading parameters.
\begin{lemma}\label{lemma:v_infty_bound}
Suppose that Assumptions \ref{assumption:spreading_parameters}, \ref{assumption:sampling_time}, \ref{assumption:connected_graph} and \ref{assumption:above_threshold} hold. Then, the steady-state $v_{\infty, i}$ of any node $i$ is bounded by
\begin{align*} 
1 - \frac{q_i}{\sum^N_{j=1} w_{ij}}  \le v_{\infty, i} \le 1 - \frac{q_i}{ q_i + \sum^N_{j=1} w_{ij} }.
\end{align*}
\end{lemma}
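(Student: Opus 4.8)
The plan is to read both inequalities directly off the equilibrium relation (\ref{steady_state_componentwise}). As a preliminary step I would record that, by Theorem \ref{theorem:parestability}, $v_{\infty,i}>0$ for every node $i$, and that $v_{\infty,i}<1$ is forced: if $v_{\infty,i}=1$, then the left-hand side $q_i v_{\infty,i}/(1-v_{\infty,i})$ of (\ref{steady_state_componentwise}) would be infinite while the right-hand side $\sum_{j=1}^N w_{ij}v_{\infty,j}$ stays finite. Abbreviating the neighbour sum by $f_i=\sum_{j=1}^N w_{ij}v_{\infty,j}$ and the row sum by $s_i=\sum_{j=1}^N w_{ij}$, I would solve (\ref{steady_state_componentwise}) for $v_{\infty,i}$ to obtain
\begin{align*}
v_{\infty,i}=\frac{f_i}{q_i+f_i}.
\end{align*}
Since $f\mapsto f/(q_i+f)$ is strictly increasing on $[0,\infty)$, both bounds on $v_{\infty,i}$ become equivalent to two-sided bounds on the single quantity $f_i$, so the whole argument reduces to controlling $f_i$.

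For the upper bound I would simply invoke $v_{\infty,j}\le 1$ for every $j$, which gives $f_i\le\sum_{j=1}^N w_{ij}=s_i$; monotonicity of the map above then yields $v_{\infty,i}\le s_i/(q_i+s_i)=1-q_i/(q_i+s_i)$, which is exactly the claimed upper bound. This direction is routine and uses nothing beyond $0<v_{\infty,j}<1$.

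For the lower bound, the same monotone substitution shows that $v_{\infty,i}\ge 1-q_i/s_i$ is equivalent to $f_i\ge s_i-q_i$, that is, to $\sum_{j=1}^N w_{ij}\big(v_{\infty,j}-v_{\infty,i}\big)\ge 0$: node $i$ must not exceed the $w_{ij}$-weighted average of its in-neighbours. The clean way to manufacture a lower bound on $f_i$ is to replace every neighbour value by the global minimum $v_{\infty,\textrm{min}}=\min_k v_{\infty,k}$, giving $f_i\ge s_i\, v_{\infty,\textrm{min}}$. Evaluating at an index $m$ that attains the minimum, the identity $v_{\infty,m}=f_m/(q_m+f_m)$ together with $f_m\ge s_m v_{\infty,m}$ collapses to $v_{\infty,m}\ge 1-q_m/s_m$, so the lower bound holds at the least-infected node.

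I expect the main obstacle to be promoting this estimate from the least-infected node to an arbitrary node $i$. For a general node the in-neighbours need not all dominate $v_{\infty,i}$, so the inequality $f_i\ge s_i v_{\infty,i}$ can fail, and the per-node bound does not follow from (\ref{steady_state_componentwise}) in isolation; indeed the substitution that works at the minimum is exactly the one that breaks down at a strongly heterogeneous, high-in-degree node. To bridge this gap I would bring in the additional monotone structure of the dynamics analysed later in the paper — in particular that, under Assumption \ref{assumption:initial_state}, the viral state increases towards $v_\infty$ from below — and try to transfer a node-wise lower bound from the iterates to the fixed point. Pinning down the precise sense in which the per-node inequality holds, rather than only the statement for the minimal component, is the step I would scrutinise most carefully.
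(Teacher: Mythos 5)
Up to the point where you stop, your argument is exactly the paper's proof (Appendix B): the paper also solves (\ref{steady_state_componentwise}) to get $v_{\infty,i} = 1 - q_i/\bigl(q_i + f_i\bigr)$ with $f_i = \sum_{j=1}^N w_{ij} v_{\infty,j}$, obtains the upper bound from $v_{\infty,j}\le 1$, and obtains the lower bound by evaluating at a node $m$ attaining the minimum $v_{\infty,\textrm{min}}$, where $f_m \ge v_{\infty,\textrm{min}} \sum_{j=1}^N w_{mj}$ is legitimate. So on the upper bound and on the lower bound at the least-infected node, you and the paper coincide, and both arguments are correct.

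The obstacle you flag in your final paragraph, however, is not a loose end you could tighten: it is a genuine gap, and it cannot be closed, because the per-node lower bound is false under the stated assumptions. Take $N=2$, $w_{11}=w_{22}=0$, $w_{12}=0.5$, $w_{21}=0.9$, $q_1=0.5$, $q_2=0.1$. Then $q_i+\sum_{j}w_{ij}=1$ for both rows, which is exactly Assumption \ref{assumption:sampling_time} (with $T=T_\textup{\textrm{max}}$; scaling all rates by a common factor below one makes the inequality strict without changing $v_\infty$), the matrix $W$ is irreducible, and $R$ has eigenvalues $0$ and $1.4$, so $\rho(R)=1.4>1$. The unique non-zero equilibrium guaranteed by Theorem \ref{theorem:parestability} is $v_\infty=(4/9,\,4/5)^T$: indeed $w_{12}v_{\infty,2}=0.4=q_1 v_{\infty,1}/(1-v_{\infty,1})$ and $w_{21}v_{\infty,1}=0.4=q_2 v_{\infty,2}/(1-v_{\infty,2})$. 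Yet the claimed lower bound at node $2$ reads $v_{\infty,2}\ge 1-q_2/w_{21}=8/9$, which fails since $4/5<8/9$. Node $2$ is precisely your \emph{strongly heterogeneous, high-in-degree node}: its only in-neighbour has a much smaller steady-state value, so $f_2 < v_{\infty,2}\sum_j w_{2j}$. Consequently, your proposed bridge via the monotone dynamics of Assumption \ref{assumption:initial_state} cannot succeed — no argument can, since the target inequality is false at the fixed point itself. What is true, and what both your argument and the paper's actually establish, is the bound at the minimising node, which yields $v_{\infty,i}\ge v_{\infty,\textrm{min}}\ge 1-q_m/\sum_{j=1}^N w_{mj}\ge \min_{l}\bigl(1-q_l/\sum_{j=1}^N w_{lj}\bigr)$ for every node $i$. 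Be aware that the paper's own proof contains exactly the gap you identified: it proves the displayed lower bound only for the node attaining $v_{\infty,\textrm{min}}$ and then states it for every node $i$; your instinct to scrutinise that promotion step was correct, and the honest resolution is to weaken the lemma, not to strengthen the proof.
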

\begin{proof}
Appendix B.
\end{proof}

 We denote the difference of the viral state $v[k]$ to the steady state $v_\infty$ by $\Delta v[k] = v[k] - v_\infty$. By considering the difference $\Delta v[k]= v[k] - v_\infty$, we obtain an equivalent representation\footnote{Proposition \ref{proposition:delta_v_equation} is a generalisation of \cite[Proposition 3]{prasse2018networkreconstruction} to the NIMFA model with heterogeneous spreading parameters $q, W$.} of the discrete-time NIMFA equations (\ref{NIMFA_disc_}).
\begin{proposition}[NIMFA Equations as Difference to the Steady-State]\label{proposition:delta_v_equation}
Suppose that Assumptions \ref{assumption:spreading_parameters}, \ref{assumption:sampling_time}, \ref{assumption:connected_graph} and \ref{assumption:above_threshold} hold. Then, the difference $\Delta v[k]= v[k] - v_\infty$ from the viral state $v[k]$ to the steady state $v_\infty$ of the discrete-time NIMFA model (\ref{NIMFA_disc_stacked}) evolves according to
\begin{align} \label{delta_v_system}
\Delta v[k+1] & = F \Delta v[k]-  \textup{\textrm{diag}}(\Delta v[k] )  W  \Delta v[k],   
\end{align}
where the $N\times N$ matrix $F$ is given by
\begin{equation} \label{matrix_F}
F = I + \textup{\textrm{diag}}\left( \frac{q_1}{ v_{\infty, 1} - 1}, ...,  \frac{q_N}{ v_{\infty, N} - 1}\right) +  \textup{\textrm{diag}}(u- v_{\infty} ) W.
\end{equation}
\end{proposition}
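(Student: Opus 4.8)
The plan is to derive (\ref{delta_v_system}) by a direct substitution $v[k] = v_\infty + \Delta v[k]$ into the NIMFA equations (\ref{NIMFA_disc_stacked}) and subtracting off the fixed-point relation that $v_\infty$ satisfies. First I would record that, since $v_\infty$ is an equilibrium of (\ref{NIMFA_disc_stacked}), it satisfies $v_\infty = \textup{\textrm{diag}}(u-q) v_\infty + \textup{\textrm{diag}}(u - v_\infty) W v_\infty$. Subtracting this identity from (\ref{NIMFA_disc_stacked}) and writing $\Delta v[k] = v[k] - v_\infty$ gives
\[
\Delta v[k+1] = \textup{\textrm{diag}}(u-q)\,\Delta v[k] + \textup{\textrm{diag}}(u - v[k]) W v[k] - \textup{\textrm{diag}}(u - v_\infty) W v_\infty.
\]
The only structural work left is to expand the two infection terms and reorganise the result.

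Next I would expand the infection part. Using $u - v[k] = (u - v_\infty) - \Delta v[k]$ together with $v[k] = v_\infty + \Delta v[k]$, multiplying out and cancelling the zeroth-order term $\textup{\textrm{diag}}(u-v_\infty)W v_\infty$ yields
\[
\textup{\textrm{diag}}(u - v[k]) W v[k] - \textup{\textrm{diag}}(u - v_\infty) W v_\infty = \textup{\textrm{diag}}(u - v_\infty) W \Delta v[k] - \textup{\textrm{diag}}(\Delta v[k]) W v_\infty - \textup{\textrm{diag}}(\Delta v[k]) W \Delta v[k].
\]
The quadratic term $-\textup{\textrm{diag}}(\Delta v[k]) W \Delta v[k]$ is already the desired second term of (\ref{delta_v_system}), so it remains to collect the three linear contributions $\textup{\textrm{diag}}(u-q)\Delta v[k]$, $\textup{\textrm{diag}}(u-v_\infty) W \Delta v[k]$, and $-\textup{\textrm{diag}}(\Delta v[k]) W v_\infty$ into $F\,\Delta v[k]$.

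The single step that uses the steady-state structure rather than pure bookkeeping is the rewriting of $-\textup{\textrm{diag}}(\Delta v[k]) W v_\infty$. Here I would invoke the componentwise equilibrium equation (\ref{steady_state_componentwise}), which gives $(W v_\infty)_i = q_i v_{\infty,i}/(1 - v_{\infty,i})$; this is well defined because Lemma \ref{lemma:v_infty_bound} guarantees $v_{\infty,i} < 1$. Writing $\textup{\textrm{diag}}(u-q)\Delta v[k] = \Delta v[k] - \textup{\textrm{diag}}(q)\Delta v[k]$ and combining the $i$-th curing contribution $-q_i$ with the $i$-th component of $-\textup{\textrm{diag}}(\Delta v[k]) W v_\infty$ produces the coefficient
\[
-q_i - q_i \frac{v_{\infty,i}}{1 - v_{\infty,i}} = -\frac{q_i}{1 - v_{\infty,i}} = \frac{q_i}{v_{\infty,i} - 1},
\]
using the identity $1 + v_{\infty,i}/(1-v_{\infty,i}) = 1/(1-v_{\infty,i})$. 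Together with the leftover identity term $I$ and the term $\textup{\textrm{diag}}(u-v_\infty) W \Delta v[k]$, the linear part becomes exactly $F\,\Delta v[k]$ with $F$ as in (\ref{matrix_F}), which finishes the proof.

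I expect no genuine obstacle, since the statement is an exact algebraic rearrangement with no inequalities or limiting arguments involved. The only non-mechanical ingredient is substituting the scalar steady-state relation (\ref{steady_state_componentwise}) to convert $W v_\infty$ into a diagonal curing-type term; the points to watch are the sign conventions and the well-definedness of the division by $1 - v_{\infty,i}$, which rests on the strict upper bound $v_{\infty,i}<1$ supplied by Lemma \ref{lemma:v_infty_bound}.
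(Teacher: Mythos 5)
Your proposal is correct and follows essentially the same route as the paper's proof: subtract the fixed-point relation from the NIMFA equation, expand the bilinear infection term in powers of $\Delta v[k]$, and use the steady-state equation (\ref{steady_state_componentwise}) to turn $W v_\infty$ into the diagonal term $\textup{\textrm{diag}}\bigl(q_i/(v_{\infty,i}-1)\bigr)$. The only difference is presentational — you work in vector/matrix form while the paper carries out the identical bookkeeping componentwise — and your added remark on well-definedness of the division by $1-v_{\infty,i}$ via Lemma \ref{lemma:v_infty_bound} is a harmless extra.
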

\begin{proof}
Appendix C.
\end{proof} 
For a sufficiently small sampling time $T$, Lemma \ref{lemma:nonnegativity} states that every element of matrix $F$ is non-negative.
\begin{lemma} \label{lemma:nonnegativity}
Suppose that Assumptions \ref{assumption:spreading_parameters}, \ref{assumption:sampling_time}, \ref{assumption:connected_graph} and \ref{assumption:above_threshold} hold. Then, the $N \times N$ matrix $F$ defined by (\ref{matrix_F}) is non-negative, i.e. $(F)_{ij} \ge 0$ for every $i, j = 1, ..., N$.
\end{lemma}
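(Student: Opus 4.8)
The plan is to verify non-negativity of $F$ entry by entry, separating the off-diagonal entries (immediate) from the diagonal entries, where the whole difficulty lies: the middle summand $q_i/(v_{\infty,i}-1)$ in the definition (\ref{matrix_F}) of $F$ is negative.

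First I would treat the off-diagonal entries. For $i \neq j$, neither the identity $I$ nor the diagonal matrix $\textup{\textrm{diag}}(q_1/(v_{\infty,1}-1), \ldots, q_N/(v_{\infty,N}-1))$ contributes off the diagonal, so only the term $\textup{\textrm{diag}}(u - v_\infty) W$ matters and $(F)_{ij} = (1 - v_{\infty,i}) w_{ij}$. By Lemma \ref{lemma:v_infty_bound} the steady state obeys $v_{\infty,i} \le 1 - q_i/(q_i + \sum_{j=1}^N w_{ij}) < 1$, hence $1 - v_{\infty,i} > 0$; together with $w_{ij} \ge 0$ from Assumption \ref{assumption:spreading_parameters} this gives $(F)_{ij} \ge 0$.

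Second, the diagonal entries read $(F)_{ii} = 1 + q_i/(v_{\infty,i}-1) + (1 - v_{\infty,i}) w_{ii}$, in which the middle summand is negative. The key step is to absorb it using the steady-state relation (\ref{steady_state_componentwise}). Writing the algebraic identity $q_i/(1-v_{\infty,i}) = q_i + q_i v_{\infty,i}/(1-v_{\infty,i})$ (valid since $v_{\infty,i} < 1$) and invoking (\ref{steady_state_componentwise}) to replace $q_i v_{\infty,i}/(1-v_{\infty,i})$ by $\sum_{j=1}^N w_{ij} v_{\infty,j}$, I obtain $q_i/(1 - v_{\infty,i}) = q_i + \sum_{j=1}^N w_{ij} v_{\infty,j}$, and therefore $(F)_{ii} = 1 - q_i - \sum_{j=1}^N w_{ij} v_{\infty,j} + (1 - v_{\infty,i}) w_{ii}$.

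Finally I would bound this from below. Dropping the nonnegative term $(1 - v_{\infty,i}) w_{ii}$ and using $v_{\infty,j} \le 1$ to estimate $\sum_{j=1}^N w_{ij} v_{\infty,j} \le \sum_{j=1}^N w_{ij}$ yields $(F)_{ii} \ge 1 - q_i - \sum_{j=1}^N w_{ij}$. Here Assumption \ref{assumption:sampling_time} becomes decisive: since $q_i = \delta_i T$ and $w_{ij} = \beta_{ij} T$, the sampling-time bound (\ref{kjbnkaaass}) is exactly equivalent to $q_i + \sum_{j=1}^N w_{ij} = T(\delta_i + \sum_{j=1}^N \beta_{ij}) \le 1$, so that $(F)_{ii} \ge 0$. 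Combining the two cases shows every entry of $F$ is non-negative. The only genuinely delicate point is the diagonal, where the negative contribution must first be rewritten through the steady-state identity and then controlled by the small-sampling-time Assumption \ref{assumption:sampling_time}; everything else is sign-checking using $0 \le v_{\infty,i} < 1$ and $w_{ij} \ge 0$.
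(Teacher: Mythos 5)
Your proof is correct and takes essentially the same route as the paper: both treat the off-diagonal entries by sign-checking, and both reduce the diagonal entries to the condition $q_i + \sum_{j=1}^N w_{ij} \le 1$, which is exactly Assumption \ref{assumption:sampling_time}. The only cosmetic difference is that you invoke the steady-state equation (\ref{steady_state_componentwise}) directly to rewrite $q_i/(1-v_{\infty,i}) = q_i + \sum_{j=1}^N w_{ij} v_{\infty,j}$, whereas the paper cites the upper bound of Lemma \ref{lemma:v_infty_bound}, whose own proof rests on precisely the same identity, so the two arguments are mathematically identical.
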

\begin{proof}
Appendix D.
\end{proof}
Furthermore, Proposition \ref{proposition:delta_v_equation} leads to the following corollary\footnote{Corollary \ref{corollary:below_steady_state} is a generalisation of \cite[Corollary 1]{prasse2018networkreconstruction} to the NIMFA model with heterogeneous spreading parameters $q, W$.}.
\begin{corollary}\label{corollary:below_steady_state}
Suppose that Assumption \ref{assumption:spreading_parameters}--\ref{assumption:above_threshold} hold. Then, it holds that $v_i[k] \le v_{\infty, i}$ for every node $i$ at every time $k\ge 1$.
\end{corollary}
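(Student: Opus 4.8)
The plan is to recast the claim in the $\Delta v$-coordinates and prove it by induction on the time slot $k$. Since $\Delta v[k] = v[k] - v_\infty$, the desired inequality $v_i[k] \le v_{\infty, i}$ for every node $i$ is equivalent to $\Delta v[k] \le 0$ (componentwise). The base case $k=1$ is immediate from Assumption \ref{assumption:initial_state}, which gives $0 \le v_i[1] \le v_{\infty,i}$ and hence $\Delta v[1] = v[1] - v_\infty \le 0$. The whole argument therefore reduces to showing that the set $\{z : z \le 0\}$ is forward-invariant under the dynamics (\ref{delta_v_system}) of Proposition \ref{proposition:delta_v_equation}.

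For the inductive step I would assume $\Delta v[k] \le 0$ and read off the $i$-th component of the difference recursion (\ref{delta_v_system}),
\begin{align*}
\Delta v_i[k+1] = \sum_{j=1}^N (F)_{ij}\, \Delta v_j[k] \;-\; \Delta v_i[k] \sum_{j=1}^N w_{ij}\, \Delta v_j[k],
\end{align*}
and bound the two terms separately by a sign analysis. The first term is a product of the non-negative matrix $F$ (Lemma \ref{lemma:nonnegativity}) with the non-positive vector $\Delta v[k]$, so $(F\Delta v[k])_i = \sum_j (F)_{ij}\Delta v_j[k] \le 0$. For the bilinear term, the inner sum $\sum_j w_{ij}\Delta v_j[k]$ is non-positive because $w_{ij}\ge 0$ (Assumption \ref{assumption:spreading_parameters}) and $\Delta v_j[k]\le 0$; multiplying by the non-positive scalar $\Delta v_i[k]$ yields a non-negative quantity, whose negation is therefore non-positive. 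Adding the two non-positive contributions gives $\Delta v_i[k+1]\le 0$, and since $i$ was arbitrary, $\Delta v[k+1]\le 0$, closing the induction.

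In this corollary there is essentially no hard step left: the two structural facts that do the work, namely the equivalent representation (\ref{delta_v_system}) and the non-negativity of $F$, have already been established in Proposition \ref{proposition:delta_v_equation} and Lemma \ref{lemma:nonnegativity}. The only point requiring a moment of care is the sign of the quadratic correction term $\textup{\textrm{diag}}(\Delta v[k])W\Delta v[k]$: one must notice that it is precisely the componentwise product of two non-positive factors, so that subtracting it cannot destroy non-positivity. I would flag that sign bookkeeping as the crux, modest as it is, and note that the argument uses the full Assumptions \ref{assumption:spreading_parameters}--\ref{assumption:above_threshold} only indirectly, through the hypotheses needed to invoke the two prior results together with Assumption \ref{assumption:initial_state} for the base case.
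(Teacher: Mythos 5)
Your proof is correct, and it shares the paper's overall skeleton: an induction showing that the non-positive orthant in the $\Delta v$ coordinates is forward-invariant under the dynamics, with the base case supplied by Assumption \ref{assumption:initial_state}. The difference lies in the decomposition used for the sign analysis. The paper does not work from the stated equation (\ref{delta_v_system}) but from an intermediate equation inside the proof of Proposition \ref{proposition:delta_v_equation}, grouping the quadratic term together with the diagonal linear term: it writes $\Delta v_i[k+1] = g_i[k] + h_i[k]\,\Delta v_i[k]$ with $g_i[k] = (1-v_{\infty,i})\sum_{j=1}^N w_{ij}\Delta v_j[k]$ and $h_i[k] = 1 - q_i - \sum_{j=1}^N w_{ij} v_j[k]$, and then shows $g_i[k]\le 0$ and $h_i[k]\ge 0$, the latter following directly from Assumption \ref{assumption:sampling_time} together with $v_j[k]\in[0,1]$ --- Lemma \ref{lemma:nonnegativity} is never invoked there. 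You instead keep the stated form (\ref{delta_v_system}), bound the linear part $F\Delta v[k]$ via the entrywise non-negativity of $F$ from Lemma \ref{lemma:nonnegativity}, and dispose of the bilinear part $-\textup{\textrm{diag}}(\Delta v[k])\,W\,\Delta v[k]$ as minus a product of two non-positive factors. Both arguments are complete and rest on the same assumptions in the end; your version is arguably cleaner expositionally, since it cites only stated results rather than an equation internal to another proof, while the paper's grouping buys independence from Lemma \ref{lemma:nonnegativity} by extracting the needed sign of the state-dependent coefficient $h_i[k]$ straight from Assumption \ref{assumption:sampling_time}.
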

\begin{proof}
Appendix E.
\end{proof} 
In other words, Corollary \ref{corollary:below_steady_state} states that the set $\mathcal{V} = \{v | 0\le v_i \le v_{\infty, i}, ~\forall i = 1, ..., N\}$ is a \textit{positive invariant set} \cite{khalil1996nonlinear} of the NIMFA model (\ref{NIMFA_disc_}), i.e. if the initial viral state $v[1]$ is element of the set $\mathcal{V}$, then the viral state $v[k]$ will remain in the set $\mathcal{V}$ for $k \ge 1$. We emphasise that Corollary \ref{corollary:below_steady_state} does not imply that the viral state $v[k]$ increases monotonically.

To provide a graphical illustration of Corollary \ref{corollary:below_steady_state}, we generate a random network with $N=10$ nodes by creating a directed link $a_{ij} = 1$ from any node $j$ to any node $i$ with probability $0.25$, and we repeat this network generation if the resulting network is not strongly connected. If $a_{ij} = 1$, then we set the infection rate $w_{ij}$ to a uniformly distributed random number in $[0,1]$, and, if $a_{ij}=0$, then we set $w_{ij}=0$. The curing rate $q_i$ for every node $i$ is set to a uniformly distributed random number in $[0.95 c,1.05c]$, where $c = 10$ is a constant. If the spectral radius $\rho(R)\le 1+10^{-3}$, then we set the constant $c$ to $c/1.1$ and generate new curing rates $q$, and we repeat this generation of curing rates $q$ until $\rho(R)> 1+10^{-3}$. The sampling time $T$ is set to $T = T_\textup{\textrm{max}}/10$, given by (\ref{kjbnkaaass}). For every node $i$, the initial viral state $v_i[1]$ is set to a uniformly distributed random number in $[0,0.01v_{\infty, i}]$. Figure \ref{fig_not_monotonuous} depicts the resulting viral state traces $v_i[k]$ for every node $i$. As stated by Corollary \ref{corollary:below_steady_state}, the viral state $v[k]$ approaches the steady state $v_{\infty}$ from below without overshooting, but the viral state $v[k]$ is not strictly increasing. The absence of an overshoot is not evident, for instance in a Markovian SIS process an overshoot is possible \cite{van2016universality}.
\begin{figure}[h!]
	\centering
	 \includegraphics{./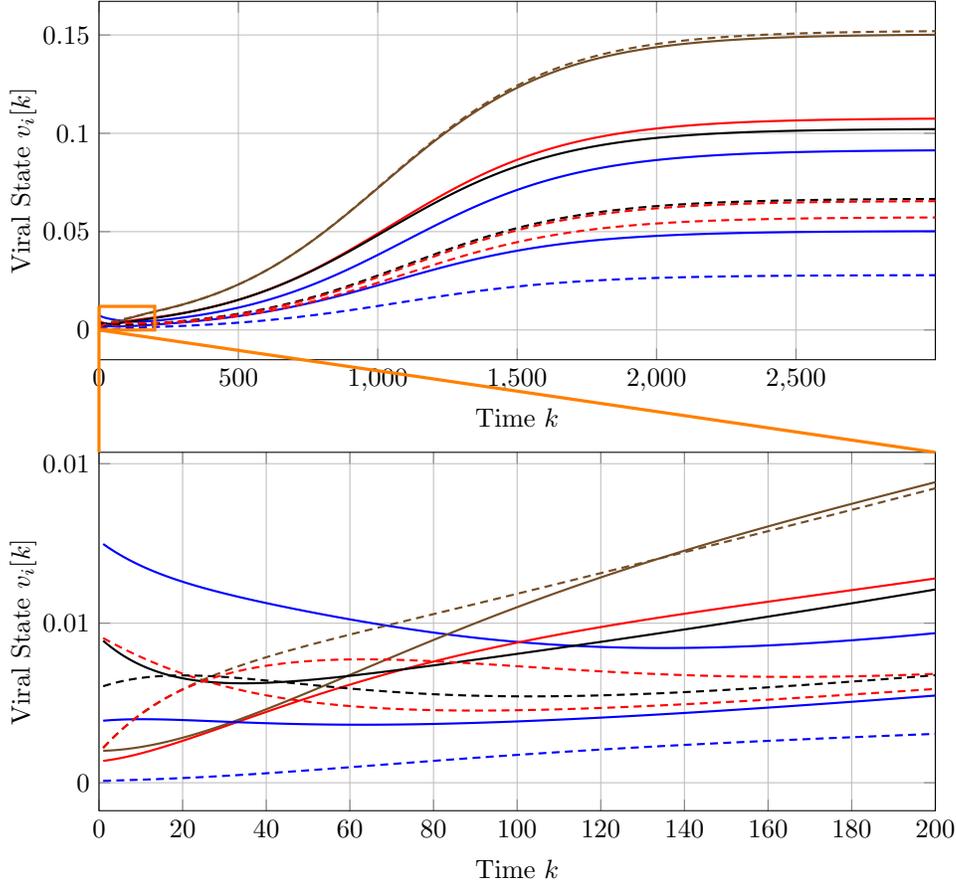}
	{\footnotesize\caption{The viral state traces $v_i[k]$, $i=1, ..., N$, for a directed network with $N=10$ nodes and heterogeneous spreading parameters $q, W$.
The lower sub-plot shows that the viral state $v_i[k]$ is not strictly increasing for every node $i$. \label{fig_not_monotonuous}}}
\end{figure}

For applications in which the initial viral state $v[1]$ is close to zero, the NIMFA equations (\ref{eq:LTI_sys_NIMFA_zero}) can be replaced by linear time-invariant (LTI) systems in two different regimes: On the one hand, it holds for small times $k$ that $v[k] \approx 0$. Hence, the representation (\ref{NIMFA_disc_stacked}) can be linearised around the origin $v[k] = 0$, which yields
\begin{align}\label{eq:LTI_sys_NIMFA_zero}
v [k + 1] \approx R v[k],
\end{align}
for small times $k$. On the other hand, if the viral state $v[k]$ is close to the steady-state $v_\infty$, which implies $\Delta v[k] \approx 0$, the representation (\ref{delta_v_system}) can be linearised around the origin $\Delta v[k] = 0$, which gives
\begin{align} \label{eq:LTI_sys_NIMFA_ss}
\Delta v[k+1] \approx F \Delta v[k].
\end{align}
Furthermore, we obtain that the steady-state $v_\infty$ is asymptotically stable\footnote{The steady-state $v_\infty$ is asymptotically stable if there exists an $\epsilon>0$ such that $\lVert v[1] - v_\infty \rVert <\epsilon$ implies that $v[k] \rightarrow v_\infty$ when $k\rightarrow \infty$.}. 
\begin{theorem}[Asymptotic Stability of the Steady-State] \label{theorem:stability}
Under the Assumptions \ref{assumption:spreading_parameters}, \ref{assumption:sampling_time}, \ref{assumption:connected_graph} and \ref{assumption:above_threshold}, the steady-state $v_\infty$ of the discrete-time NIMFA system (\ref{NIMFA_disc_stacked}) is asymptotically stable.
\end{theorem}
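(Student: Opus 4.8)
The plan is to apply Lyapunov's indirect method (the discrete-time linearisation principle) to the equivalent representation of the dynamics furnished by Proposition \ref{proposition:delta_v_equation}. Writing $f(\Delta v) = F\Delta v - \textup{\textrm{diag}}(\Delta v)W\Delta v$, the difference variable obeys $\Delta v[k+1] = f(\Delta v[k])$, and the steady-state $v_\infty$ corresponds precisely to the equilibrium $\Delta v = 0$. The map $f$ is smooth with $f(0)=0$, and since the term $\textup{\textrm{diag}}(\Delta v)W\Delta v$ is quadratic in $\Delta v$ --- its $i$-th component equals $\Delta v_i \sum_j w_{ij}\Delta v_j$ --- its Jacobian vanishes at the origin. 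Hence the Jacobian of $f$ at $\Delta v = 0$ is exactly $F$, which matches the linearisation (\ref{eq:LTI_sys_NIMFA_ss}). By the standard discrete-time stability theorem, the equilibrium $\Delta v = 0$ is locally asymptotically stable provided the spectral radius satisfies $\rho(F) < 1$.

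The heart of the proof is therefore to establish $\rho(F) < 1$. By Lemma \ref{lemma:nonnegativity} the matrix $F$ is non-negative, so I would invoke a Collatz--Wielandt / Perron--Frobenius argument: if there exists a strictly positive vector $z$ with $Fz < z$ componentwise, then setting $\alpha := \max_i (Fz)_i/z_i < 1$ gives $Fz \le \alpha z$, and iterating with $F\ge 0$ yields $F^n z \le \alpha^n z$, whence $\rho(F) \le \alpha < 1$ (no irreducibility is needed for this direction). The natural candidate is $z = v_\infty$, which is positive by Theorem \ref{theorem:parestability} and satisfies $v_{\infty,i} < 1$ by Lemma \ref{lemma:v_infty_bound}, so that $1 - v_{\infty,i} > 0$.

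The remaining step is the algebraic verification that $F v_\infty < v_\infty$. I would compute $F v_\infty$ componentwise from the definition (\ref{matrix_F}) and substitute the steady-state identity (\ref{steady_state_componentwise}), namely $\sum_j w_{ij} v_{\infty,j} = q_i v_{\infty,i}/(1 - v_{\infty,i})$. I expect the identity term, the negative diagonal term $q_i v_{\infty,i}/(v_{\infty,i}-1)$, and the infection term $(1-v_{\infty,i})\sum_j w_{ij}v_{\infty,j}$ to collapse into
\begin{align*}
(F v_\infty)_i = v_{\infty,i} - \frac{q_i v_{\infty,i}^2}{1 - v_{\infty,i}}.
\end{align*}
Since $q_i > 0$, $v_{\infty,i} > 0$ and $1 - v_{\infty,i} > 0$, the correction term is strictly positive in every coordinate, so $F v_\infty < v_\infty$ holds strictly, and the bound above delivers $\rho(F) < 1$.

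I expect the only genuine obstacle to be this algebraic cancellation in $F v_\infty$; everything else is a routine combination of the non-negativity of $F$ (Lemma \ref{lemma:nonnegativity}) with the Collatz--Wielandt bound and the linearisation principle. A secondary point to check is that the discrete-time linearisation theorem applies verbatim, which it does because $f$ is continuously differentiable on a neighbourhood of the equilibrium and its derivative there equals $F$ with $\rho(F)<1$.
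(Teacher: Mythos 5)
Your proof is correct and takes essentially the same route as the paper's: both reduce the theorem to showing $\rho(F)<1$ via the linearisation principle, and both establish this by testing the non-negative matrix $F$ (Lemma \ref{lemma:nonnegativity}) against the positive vector $v_\infty$ and collapsing $F v_\infty$ with the steady-state equation (\ref{steady_state_componentwise}), yielding the same strict componentwise decrease $\left(Fv_\infty\right)_i = v_{\infty,i} - q_i v_{\infty,i}^2/(1-v_{\infty,i}) < v_{\infty,i}$. The only cosmetic difference is that the paper quotes the bound $\rho(F) \le \max_i \frac{1}{y_i}\sum_j F_{ij} y_j$ for positive $y$ directly from Horn and Johnson, whereas you re-derive it via the Collatz--Wielandt iteration $F^n z \le \alpha^n z$.
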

\begin{proof}
Appendix F.
\end{proof}

Ahn \textit{et al.} \cite{ahn2013global} gave a counterexample for which the steady-state $v_\infty$ of the discrete-time NIMFA system (\ref{NIMFA_disc_stacked}) is unstable. However, their counterexample does not satisfy Assumption \ref{assumption:sampling_time}. Hence, a sufficiently small sampling time $T$ is decisive for the stability of the discrete-time NIMFA model (\ref{NIMFA_disc_stacked}). (Par\'{e} \textit{et al.} \cite{pare2018analysis} observed that the counterexample in \cite{ahn2013global} violates the third assumption in \cite{pare2018analysis}, which is closely related to Assumption \ref{assumption:sampling_time}.)

\section{Monotonicity of the Viral State Dynamics}\label{subsec:monotonicity}

As stated by the property P6 in Section \ref{sec:introduction}, we will show that the viral state $v[k]$ is increasing, provided that the initial viral state $v[1]$ is small.
\begin{definition}[Strictly Increasing Viral State Evolution]
The viral state $v[k]$ is \emph{strictly increasing at time $k$} if $v[k+1] > v[k]$. The viral state $v[k]$ is \emph{globally strictly increasing} if $v[k]$ is strictly increasing at every time $k \ge 1$.
\end{definition}

Lemma \ref{lemma:monotonicity_induction} states an inductive property of the monotonicity.
\begin{lemma} \label{lemma:monotonicity_induction}
Under Assumptions \ref{assumption:spreading_parameters}--\ref{assumption:above_threshold}, the viral state $v[k]$ is strictly increasing at time $k$ if the viral state $v[k-1]$ is strictly increasing at time $k-1$. 
\end{lemma}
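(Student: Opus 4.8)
The plan is to express the one-step increment $v[k+1]-v[k]$ as a function of the previous increment $v[k]-v[k-1]$ and then read off positivity node by node. Writing the NIMFA recursion (\ref{NIMFA_disc_}) at times $k$ and $k-1$ and subtracting, I would substitute $v_i[k]=v_i[k-1]+\big(v_i[k]-v_i[k-1]\big)$ and $(Wv[k])_i=(Wv[k-1])_i+\big(W(v[k]-v[k-1])\big)_i$ into the infection term and collect. A short computation should yield, for every node $i$,
\begin{align*}
v_i[k+1]-v_i[k] = \Big(1-q_i-(Wv[k-1])_i\Big)\big(v_i[k]-v_i[k-1]\big) + \big(1-v_i[k]\big)\big(W(v[k]-v[k-1])\big)_i.
\end{align*}
This identity is the crux: the increment at time $k$ is a sum of two terms, each of which I will argue is non-negative, with at least one strictly positive.

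For the first term, the hypothesis that $v[k]$ is strictly increasing at time $k-1$ gives $v_i[k]-v_i[k-1]>0$ for every $i$, so its sign is that of the scalar coefficient $1-q_i-(Wv[k-1])_i$. Here Assumption \ref{assumption:sampling_time} enters decisively: the bound $T\le 1/(\delta_i+\sum_j\beta_{ij})$ is exactly $q_i+\sum_j w_{ij}\le 1$, and since $0\le v_j[k-1]\le v_{\infty,j}<1$ (by Corollary \ref{corollary:below_steady_state} and Lemma \ref{lemma:v_infty_bound}) we obtain $(Wv[k-1])_i\le \sum_j w_{ij}\le 1-q_i$. Hence $1-q_i-(Wv[k-1])_i\ge 0$ and the first term is non-negative. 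For the second term, $1-v_i[k]>0$ again follows from $v_i[k]\le v_{\infty,i}<1$, while $\big(W(v[k]-v[k-1])\big)_i\ge 0$ because $W$ is non-negative and $v[k]-v[k-1]>0$; thus the second term is non-negative as well.

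It remains to secure \emph{strict} positivity, which is the main obstacle. The delicate point is that the scalar coefficient $1-q_i-(Wv[k-1])_i$ may vanish for some node $i$, so the first term alone need not be positive. In that case I would lean entirely on the second term: because $v[k]-v[k-1]>0$ has \emph{all} components strictly positive, $\big(W(v[k]-v[k-1])\big)_i=\sum_j w_{ij}\big(v_j[k]-v_j[k-1]\big)$ is strictly positive as soon as node $i$ has at least one in-neighbour, i.e. some $w_{ij}>0$. Assumption \ref{assumption:connected_graph} (irreducibility of $W$, equivalently strong connectivity) guarantees precisely that every node has an incoming link, so $\big(W(v[k]-v[k-1])\big)_i>0$ for every $i$; combined with $1-v_i[k]>0$, the second term is strictly positive. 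Therefore $v_i[k+1]-v_i[k]>0$ for every node $i$, which is exactly $v[k+1]>v[k]$. The difficulty is thus not the algebra but ensuring the strict inequality uniformly over all nodes, which is exactly where strong connectivity is indispensable.
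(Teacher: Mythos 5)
Your proof is correct, but it follows a genuinely different decomposition than the paper's. The paper proves the induction step in steady-state-centred coordinates: using Proposition \ref{proposition:delta_v_equation} it writes
\[
\Delta v_i[k+1]-\Delta v_i[k]=\sum_{j=1}^N F_{ij}\left(\Delta v_j[k]-\Delta v_j[k-1]\right)+\sum_{j=1}^N w_{ij}\left(\Delta v_i[k-1]\Delta v_j[k-1]-\Delta v_i[k]\Delta v_j[k]\right),
\]
and reads off the signs from the non-negativity of $F$ (Lemma \ref{lemma:nonnegativity}) together with $\Delta v_i[k]\le 0$ (Corollary \ref{corollary:below_steady_state}). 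You instead subtract the NIMFA recursion (\ref{NIMFA_disc_}) at consecutive times and work in the original coordinates; your identity $v_i[k+1]-v_i[k]=\left(1-q_i-(Wv[k-1])_i\right)\left(v_i[k]-v_i[k-1]\right)+\left(1-v_i[k]\right)\left(W(v[k]-v[k-1])\right)_i$ is exact, and its first coefficient is precisely the quantity $h_i[k-1]$ from the paper's proof of Corollary \ref{corollary:below_steady_state}, so its non-negativity follows directly from Assumption \ref{assumption:sampling_time} and $v[k-1]\le u$, with no need for Proposition \ref{proposition:delta_v_equation}, the matrix $F$, or Lemma \ref{lemma:nonnegativity}. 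What your route buys is self-containedness and a more careful treatment of strictness: the paper asserts that both of its sums are ``positive'' from $F_{ij}\ge 0$ and $w_{ij}\ge 0$ alone, which literally only yields non-negativity; strict inequality in fact requires every row of $W$ to contain a positive entry, which is exactly the consequence of irreducibility (Assumption \ref{assumption:connected_graph}) that you invoke explicitly to get $\left(W(v[k]-v[k-1])\right)_i>0$ for every node $i$. Note that you, too, rely on Corollary \ref{corollary:below_steady_state} (to guarantee $v_i[k]\le v_{\infty,i}<1$, hence $1-v_i[k]>0$), so Assumption \ref{assumption:initial_state} is indispensable in both arguments. What the paper's route buys is uniformity with the rest of the text: the $\Delta v$/$F$ formalism of Proposition \ref{proposition:delta_v_equation} is reused in Theorem \ref{theorem:stability} and Propositions \ref{proposition:upper_bound_lti_B}--\ref{proposition:lower_bound_lti}, so the same machinery serves several proofs, whereas your computation is local to this lemma.
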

\begin{proof}
Appendix G.
\end{proof}
We obtain Theorem \ref{theorem:monotonicity}, which gives equivalent conditions to a globally strictly increasing viral state evolution.
\begin{theorem}[Monotonicity of the Viral State Evolution]\label{theorem:monotonicity}
Suppose that Assumptions \ref{assumption:spreading_parameters}--\ref{assumption:above_threshold} hold. Then, the viral state $v[k]$ is globally strictly increasing if and only if one of the following two (equivalent) statements holds.
\begin{enumerate}
\item The initial viral state $v[1]$ satisfies
\begin{align} \label{monotonicity_first_sta}
 \left( W - \textup{\textrm{diag}}(q)\right) v[1] >  \textup{\textrm{diag}}(q)\sum^\infty_{l = 2} v^l[1].
\end{align}

\item It holds
\begin{equation*}
\left(\textup{\textrm{diag}}\left( u - v_\infty \right)W\textup{\textrm{diag}}\left( u - v_\infty \right) - \textup{\textrm{diag}}(q) \right) z > \textup{\textrm{diag}}(q)\sum^\infty_{l = 2} z^l,
\end{equation*}
where the $N \times 1$ vector $z$ is given by
\begin{align*}
z_i = \frac{v_i[1] - v_{\infty, i}}{1 - v_{\infty, i}}, \quad i=1, ..., N.
\end{align*}
\end{enumerate}
\end{theorem}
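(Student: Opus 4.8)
The plan is to reduce the global statement to a single inequality at the first time step, and then to recognise both displayed conditions as reformulations of that inequality. The crucial observation is that Lemma \ref{lemma:monotonicity_induction} already supplies the inductive step: if $v[k-1]$ is strictly increasing, then so is $v[k]$. Consequently $v[k]$ is globally strictly increasing if and only if it is strictly increasing at the base time $k=1$, that is, if and only if $v[2]>v[1]$. The ``only if'' direction is immediate, since global strict monotonicity includes $k=1$; the ``if'' direction is the induction powered by Lemma \ref{lemma:monotonicity_induction}. It therefore suffices to show that each of the two statements is equivalent to $v[2]>v[1]$.

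First I would translate $v[2]>v[1]$ into a component-wise inequality. Evaluating the recursion (\ref{NIMFA_disc_}) at $k=1$ gives
\[
v_i[2] - v_i[1] = (1 - v_i[1]) \sum_{j=1}^N w_{ij} v_j[1] - q_i v_i[1].
\]
Because Lemma \ref{lemma:v_infty_bound} forces $v_{\infty,i}<1$ and Assumption \ref{assumption:initial_state} gives $v_i[1]\le v_{\infty,i}$, the factor $1-v_i[1]$ is strictly positive, so $v_i[2]>v_i[1]$ is equivalent to
\[
\sum_{j=1}^N w_{ij} v_j[1] > q_i \frac{v_i[1]}{1 - v_i[1]}.
\]
Call this inequality $(\star)$. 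Statement 1 then follows by writing $\frac{v_i[1]}{1-v_i[1]} = v_i[1] + \sum_{l=2}^\infty v_i^l[1]$ (a convergent geometric series, since $0\le v_i[1]<1$) and moving $q_i v_i[1]$ to the left, which is exactly the $i$-th component of (\ref{monotonicity_first_sta}).

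To reach Statement 2, I would substitute $v_i[1] = v_{\infty,i} + (1 - v_{\infty,i}) z_i$, equivalently $1 - v_i[1] = (1 - v_{\infty,i})(1 - z_i)$, into $(\star)$. The left-hand side splits as $\sum_j w_{ij} v_{\infty,j} + \sum_j w_{ij}(1 - v_{\infty,j}) z_j$, and the steady-state identity (\ref{steady_state_componentwise}) replaces the first sum by $q_i \frac{v_{\infty,i}}{1 - v_{\infty,i}}$. Cancelling this common term against the right-hand side, the combination
\[
\frac{v_{\infty,i} + (1 - v_{\infty,i}) z_i}{(1 - v_{\infty,i})(1 - z_i)} - \frac{v_{\infty,i}}{1 - v_{\infty,i}}
\]
collapses, after the numerators cancel down to $z_i$, to $\frac{z_i}{(1 - v_{\infty,i})(1 - z_i)}$. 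Multiplying through by $1 - v_{\infty,i}>0$ yields $(1 - v_{\infty,i})\sum_j w_{ij}(1 - v_{\infty,j}) z_j > q_i \frac{z_i}{1 - z_i}$, and the same geometric-series identity $\frac{z_i}{1-z_i} = z_i + \sum_{l=2}^\infty z_i^l$ produces precisely Statement 2 in vector form.

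The routine parts are the single evaluation of the recursion and the two geometric-series rewritings; the step I expect to be most delicate is the algebraic collapse in the last paragraph, where the steady-state relation must be used exactly to eliminate the $v_\infty$-terms and the fractional right-hand side must be simplified to the clean ratio $\frac{z_i}{(1-v_{\infty,i})(1-z_i)}$. I would also take care to record the convergence ranges of the two series, namely $v_i[1]<1$ for Statement 1 and $|z_i|<1$ for Statement 2, so that the geometric-series identities are legitimate in the regime under consideration.
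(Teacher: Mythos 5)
Your proof is correct and follows the same skeleton as the paper's: Lemma \ref{lemma:monotonicity_induction} reduces global strict monotonicity to the single base-case inequality $v[2]>v[1]$, which is then rewritten componentwise and via the geometric series. The one place you diverge is Statement 2. The paper obtains it from the equivalent $\Delta v$-representation of Proposition \ref{proposition:delta_v_equation}: it computes $\Delta v[2]-\Delta v[1]$, divides by $1-v_{\infty,i}-\Delta v_i[1]=1-v_i[1]>0$, and then applies the geometric series to $z_i=\Delta v_i[1]/(1-v_{\infty,i})$. You instead substitute $v_i[1]=v_{\infty,i}+(1-v_{\infty,i})z_i$ directly into the Statement-1 inequality and use the steady-state equation (\ref{steady_state_componentwise}) to cancel the $v_\infty$ terms; your collapse of the resulting bracket to $z_i/\bigl((1-v_{\infty,i})(1-z_i)\bigr)$ checks out. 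The two computations are algebraically equivalent (Proposition \ref{proposition:delta_v_equation} is itself derived by this kind of substitution plus the steady-state identity), so nothing substantive is gained or lost; your version is self-contained for this step, while the paper's reuses an already-established identity. One caveat, which applies to the paper's proof as much as to yours: the geometric series underlying Statement 2 requires $|z_i|<1$, i.e. $v_{\infty,i}-v_i[1]<1-v_{\infty,i}$, and this is \emph{not} implied by Assumptions \ref{assumption:spreading_parameters}--\ref{assumption:above_threshold} (take $v_i[1]=0$ and $v_{\infty,i}>1/2$); you at least flag the convergence range explicitly, which the paper does not, though a complete treatment would have to restrict the theorem's second statement to that regime or interpret the divergent series appropriately.
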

\begin{proof}
Appendix H.
\end{proof}
From Theorem \ref{theorem:monotonicity}, we obtain a corollary which states sufficient conditions for a globally strictly increasing viral state.
\begin{corollary}\label{corollary_monotonicity_12}
Suppose Assumptions \ref{assumption:spreading_parameters}--\ref{assumption:above_threshold} hold and that the initial viral state $v[1]$ equals either
\begin{align}\label{v1_small_x1}
v[1] = \epsilon x_1 + \eta
\end{align}
or 
\begin{align*}
v[1] = (1 - \epsilon)v_\infty + \eta
\end{align*}
for some small $\epsilon >0$ and an $N \times 1$ vector $\eta$ whose norm $\lVert \eta \rVert_2 = \mathcal{O}(\epsilon^p)$ for some scalar $p>1$ which is independent of $\epsilon$. Then, there exists an $\epsilon >0$ such that the viral state $v[k]$ is globally strictly increasing.
\end{corollary}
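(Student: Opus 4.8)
The plan is to invoke the equivalence established in Theorem \ref{theorem:monotonicity} and to verify, by a perturbation argument in $\epsilon$, that condition (1) of the theorem holds in the first case and condition (2) holds in the second. In both conditions the left-hand side is \emph{linear} in the initial data ($v[1]$, respectively $z$), while the right-hand side is a power series whose lowest-order term is quadratic; the strategy is therefore to show that the linear term is strictly positive and of order $\epsilon$, so that it dominates the higher-order remainder once $\epsilon$ is small. Before applying the theorem I would check that each candidate $v[1]$ satisfies Assumption \ref{assumption:initial_state} for small $\epsilon$ (the leading perturbations $\epsilon x_1$ and $-\epsilon v_\infty$ keep $0 \le v[1] \le v_\infty$ componentwise) and that the relevant geometric series converge, which holds because $v_i[1]\to 0$ and $z_i \to 0$ as $\epsilon \to 0$.

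For $v[1] = \epsilon x_1 + \eta$ I would use condition (1). Writing $W - \mathrm{diag}(q) = R - I$ and using $R x_1 = \rho(R)x_1$ from Lemma \ref{lemma:R_irreducible}, the left-hand side becomes $\epsilon(\rho(R)-1)x_1 + (R-I)\eta$. By Assumption \ref{assumption:above_threshold} and the positivity of $x_1$, the leading term $\epsilon(\rho(R)-1)x_1$ is strictly positive and of order $\epsilon$, whereas $(R-I)\eta = \mathcal{O}(\epsilon^p)$ with $p>1$. Since each $v_i[1] = \mathcal{O}(\epsilon)$, the right-hand side $\mathrm{diag}(q)\sum_{l=2}^{\infty} v^l[1]$ is $\mathcal{O}(\epsilon^2)$. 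Hence every component of the left-hand side exceeds the corresponding component of the right-hand side once $\epsilon$ is below some threshold, and taking the minimum of these thresholds over the finitely many nodes yields a single admissible $\epsilon$.

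For $v[1] = (1-\epsilon)v_\infty + \eta$ I would use condition (2). Setting $d_i = v_{\infty,i}/(1 - v_{\infty,i})$, a direct substitution gives $z = -\epsilon d + \mathcal{O}(\epsilon^p)$. The crux is the leading-order value of $Mz$, where $M = \mathrm{diag}(u - v_\infty)W\,\mathrm{diag}(u - v_\infty) - \mathrm{diag}(q)$. Using $\mathrm{diag}(u - v_\infty)d = v_\infty$ together with the steady-state identity (\ref{steady_state_componentwise}), which reads $W v_\infty = \mathrm{diag}(q)d$, one computes
\[
M d = \mathrm{diag}(u - v_\infty)W v_\infty - \mathrm{diag}(q)d = \bigl(\mathrm{diag}(u - v_\infty) - I\bigr)\mathrm{diag}(q)d = -\,\mathrm{diag}(v_\infty)\mathrm{diag}(q)d,
\]
so that $(Md)_i = -v_{\infty,i}\,q_i\,d_i < 0$ for every node $i$. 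Consequently the leading term of the left-hand side, $-\epsilon M d = \epsilon\,\mathrm{diag}(v_\infty)\mathrm{diag}(q)d$, is strictly positive of order $\epsilon$, while the right-hand side $\mathrm{diag}(q)\sum_{l=2}^{\infty} z^l$ is $\mathcal{O}(\epsilon^2)$; the same domination argument as in the first case then closes the proof.

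I expect the main obstacle to be the algebraic identity $M d = -\,\mathrm{diag}(v_\infty)\mathrm{diag}(q)d$ in the second case: it is what converts a non-obvious sign condition on the matrix $M$ into the clean strict negativity $(Md)_i < 0$, and it relies on recognizing that the steady-state equation collapses $\mathrm{diag}(u - v_\infty)W v_\infty$ into $\mathrm{diag}(u - v_\infty)\mathrm{diag}(q)d$. The remaining ingredients—verifying Assumption \ref{assumption:initial_state}, the convergence of the series, and the uniform choice of $\epsilon$ over the $N$ nodes—are routine once the leading-order balance of $\Theta(\epsilon)$ against $\mathcal{O}(\epsilon^2)$ has been made explicit.
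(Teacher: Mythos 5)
Your proposal is correct and follows essentially the same route as the paper: both cases invoke the two conditions of Theorem \ref{theorem:monotonicity}, use $Rx_1 = \rho(R)x_1$ (first case) and the steady-state equation (\ref{steady_state_componentwise}) (second case) to extract a strictly positive leading term of order $\epsilon$ on the left-hand side, and then dominate the $\mathcal{O}(\epsilon^p)$ and $\mathcal{O}(\epsilon^2)$ remainders. Your identity $Md = -\,\mathrm{diag}(v_\infty)\mathrm{diag}(q)d$ is just a compact matrix-form restatement of the paper's componentwise computation, which arrives at the same leading term $\epsilon q_i v^2_{\infty,i}/(1 - v_{\infty,i})$.
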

\begin{proof}
Appendix I.
\end{proof}
Numerical simulations show that if the initial viral state $v[1]$ approaches zero from an \textit{arbitrary} direction, which differs from (\ref{v1_small_x1}), then the viral state $v[k]$ is in general \textit{not} globally strictly increasing. However, the simulations also indicate that, if the initial viral state $v[1]$ is small, then the viral state seems \quotes{almost} globally strictly increasing, which is illustrated by Figure \ref{fig_not_monotonuous} and motivates us to state Definition \ref{definition:quasi_increasing}. 
\begin{definition}[Quasi-Increasing Viral State Evolution]\label{definition:quasi_increasing}
Define $S_-$ as the set of times $k \ge 1$ at which the viral state $v[k]$ is not strictly increasing:
\begin{align*}
S_- = \left\{k\in \mathbb{N} ~| ~\exists i: ~ v_i[k+1]\le v_i[k] \right\}.
\end{align*}
Then, the viral state $v[k]$ is \emph{quasi-increasing with stringency $\epsilon$}, if the set $S_-$ is finite and 
\begin{align*}
\lVert v[k+1] - v[k] \rVert_2 \le \epsilon 
\end{align*}
for every time $k$ in $S_-$.
\end{definition}
Thus, a quasi-increasing viral state $v[k]$ is strictly increasing at every time $k$ not in the set $S_-$, and at the times $k$ in the finite set $S_-$, the viral state $v[k]$ is decreasing only within an $\epsilon$-stringency. For the viral state trace $v[k]$ depicted in Figure \ref{fig_not_monotonuous}, the set $S_-$ equals $S_-= \{1,2, ..., 165\}$. Theorem \ref{theorem:soft_monotonicity} states that the viral state $v[k]$ is quasi-increasing with an arbitrarily small stringency $\epsilon$, provided that the initial viral state $v[1]$ is sufficiently small.
\begin{theorem}\label{theorem:soft_monotonicity}
Suppose that Assumptions \ref{assumption:spreading_parameters}--\ref{assumption:above_threshold} hold and that $v[1]\neq 0$. Then, for any $\epsilon >0$ there is a $\vartheta(\epsilon)$ such that $\lVert v[1] \rVert_2 \le \vartheta(\epsilon)$ implies that the viral state $v[k]$ is quasi-increasing with stringency $\epsilon$. 
\end{theorem}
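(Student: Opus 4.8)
The plan is to verify the two requirements of Definition \ref{definition:quasi_increasing} separately: that the set $S_-$ of non-increasing times is finite, and that the increment $\lVert v[k+1]-v[k]\rVert_2$ stays at most $\epsilon$ on $S_-$. The crucial structural fact is that, by Lemma \ref{lemma:monotonicity_induction}, strict increase at one time forces strict increase at every later time; equivalently $S_-$ is downward closed, so $S_-=\{1,\dots,K\}$ for some $K$ (or is empty). Hence it suffices to exhibit a single time $k_0$, \emph{independent of the magnitude $\lVert v[1]\rVert_2$}, at which the state is already strictly increasing; then $S_-\subseteq\{1,\dots,k_0-1\}$ is finite and, moreover, every bad time is confined to the early, small-state regime.

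To locate such a $k_0$ I would work near the origin. From (\ref{NIMFA_disc_stacked}) and $u-v[k]\le u$ one obtains the clean componentwise domination $0\le v[k]\le R^{k-1}v[1]$, using that $R$ in (\ref{definition_R}) is non-negative (Lemma \ref{lemma:R_irreducible}). Since $v[1]\ge 0$ with $v[1]\neq 0$ and $R$ is irreducible with positive diagonal entries (each at least $1-q_i>0$, hence $R$ is primitive), Perron--Frobenius gives a strictly dominant root $\rho(R)$ and a positive eigenvector $x_1$. Writing $v[1]=c_1x_1+r_1$ with $r_1$ in the complementary $R$-invariant subspace, the coefficient $c_1$ is the positive left-Perron projection of $v[1]$, which is bounded away from zero on the compact set of admissible directions $\{v[1]\ge 0,\ \lVert v[1]\rVert_2=1\}$. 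The spectral gap then yields $\lVert R^{k-1}r_1\rVert_2\le C\,\mu^{k-1}\lVert r_1\rVert_2$ with $\mu<\rho(R)$, so the \emph{relative} deviation of $R^{k-1}v[1]$ from the ray $\mathbb{R}_+x_1$ decays like $(\mu/\rho(R))^{k-1}$, uniformly over directions, and choosing $k_0$ large makes it as small as desired.

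At time $k_0$ I would establish strict increase directly from
\begin{align*}
v[k_0+1]-v[k_0] = (R-I)v[k_0]-\textrm{diag}(v[k_0])\,W v[k_0].
\end{align*}
Writing $v[k_0]=\epsilon' x_1+\eta'$ with $\lVert\eta'\rVert_2\le\gamma\,\epsilon'$ and $\gamma$ small, the linear part equals $\epsilon'(\rho(R)-1)x_1+(R-I)\eta'$, whose every component is at least $\epsilon'\big[(\rho(R)-1)\min_i (x_1)_i-\lVert R-I\rVert_\infty\gamma\big]>0$ once $\gamma$ is small (achievable by enlarging $k_0$), whereas the quadratic curing--infection term is $O(\epsilon'^2)$. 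Thus $v[k_0+1]>v[k_0]$ for small $\epsilon'$, and Lemma \ref{lemma:monotonicity_induction} propagates strict increase to all $k\ge k_0$. Two places require the nonlinearity to be handled: the alignment estimate is for the exact linear flow $R^{k-1}v[1]$, so I must bound the correction $\lVert v[k_0]-R^{k_0-1}v[1]\rVert_2=O(\lVert v[1]\rVert_2^{2})$ on the \emph{fixed} horizon $k\le k_0$ (a discrete Gronwall estimate whose constant depends only on $k_0$), and I must ensure $\epsilon'=\Theta(\lVert v[1]\rVert_2)$ is small; both are guaranteed by taking $\lVert v[1]\rVert_2$ small \emph{after} $k_0$ has been fixed. (One could alternatively phrase the increase at $k_0$ through condition 1 of Theorem \ref{theorem:monotonicity}, or read it off from Corollary \ref{corollary_monotonicity_12} applied at the shifted initial time $k_0$.)

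For the stringency bound, on $S_-\subseteq\{1,\dots,k_0-1\}$ the increment obeys $\lVert v[k+1]-v[k]\rVert_2\le(\lVert R-I\rVert_2+\lVert W\rVert_2)\lVert v[k]\rVert_2\le C'\lVert R\rVert_2^{\,k_0}\lVert v[1]\rVert_2$, using $0\le v[k]\le R^{k-1}v[1]$ together with $\lVert v[k]\rVert_\infty\le 1$ (Corollary \ref{corollary:below_steady_state}). Taking $\vartheta(\epsilon)$ to be the smaller of $\epsilon/(C'\lVert R\rVert_2^{\,k_0})$ and the magnitude threshold from the previous step then yields quasi-increasingness with stringency $\epsilon$. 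I expect the main obstacle to be the uniformity over the \emph{direction} of $v[1]$: the single time $k_0$ must be chosen independently both of $\lVert v[1]\rVert_2$ and of where $v[1]$ points, which is precisely what compactness of the admissible direction set, the strict spectral dominance of $\rho(R)$, and the lower bound on $c_1$ afforded by $v[1]\neq 0$ are designed to supply.
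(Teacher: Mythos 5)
Your proposal is correct and takes essentially the same approach as the paper's proof: both identify a horizon $k^*$ (your $k_0$) independent of $\lVert v[1]\rVert_2$ at which the spectral gap of $R$ has aligned the nearly-linear flow with the Perron direction $x_1$, both propagate strict increase from that time onward via Lemma \ref{lemma:monotonicity_induction} (the paper doing so through the inequality of Theorem \ref{theorem:monotonicity}), and both obtain the stringency bound from the fact that every state before the fixed horizon vanishes with $\lVert v[1]\rVert_2$. As a side remark, your decomposition $v[1]=c_1x_1+r_1$ via the left-Perron projection, combined with primitivity of $R$, is actually more careful than the paper's deflation $R=\rho(R)x_1x_1^{T}+B$: the identity $R^{k}=\rho(R)^{k}x_1x_1^{T}+B^{k}$ used in the paper requires $x_1$ to be a left eigenvector of $R$ as well, which holds for symmetric $W$ but not in the general directed case, whereas your invariant-subspace argument does not have this restriction.
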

\begin{proof}
Appendix J.
\end{proof}

\section{Bounds on the Viral State Dynamics} \label{subsec:bounds_lti}
Due to the non-linearity of the NIMFA equations (\ref{NIMFA_disc_stacked}), an analysis of the exact viral state evolution is challenging. However, it is possible to upper and lower bound the viral state $v[k]$ by LTI systems, which allows for an \textit{approximate} analysis of the viral state evolution. As stated by Proposition \ref{proposition:upper_bound_lti_A}, the linearisation (\ref{eq:LTI_sys_NIMFA_zero}) of the NIMFA model around zero directly yields an upper bound on the viral state $v[k]$. 
\begin{proposition}[First Upper Bound] \label{proposition:upper_bound_lti_A}
Suppose that Assumptions \ref{assumption:spreading_parameters}--\ref{assumption:initial_state} hold and define the LTI system 
\begin{align} \label{lti_upper_bound_A}
v^{(1)}_\textup{\textrm{ub}} [k + 1] =R v^{(1)}_\textup{\textrm{ub}}[k] \quad k \ge 1,
\end{align}
where the matrix $R$ is given by (\ref{definition_R}). If $v^{(1)}_{\textup{\textrm{ub}}}[1] \ge v[1]$, then it holds that $v^{(1)}_{\textup{\textrm{ub}}}[k] \ge v[k]$ at every time $k \ge 1$.  If $\rho \left( R \right) \ge 1$, then the LTI system (\ref{lti_upper_bound_A}) is unstable. If $\rho \left( R\right) < 1$, then the LTI system (\ref{lti_upper_bound_A}) is asymptotically stable.
\end{proposition}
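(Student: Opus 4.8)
The plan is to reduce the nonlinear NIMFA recursion to the linear comparison system by a monotone-systems argument, and then to obtain the two stability statements from the spectral characterisation of discrete-time LTI systems. First I would rewrite the update (\ref{NIMFA_disc_stacked}) in terms of $R$: expanding $\textrm{diag}(u-q)v[k] + \textrm{diag}(u-v[k])Wv[k]$ and collecting the linear part gives
\[
v[k+1] = R\,v[k] - \textrm{diag}(v[k])\,W v[k].
\]
The correction term $\textrm{diag}(v[k])Wv[k]$ is a product of componentwise non-negative factors — $W\ge 0$ by Assumption \ref{assumption:spreading_parameters} and $v[k]\ge 0$ since the viral state remains in $[0,1]^N$ — and is therefore non-negative. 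Hence $v[k+1]\le R\,v[k]$ at every time $k\ge 1$.

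I would then prove the comparison $v^{(1)}_{\textup{\textrm{ub}}}[k]\ge v[k]$ by induction on $k$. The base case $k=1$ is the hypothesis $v^{(1)}_{\textup{\textrm{ub}}}[1]\ge v[1]$. For the inductive step, assume $v^{(1)}_{\textup{\textrm{ub}}}[k]\ge v[k]\ge 0$. The key fact is that $R$ is entrywise non-negative: Assumption \ref{assumption:sampling_time} forces $q_i=\delta_i T\le \delta_i/(\delta_i+\sum_j\beta_{ij})\le 1$, so every diagonal entry $1-q_i+w_{ii}$ is non-negative while the off-diagonal entries equal $w_{ij}\ge 0$. Since multiplication by a non-negative matrix preserves the componentwise ordering,
\[
v^{(1)}_{\textup{\textrm{ub}}}[k+1]=R\,v^{(1)}_{\textup{\textrm{ub}}}[k]\ge R\,v[k]\ge v[k+1],
\]
which closes the induction.

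For the stability claims I would invoke the standard result that the LTI system $x[k+1]=Rx[k]$ is asymptotically stable exactly when $\rho(R)<1$, in which case $R^{k}\to 0$; this gives the third statement immediately. For the instability statement, I would use that the non-negative matrix $R$ has, by Perron--Frobenius, its spectral radius $\rho(R)$ as an eigenvalue with a non-negative eigenvector $y$. Then $R^{k}y=\rho(R)^{k}y$ does not decay to zero once $\rho(R)\ge 1$, and grows without bound when $\rho(R)>1$, so the system is not asymptotically stable (unstable).

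The main obstacle I anticipate is justifying the non-negativity $v[k]\ge 0$ — equivalently, the invariance of the box $[0,1]^N$ — which underlies the inequality $v[k+1]\le R\,v[k]$; this is precisely where Assumption \ref{assumption:sampling_time} does its work. I would dispatch it as a short preliminary induction: the same bound $q_i\le 1$ that makes $R$ non-negative also keeps each component $v_i[k+1]=(1-q_i)v_i[k]+(1-v_i[k])\sum_j w_{ij}v_j[k]$ in $[0,1]$ whenever $v[k]\in[0,1]^N$. Granting this invariance, the comparison is a textbook monotone-systems argument and the stability dichotomy is standard LTI theory, so no further difficulty is expected.
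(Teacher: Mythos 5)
Your proof is correct and takes essentially the same route as the paper: your two-step argument (dropping the non-negative quadratic term to get $v[k+1] \le R\,v[k]$, then using monotonicity of multiplication by the non-negative matrix $R$) is precisely the paper's single identity $v^{(1)}_{\textrm{ub}}[k+1] - v[k+1] = R\bigl(v^{(1)}_{\textrm{ub}}[k] - v[k]\bigr) + \textrm{diag}(v[k])\,W v[k]$ split into two inequalities, followed by the same induction, and both rest on the same ingredients ($w_{ij}\ge 0$, $1-q_i\ge 0$ via Assumption \ref{assumption:sampling_time}, and $v[k]\in[0,1]^N$). If anything, you are slightly more thorough than the paper on the peripheral claims, making the invariance of $[0,1]^N$ and the Perron--Frobenius argument for instability explicit where the paper simply asserts the standard LTI stability dichotomy.
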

\begin{proof}
Appendix K.
\end{proof}
 Additionally to the upper bound in Proposition \ref{proposition:upper_bound_lti_A}, the linearisation (\ref{eq:LTI_sys_NIMFA_ss}) of the NIMFA model around the steady-state $v_\infty$ yields another upper bound on the viral state $v[k]$, as stated by Proposition \ref{proposition:upper_bound_lti_B}.
\begin{proposition}[Second Upper Bound] \label{proposition:upper_bound_lti_B}
Under the Assumptions \ref{assumption:spreading_parameters}--\ref{assumption:above_threshold}, define the LTI system 
\begin{align} \label{lti_upper_bound_B}
\Delta v_\textup{\textrm{ub}}[k+1] & = F \Delta v_\textup{\textrm{ub}}[k] \quad k \ge 1,
\end{align}
where the $N \times N$ matrix $F$ is given by (\ref{matrix_F}). Then, the following statements hold true:
\begin{enumerate}
\item  If $\Delta v_{\textup{\textrm{ub}}}[1] \ge \Delta v[1]$, then it holds that $\Delta v_\textup{\textrm{ub}}[k] \ge \Delta v[k]$ at every time $k \ge 1$.

\item If $\Delta v_\textup{\textrm{ub}}[1] \le 0$, then it holds that $\Delta v_\textup{\textrm{ub}}[k] \le 0$ at every time $k$.
\end{enumerate}
\end{proposition}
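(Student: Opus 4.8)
The plan is to prove both statements by induction on $k$, exploiting the non-negativity of the matrix $F$ established in Lemma \ref{lemma:nonnegativity} together with the sign information on $\Delta v[k]$ supplied by Corollary \ref{corollary:below_steady_state}.

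For the first statement I would introduce the error vector $e[k] = \Delta v_\textup{\textrm{ub}}[k] - \Delta v[k]$ and derive a recursion for it. Subtracting the exact evolution (\ref{delta_v_system}) of Proposition \ref{proposition:delta_v_equation} from the LTI evolution (\ref{lti_upper_bound_B}), the shared linear part $F$ cancels in a convenient way and yields
\begin{align*}
e[k+1] = F e[k] + \textup{\textrm{diag}}(\Delta v[k]) W \Delta v[k].
\end{align*}
The crucial observation is that the additive term $\textup{\textrm{diag}}(\Delta v[k]) W \Delta v[k]$ is non-negative. By Corollary \ref{corollary:below_steady_state} we have $\Delta v[k] \le 0$, so $\textup{\textrm{diag}}(\Delta v[k])$ has non-positive diagonal entries, and since $W$ is non-negative by Assumption \ref{assumption:spreading_parameters} the vector $W \Delta v[k]$ is non-positive; the componentwise product of two non-positive quantities is non-negative. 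With the base case $e[1] \ge 0$ provided by the hypothesis $\Delta v_\textup{\textrm{ub}}[1] \ge \Delta v[1]$, the induction step closes immediately: if $e[k] \ge 0$ then $F e[k] \ge 0$ because $F \ge 0$, and adding the non-negative remainder gives $e[k+1] \ge 0$, which is exactly $\Delta v_\textup{\textrm{ub}}[k+1] \ge \Delta v[k+1]$.

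The second statement is more direct and requires only the non-negativity of $F$. Taking the base case $\Delta v_\textup{\textrm{ub}}[1] \le 0$ and the recursion (\ref{lti_upper_bound_B}), I would argue inductively that if $\Delta v_\textup{\textrm{ub}}[k] \le 0$, then $\Delta v_\textup{\textrm{ub}}[k+1] = F \Delta v_\textup{\textrm{ub}}[k] \le 0$, since a non-negative matrix maps non-positive vectors to non-positive vectors. This closes the induction and establishes $\Delta v_\textup{\textrm{ub}}[k] \le 0$ at every time $k$.

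I do not expect any single step to be a serious obstacle; the decisive step is recognising that the nonlinear remainder $-\textup{\textrm{diag}}(\Delta v[k]) W \Delta v[k]$ appearing in (\ref{delta_v_system}), which the linearisation (\ref{lti_upper_bound_B}) discards, has a definite (non-negative) sign under the invariance $\Delta v[k] \le 0$. It is precisely this sign, combined with $F \ge 0$, that forces the LTI trajectory to dominate the true trajectory and thereby justifies calling (\ref{lti_upper_bound_B}) an upper bound.
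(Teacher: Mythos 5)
Your proposal is correct and follows essentially the same route as the paper's proof: the same induction on $k$, the same error recursion $e[k+1] = F e[k] + \textup{\textrm{diag}}(\Delta v[k])\, W \Delta v[k]$, with non-negativity of the remainder deduced from Corollary \ref{corollary:below_steady_state} and $W \ge 0$, and non-negativity of $F$ from Lemma \ref{lemma:nonnegativity}; the second statement is likewise handled identically. No gaps.
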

\begin{proof}
Appendix L.
\end{proof}
 Hence, the LTI system (\ref{lti_upper_bound_B}) yields the upper bound 
 \begin{align*}
 v^{(2)}_\textrm{ub}[k] = \Delta v_\textup{\textrm{ub}}[k] + v_{\infty} \ge v[k] 
 \end{align*}
 on the viral state $v[k]$ at every time $k$. If Assumption \ref{assumption:initial_state} holds and $0 \ge \Delta v_{\textup{\textrm{ub}}}[1] = \Delta v[1]$, then it holds that $0 \ge \Delta v_{\textup{\textrm{ub}}}[k] \ge \Delta v[k]$ for every time $k$. Thus, the convergence of $\Delta v[k]$ to 0 implies the convergence of $\Delta v_{\textup{\textrm{ub}}}[k]$ to 0. The upper bound of Proposition \ref{proposition:upper_bound_lti_A} is tight when the viral state $v[k]$ is small, and the upper bound of Proposition \ref{proposition:upper_bound_lti_B} is tight when the viral state $v[k]$ is close to the steady-state $v_\infty$. We combine Propositions \ref{proposition:upper_bound_lti_A} and \ref{proposition:upper_bound_lti_B} to obtain a tighter upper bound, for every node $i=1, ..., N$, as
\begin{align}\label{v_ub_1_and_2}
v_{\textrm{ub}, i}[k] = \textrm{min}\{v^{(1)}_{\textrm{ub}, i}[k], v^{(2)}_{\textrm{ub}, i}[k]\}.
\end{align} 
 Finally, Proposition \ref{proposition:lower_bound_lti} provides a lower bound on the viral state $v[k]$. 
\begin{proposition}[Lower Bound] \label{proposition:lower_bound_lti}
Suppose that Assumptions \ref{assumption:spreading_parameters}--\ref{assumption:above_threshold} hold and let there be an $N\times 1$ vector $v_\textup{\textrm{min}}>0$ such that $v[k] \ge v_\textup{\textrm{min}}$ holds at every time $k\ge 1$. Furthermore, let $\Delta v_{\textup{\textrm{lb}}}[1] = \Delta v[1]$ and define the LTI system 
\begin{align} \label{lti_lower_bound}
\Delta v_\textup{\textrm{lb}}[k+1] & = F_\textup{\textrm{lb}} \Delta v_\textup{\textrm{lb}}[k] \quad k \ge 1,
\end{align}
where the $N\times N$ matrix $F_\textup{\textrm{lb}}$ is given by
\begin{equation*} 
F_\textup{\textrm{lb}} = I + \textup{\textrm{diag}}\left( \frac{q_1}{ v_{\infty, 1} - 1}, ...,  \frac{q_N}{ v_{\infty, N} - 1}\right) +  \textup{\textrm{diag}}\left(u- v_\textup{\textrm{min}} \right) W.
\end{equation*}
Then, the following statements hold true:
\begin{enumerate}
\item It holds that $\Delta v_{\textup{\textrm{lb}}}[k] \le \Delta v [k] \le 0$ at every time $k \ge 1$. 

\item Denote $\gamma = \textup{\textrm{min}}\{v_{\textup{\textrm{min}}, 1}, ..., v_{\textup{\textrm{min}}, N}\}$. Then, it holds
\begin{align*}
\Delta v_\textup{\textrm{lb}}[k] \ge - \left( 1 - q_\textup{\textrm{min}}\frac{\gamma}{1 - \gamma}\right)^{k-1} v_\infty.
\end{align*}
Hence, $\Delta v_{\textup{\textrm{lb}}}[k] \rightarrow 0$ when $k \rightarrow \infty$.
\end{enumerate}
\end{proposition}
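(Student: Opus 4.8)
The plan is to reduce the nonlinear $\Delta v$ dynamics to a comparison between two linear recursions, and then to extract the geometric decay rate directly from the steady-state equation \eqref{steady_state_componentwise}. Throughout, write $D = I + \mathrm{diag}\!\left(\frac{q_1}{v_{\infty,1}-1}, \ldots, \frac{q_N}{v_{\infty,N}-1}\right)$, so that $F = D + \mathrm{diag}(u - v_\infty)W$ and $F_\textrm{lb} = D + \mathrm{diag}(u - v_\textrm{min})W$.

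First I would rewrite the \emph{exact} evolution of $\Delta v[k]$ from Proposition \ref{proposition:delta_v_equation} as a linear time-varying recursion. Substituting $F = D + \mathrm{diag}(u-v_\infty)W$ into \eqref{delta_v_system} and collecting the two $W$-terms gives $\mathrm{diag}(u - v_\infty - \Delta v[k])\,W\Delta v[k]$; using $u - v_\infty - \Delta v[k] = u - v[k]$ this collapses to $\Delta v[k+1] = \tilde F[k]\,\Delta v[k]$ with $\tilde F[k] = D + \mathrm{diag}(u - v[k])W$. This is the key reformulation: the quadratic term disappears and the comparison with $F_\textrm{lb}$ becomes a matter of comparing the diagonal scalings $u-v[k]$ and $u-v_\textrm{min}$.

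Next I would establish the element-wise chain $0 \le F \le \tilde F[k] \le F_\textrm{lb}$. Non-negativity of $F$ is Lemma \ref{lemma:nonnegativity}; the remaining inequalities follow from $v_\textrm{min} \le v[k] \le v_\infty$ — the upper bound being Corollary \ref{corollary:below_steady_state} and the lower bound the hypothesis on $v_\textrm{min}$ — together with $W\ge 0$, since $\tilde F[k]-F = \mathrm{diag}(v_\infty - v[k])W \ge 0$ and $F_\textrm{lb}-\tilde F[k] = \mathrm{diag}(v[k]-v_\textrm{min})W \ge 0$. Statement~1 then follows by induction. The inequality $\Delta v[k]\le 0$ is Corollary \ref{corollary:below_steady_state}. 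For $\Delta v_\textrm{lb}[k]\le \Delta v[k]$, the base case is the assumed equality $\Delta v_\textrm{lb}[1]=\Delta v[1]$, and for the inductive step I would write
\begin{equation*}
\tilde F[k]\Delta v[k] - F_\textrm{lb}\Delta v_\textrm{lb}[k] = (\tilde F[k] - F_\textrm{lb})\Delta v[k] + F_\textrm{lb}\big(\Delta v[k] - \Delta v_\textrm{lb}[k]\big),
\end{equation*}
where the first summand is a product of a non-positive matrix and a non-positive vector, hence non-negative, and the second is a product of the non-negative matrix $F_\textrm{lb}$ with the non-negative vector $\Delta v[k]-\Delta v_\textrm{lb}[k]$ supplied by the induction hypothesis; thus $\Delta v_\textrm{lb}[k+1]\le \Delta v[k+1]$.

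For statement~2 I would set $w[k] = -\Delta v_\textrm{lb}[k]\ge 0$, so that $w[k+1]=F_\textrm{lb}w[k]$, and prove $w[k]\le c^{k-1}v_\infty$ with $c = 1 - q_\textrm{min}\frac{\gamma}{1-\gamma}$ by induction. The engine is the eigenvalue-type estimate $F_\textrm{lb}v_\infty \le c\,v_\infty$. Computing componentwise and inserting the steady-state relation $\sum_j w_{ij}v_{\infty,j} = q_i\frac{v_{\infty,i}}{1-v_{\infty,i}}$ yields $(F_\textrm{lb}v_\infty)_i = v_{\infty,i}\big(1 - \frac{q_i v_{\textrm{min},i}}{1-v_{\infty,i}}\big)$; then using $q_i\ge q_\textrm{min}$, $v_{\textrm{min},i}\ge\gamma$, and $v_{\infty,i}\ge v_{\textrm{min},i}\ge\gamma$ (so $\tfrac{1}{1-v_{\infty,i}}\ge\tfrac{1}{1-\gamma}$) gives $\frac{q_i v_{\textrm{min},i}}{1-v_{\infty,i}}\ge \frac{q_\textrm{min}\gamma}{1-\gamma}$ and hence $(F_\textrm{lb}v_\infty)_i\le c\,v_{\infty,i}$. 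The base case is $w[1]=v_\infty - v[1]\le v_\infty$ (since $v[1]\ge 0$ by Assumption \ref{assumption:initial_state}), and the step is $w[k+1]=F_\textrm{lb}w[k]\le c^{k-1}F_\textrm{lb}v_\infty\le c^k v_\infty$, using $F_\textrm{lb}\ge 0$. Convergence follows because $c\in[0,1)$: strictness $c<1$ is immediate from $q_\textrm{min},\gamma>0$, while $c\ge 0$ follows from $(F_\textrm{lb}v_\infty)_i\ge 0$, which forces $\frac{q_\textrm{min}\gamma}{1-\gamma}\le\frac{q_i v_{\textrm{min},i}}{1-v_{\infty,i}}\le 1$. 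I expect the two genuinely creative points to be the reformulation $\tilde F[k]=D+\mathrm{diag}(u-v[k])W$ that removes the nonlinearity, and the steady-state computation that pins down the exact rate $c$; the comparison-matrix bookkeeping is routine but sign-sensitive and must be handled carefully.
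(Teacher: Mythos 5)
Your proof is correct, and it differs from the paper's in a way worth recording. For Statement 1, the paper keeps the quadratic term of \eqref{delta_v_system}: it writes $F_\textup{\textrm{lb}} = F - \textup{\textrm{diag}}(v_\textup{\textrm{min}} - v_\infty)W$, forms $\Delta v[k+1]-\Delta v_\textup{\textrm{lb}}[k+1]$, and bounds the residual componentwise via a two-step inequality that uses $\Delta v_j[k]\ge \Delta v_{\textup{\textrm{lb}},j}[k]$, $\Delta v_i[k]\le 0$, $\Delta v_i[k]\ge v_{\textup{\textrm{min}},i}-v_{\infty,i}$ \emph{and} (implicitly, from the induction hypothesis) $\Delta v_{\textup{\textrm{lb}},j}[k]\le 0$. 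Your route instead absorbs the quadratic term into the exact linear time-varying form $\Delta v[k+1]=\tilde F[k]\Delta v[k]$ with $\tilde F[k]=D+\textup{\textrm{diag}}(u-v[k])W$, after which everything reduces to the elementwise sandwich $0\le F\le \tilde F[k]\le F_\textup{\textrm{lb}}$ and matrix monotonicity; this is cleaner, needs fewer sign conditions (in particular it never needs $\Delta v_\textup{\textrm{lb}}[k]\le 0$), and isolates exactly where the hypotheses $v_\textup{\textrm{min}}\le v[k]$ (assumption) and $v[k]\le v_\infty$ (Corollary \ref{corollary:below_steady_state}) enter. For Statement 2, your argument is essentially the paper's: the paper proves, via the steady-state equation \eqref{steady_state_componentwise}, that $-F_\textup{\textrm{lb}}v_\infty \ge -\bigl(1-q_\textup{\textrm{min}}\tfrac{\gamma}{1-\gamma}\bigr)v_\infty$ (its Lemma 8) and combines it with monotonicity of $F_\textup{\textrm{lb}}$ (its Lemma 7) in an induction on the auxiliary sequence $z^{(k)}=F_\textup{\textrm{lb}}^k(-v_\infty)$; your $w[k]=-\Delta v_\textup{\textrm{lb}}[k]$ formulation with $F_\textup{\textrm{lb}}v_\infty\le c\,v_\infty$ is the same computation with flipped signs and without the intermediate sequence. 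One genuine improvement on your side: both inductions multiply the estimate $F_\textup{\textrm{lb}}v_\infty\le c\,v_\infty$ by $c^{k-1}$, which preserves the inequality only if $c\ge 0$; the paper uses this silently, whereas you derive $c\ge 0$ explicitly from $(F_\textup{\textrm{lb}}v_\infty)_i\ge 0$, closing a small gap in the paper's own argument.
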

\begin{proof}
Appendix M.
\end{proof}
Hence, the LTI system (\ref{lti_lower_bound}) yields the lower bound 
\begin{align} \label{v_lb}
v_\textrm{lb}[k]  \le \Delta v_\textrm{lb}[k] + v_{\infty}
\end{align}
on the viral state $v[k]$ at every time $k$. In particular, if the viral state $v[k]$ is strictly increasing at every time $k \ge 1$, as discussed in Section \ref{subsec:monotonicity}, then the vector $v_\textrm{min}$ can be chosen as $v_\textrm{min} = v[1]$. Proposition \ref{proposition:lower_bound_lti} implies that the viral state $v[k]$ converges to the steady-state $v_\infty$ exponentially fast:
\begin{corollary}[Steady-State is Globally Exponentially Stable] \label{corollary:exponential_stability}
Suppose that Assumptions \ref{assumption:spreading_parameters}--\ref{assumption:above_threshold} hold. Then, for any initial viral state $v[1] > 0$ there is a constant $\alpha < 1$ such that 
\begin{align} \label{jkhbjkhass}
\lVert v[k] - v_\infty \rVert_2 \le \lVert v_\infty \rVert_2 \alpha^{k-1} \quad \forall k\ge 1.
\end{align}
If the viral state $v[k]$ is furthermore globally strictly increasing (cf. Theorem \ref{theorem:monotonicity}), then (\ref{jkhbjkhass}) is satisfied for $\alpha =  (1  - q_\textup{\textrm{min}} \gamma / (1- \gamma))$, where $\gamma = \textup{\textrm{min}}\left\{v_1[1], ..., v_N[1]\right\}$.
\end{corollary}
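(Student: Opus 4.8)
The plan is to read off the exponential estimate (\ref{jkhbjkhass}) directly from the Lower Bound of Proposition \ref{proposition:lower_bound_lti} combined with Corollary \ref{corollary:below_steady_state}. Corollary \ref{corollary:below_steady_state} already gives $\Delta v[k] = v[k] - v_\infty \le 0$ at every time $k$, so the task reduces to bounding $\Delta v[k]$ from below by a geometrically decaying quantity. Proposition \ref{proposition:lower_bound_lti} delivers exactly such a bound, but only after one exhibits a vector $v_\textrm{min} > 0$ with $v[k] \ge v_\textrm{min}$ for all $k \ge 1$. Hence the whole corollary splits into (i) producing such a uniform positive lower bound $v_\textrm{min}$ for an arbitrary $v[1] > 0$, and (ii) a short computation turning the componentwise conclusion of Proposition \ref{proposition:lower_bound_lti} into the stated $\ell_2$-estimate.

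I would handle the two assertions in reverse order. If the viral state is globally strictly increasing, then $v[k] \ge v[k-1] \ge \cdots \ge v[1]$, so one may take $v_\textrm{min} = v[1]$ and $\gamma = \min\{v_1[1], \ldots, v_N[1]\}$, which is positive since $v[1] > 0$; this yields precisely the explicit constant $\alpha = 1 - q_\textrm{min}\gamma/(1-\gamma)$ of the second part. For a general $v[1] > 0$ the orbit need not be monotone, and the construction of $v_\textrm{min}$ is the heart of the argument. The idea I would use is to track the scalar
\begin{align*}
\phi[k] = \min_{i=1,\ldots,N} \frac{v_i[k]}{v_{\infty,i}},
\end{align*}
which lies in $(0,1]$ because $0 < v_i[k] \le v_{\infty,i}$ (positivity is preserved since $v_i[k+1] \ge (1-q_i)v_i[k]$ with $1-q_i>0$ under Assumption \ref{assumption:sampling_time}, and the upper bound is Corollary \ref{corollary:below_steady_state}). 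I claim $\phi[k]$ is non-decreasing. Fixing a node $i$ and inserting $v_l[k] \ge \phi[k] v_{\infty,l}$ into (\ref{NIMFA_disc_}) gives
\begin{align*}
v_i[k+1] \ge \phi[k]\left( (1-q_i) v_{\infty,i} + (1 - v_i[k]) \sum_{j=1}^N w_{ij} v_{\infty,j} \right).
\end{align*}
Substituting the steady-state identity (\ref{steady_state_componentwise}), i.e. $\sum_j w_{ij} v_{\infty,j} = q_i v_{\infty,i}/(1 - v_{\infty,i})$, and using $1 - v_i[k] \ge 1 - v_{\infty,i}$ (again Corollary \ref{corollary:below_steady_state}) lower-bounds the bracket by $v_{\infty,i}$, so $v_i[k+1] \ge \phi[k] v_{\infty,i}$ for every $i$ and therefore $\phi[k+1] \ge \phi[k]$. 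Consequently $v_i[k] \ge \phi[1] v_{\infty,i}$ for all $k$, and I may set $v_{\textrm{min},i} = \phi[1] v_{\infty,i} > 0$ with $\gamma = \phi[1]\min_i v_{\infty,i} \in (0,1)$.

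With $v_\textrm{min}$ secured, Proposition \ref{proposition:lower_bound_lti} yields $-\alpha^{k-1} v_\infty \le \Delta v_\textrm{lb}[k] \le \Delta v[k] \le 0$ with $\alpha = 1 - q_\textrm{min}\gamma/(1-\gamma)$, hence componentwise $|\Delta v_i[k]| \le \alpha^{k-1} v_{\infty,i}$. Squaring and summing over $i$ gives $\lVert v[k] - v_\infty \rVert_2 \le \alpha^{k-1} \lVert v_\infty \rVert_2$, which is (\ref{jkhbjkhass}). Here $\alpha < 1$ because $q_\textrm{min} > 0$ and $\gamma \in (0,1)$, while $\alpha \ge 0$ is inherited from Proposition \ref{proposition:lower_bound_lti}, which already presents this quantity as a decaying power; this closes the estimate, and in the globally increasing case the concrete $\gamma = \min\{v_1[1],\ldots,v_N[1]\}$ is recovered.

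The only genuinely non-routine step is the construction of the uniform lower bound $v_\textrm{min}$ in the non-monotone case, and I expect it to be the main obstacle. The crude choice $v[k] \ge v[1]$ fails because the orbit may dip (cf. Figure \ref{fig_not_monotonuous}), and one cannot simply invoke global convergence $v[k] \to v_\infty$, since that convergence is itself a consequence of this corollary. The monotonicity of the normalised minimum $\phi[k]$ circumvents both difficulties, as its proof uses only the steady-state equation (\ref{steady_state_componentwise}) and the already-established invariance $v[k] \le v_\infty$.
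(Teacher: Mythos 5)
Your proposal is correct, and its outer structure matches the paper's proof: both reduce the corollary to Proposition \ref{proposition:lower_bound_lti} combined with Corollary \ref{corollary:below_steady_state}, and both dispose of the globally strictly increasing case by taking $v_\textup{\textrm{min}}=v[1]$ and $\gamma=\min_i v_i[1]$. The genuine divergence is at the key step, the existence of a uniform bound $v_\textup{\textrm{min}}>0$ for an arbitrary, possibly non-monotone orbit. The paper proves this (its Lemma \ref{lemma:v_min_existence}) by a qualitative argument: positivity is preserved, instability of the origin under Assumption \ref{assumption:above_threshold} keeps the state vector away from zero, and strong connectivity (Assumption \ref{assumption:connected_graph}) propagates a per-node lower bound from one node to its neighbours. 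You instead prove that the normalised minimum $\phi[k]=\min_i v_i[k]/v_{\infty,i}$ is non-decreasing, using only the steady-state identity (\ref{steady_state_componentwise}), the invariance $v_i[k]\le v_{\infty,i}$ from Corollary \ref{corollary:below_steady_state}, and $1-q_i\ge 0$ from Assumption \ref{assumption:sampling_time}, and then set $v_\textup{\textrm{min}}=\phi[1]\,v_\infty$; your computation of the bracket bound $(1-q_i)v_{\infty,i}+q_iv_{\infty,i}=v_{\infty,i}$ is correct. This route buys two things: it is constructive, so the first assertion of the corollary comes with an explicit rate $\alpha=1-q_\textup{\textrm{min}}\gamma/(1-\gamma)$, $\gamma=\phi[1]\min_i v_{\infty,i}$, for every $v[1]>0$, whereas the paper only asserts that some $\alpha<1$ exists; and it sidesteps the informal steps in the paper's Lemma \ref{lemma:v_min_existence} (instability of the origin by itself does not rigorously exclude that the orbit approaches zero, and the selection there of one fixed node $i$ with $v_i[k]\ge v_{\textup{\textrm{min}},i}$ for all $k$ is only sketched). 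What the paper's argument buys in exchange is that it never invokes $v[k]\le v_\infty$, so it is insensitive to Assumption \ref{assumption:initial_state}, while your monotone-$\phi$ argument genuinely needs Corollary \ref{corollary:below_steady_state} for the estimate $1-v_i[k]\ge 1-v_{\infty,i}$. One gloss you share with the paper: converting the componentwise bound $-\alpha^{k-1}v_\infty\le\Delta v[k]\le 0$ into the $\ell_2$ estimate requires $\alpha\ge 0$; this does hold, since $\gamma\le v_{\infty,i}\le 1-q_i$ for every $i$ (shown in the proof of Lemma \ref{lemma:nonnegativity}) yields $q_\textup{\textrm{min}}\gamma/(1-\gamma)\le 1-\max_i q_i<1$, but neither you nor the paper states it explicitly.
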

\begin{proof}
Appendix N.
\end{proof}

In the susceptible-infected-susceptible (SIS) epidemic process\cite{daley2001epidemic, pastor2015epidemic}, the \textit{hitting time} $T_{H_n}$ is the first time when the SIS process reaches a state with $n$ infected nodes. As argued in \cite{he2018spreading}, the average hitting time $\textrm{E}[T_{H_n}]$ scales exponentially with respect to the number $n$ of infected nodes, which is in agreement with the exponential convergence to the steady state $v_\infty$ for the NIMFA epidemic model\footnote{For an SIS process, the \textit{spreading time} \cite{van2014time} is another measure for the time of convergence to the metastable state. For the spreading time, the convergence to the metastable state is defined differently for every realisation of the same SIS epidemic process. Hence, the spreading time is subject to random fluctuations, which approximately follow a lognormal distribution \cite{he2018spreading}, contrary to the deterministic NIMFA model (\ref{NIMFA_disc_}) and the average hitting time $\textrm{E}[T_{H_n}]$ of an SIS process.}.

We provide a numerical evaluation of the upper bound $v_\textrm{ub}[k]$, given by (\ref{v_ub_1_and_2}), and the lower bound $v_\textrm{lb}[k]$, given by (\ref{v_lb}). We generate a directed Erd{\H o}s-R{\'e}nyi random graph with $N=500$ nodes by creating a directed link $a_{ij} = 1$ from any node $j$ to any node $i$ with link probability $0.05$. We generate another graph if the resulting graph is not strongly connected. If $a_{ij} = 1$, then we set the infection rate $w_{ij}$ to a uniformly distributed random number in $[0,1]$, and, if $a_{ij}=0$, then we set $w_{ij}=0$. The curing rate $q_i$ for every node $i$ is set to a uniformly distributed random number in $[0.95 c,1.05c]$, where $c = 10$ is a constant. If the spectral radius $\rho(R)\le 1+10^{-5}$, then we set the constant $c$ to $c/1.005$ and generate new curing rates $q$, and we repeat this generation of curing rates $q$ until $\rho(R)> 1+10^{-5}$. The sampling time $T$ is set to $T = T_\textup{\textrm{max}}/20$, given by (\ref{kjbnkaaass}). For every node $i$, the initial viral state $v_i[1]$ is set to a uniformly distributed random number $[0,0.1v_{\infty, i}]$. Figure \ref{fig_V_bounds} illustrates the goodness of the bounds $v_\textrm{ub}[k]$ and $v_\textrm{lb}[k]$ on the viral state $v[k]$, when the bounds are initialised at different times $k_0\ge 1$, i.e. $v_\textrm{lb}[k_0] = v[k_0] = v_\textrm{ub}[k_0]$. For a small initialisation time $k_0$, the upper bound $v_\textrm{ub}[k]$ results in a reasonable fit, whereas the lower bound $v_\textrm{lb}[k]$ does not perform well. If the initialisation time $k_0$ is greater, then both bounds $v_\textrm{lb}[k]$, $v_\textrm{ub}[k]$ give a tight fit to the exact viral state $v[k]$.

\section{Conclusions}
In this work, we analysed the discrete-time NIMFA epidemic model with heterogeneous spreading parameters on directed graphs. Our contribution is threefold.

First, we gave an alternative and equivalent representation of the NIMFA equations. We proved that the steady-state $v_\infty$ is asymptotically stable. Furthermore, we showed that the viral state $v[k]$ approaches the steady-state $v_\infty$ without overshooting, which is a phenomenon that occurs in many real-world epidemics.

Second, provided that the initial viral state $v[1]$ is sufficiently small, we showed that the viral state $v[k]$ is increasing, which, again, is an important characteristic of real-world epidemics.

Third, we derived linear systems that give upper and lower bounds on the viral state $v[k]$, and we proved that the viral state $v[k]$ converges to the steady-state $v_\infty$ exponentially fast.

In conclusion, we have shown that the discrete-time NIMFA epidemic model captures the qualitative behaviour of real-world epidemics. Due to the heterogeneity of the spreading parameters and the directedness of the underlying contact network, the NIMFA system allows for modelling a broad spectrum of real-world spreading phenomena.

\section*{Acknowledgements}\label{sec:acknowledgements}                                                                          
We are grateful to Qiang Liu for helpful discussions on this material.

\begin{figure}[H]
	\centering
	 \includegraphics{./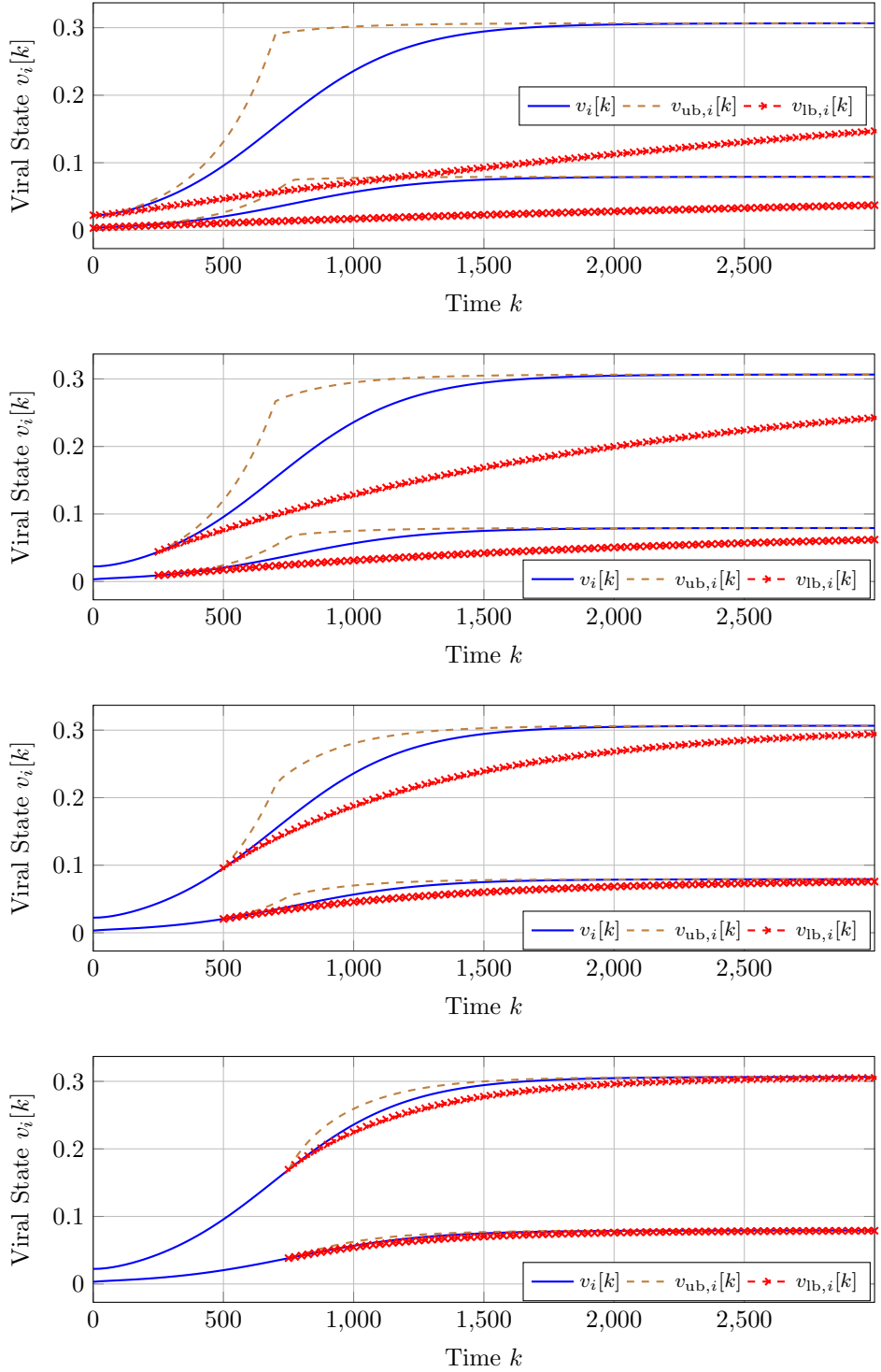}
	{\footnotesize\caption{For a directed Erd{\H o}s-R{\'e}nyi random graph with $N=500$ nodes and heterogeneous spreading parameters $q, W$, the fit of the lower bound $v_\textrm{lb}[k]$ and the upper bound $v_\textrm{ub}[k]$ on the viral state $v[k]$ is depicted. Each of the four sub-plots shows two viral state traces $v_i[k]$ and the corresponding bounds of the two nodes with the maximal and minimal steady-state $v_{\infty, i}$, respectively. From top to bottom, the sub-plots correspond to an initialisation of the bounds $v_\textrm{lb}[k_0] = v[k_0] = v_\textrm{ub}[k_0]$ at time $k_0 = 1$, $k_0 = 250$, $k_0 = 500$ and $k_0 = 750$, respectively.} \label{fig_V_bounds}}
\end{figure}

\appendix

\section{Proof of Lemma 1}
The elements of the matrix $R$, defined by (\ref{definition_R}), equal
\begin{align*}
R_{ij} = \begin{cases}
 1 - q_i +w_{ii}\quad &\text{if} \quad i = j, \\
 w_{ij}&\text{if} \quad i \neq j. 
 \end{cases}
\end{align*}
Under Assumption \ref{assumption:spreading_parameters}, it holds $w_{ij} \ge 0$ for all nodes $i, j$. Thus, the off-diagonal entries of the matrix $R$ are non-negative. For the diagonal entries of the matrix $R$, it holds
\begin{align*}
R_{ii} = 1 - q_i +w_{ii} \ge 1 - \delta_i T,
\end{align*}
since $w_{ii}\ge 0$ and $q_i = \delta_i T$. From Assumption \ref{assumption:sampling_time}, we further obtain that
\begin{align*}
R_{ii} \ge 1 - \delta_i  \frac{1}{\delta_i + \sum^N_{j=1}\beta_{ij}} \ge 0.
\end{align*}
Hence, the matrix $R$ is non-negative. Furthermore, the matrix $R$ is irreducible, which follows from the irreducibility of the infection rate matrix $W$ under Assumption \ref{assumption:connected_graph}. From the Perron-Frobenius Theorem \cite{horn1990matrix} follows that there is a real eigenvalue $\lambda_1(R)$ of the matrix $R$ which equals the spectral radius $\rho(R)$ and that the principal eigenvector $x_1$ is positive.
 
\section{Proof of Lemma 2}
The proof is analogous to the proof in \cite[Theorem 5 and Lemma 9]{van2009virus}. From the steady-state equation (\ref{steady_state_componentwise}), we obtain that 
\begin{align*} 
v_{\infty, i}\left( q_i + \sum^N_{j=1} w_{ij} v_{\infty, j}  \right) = \sum^N_{j=1} w_{ij} v_{\infty, j}. 
\end{align*}
Hence, it holds that
\begin{align*} 
v_{\infty, i}= \frac{\sum^N_{j=1} w_{ij} v_{\infty, j}}{ q_i + \sum^N_{j=1} w_{ij} v_{\infty, j}  }, 
\end{align*}
which equals
\begin{align} \label{kbhjkhbssss}
v_{\infty, i}= 1 - \frac{q_i}{ q_i + \sum^N_{j=1} w_{ij} v_{\infty, j}  } .
\end{align}
Since $v_{\infty, j} \le 1$ for every node $j$, we obtain an upper bound on the steady-state $v_{\infty, i}$ of node $i$ as
\begin{align*} 
v_{\infty, i} \le 1 - \frac{q_i}{ q_i + \sum^N_{j=1} w_{ij} } .
\end{align*}
We denote the minimum of the steady-state vector by
\begin{align*}
v_{\infty, \textrm{min}} = \textrm{min}\{v_{\infty, 1}, ..., v_{\infty, N}\}.
\end{align*}
Theorem \ref{theorem:parestability} implies that $v_{\infty, \textrm{min}} >0$. Assuming that the minimum $v_{\infty, \textrm{min}}$ occurs at node $i$, we obtain from (\ref{kbhjkhbssss}) that 
\begin{align*} 
v_{\infty, \textrm{min}}&= 1 - \frac{q_i}{ q_i + \sum^N_{j=1} w_{ij} v_{\infty, j}  }  \ge 1 - \frac{q_i}{ q_i + v_{\infty, \textrm{min}} \sum^N_{j=1} w_{ij} }.
\end{align*}
Hence, it holds
\begin{align*} 
v_{\infty, \textrm{min}}\ge \frac{v_{\infty, \textrm{min}} \sum^N_{j=1} w_{ij}}{ q_i + v_{\infty, \textrm{min}} \sum^N_{j=1} w_{ij} },
\end{align*}
from which we obtain that 
\begin{align*} 
 v_{\infty, \textrm{min}}  \ge 1 - \frac{q_i}{\sum^N_{j=1} w_{ij}}.
\end{align*}

\section{Proof of Proposition 1}
Since $\Delta v_i[k+1]= v_i[k+1] -v_{\infty, i}$, the evolution of the difference $\Delta v_i[k]$ over time $k$ can be stated with the NIMFA equations (2) as
\begin{equation}\label{delta_vi_ss}
\Delta v_i[k+1] = (1 - q_i) v_i[k] + \sum^N_{j=1} w_{ij}v_j[k] - v_i[k] \sum^N_{j=1} w_{ij}v_j[k]  - v_{\infty, i}.
\end{equation}
We would like to express the difference $\Delta v_i[k+1]$ at the next time $k+1$ only in dependency of the difference $\Delta v[k]$ at the current time $k$ and the constant steady state $v_\infty$. The steady state $v_\infty$ is given by (\ref{steady_state_componentwise}) and satisfies
\begin{align} \label{steadyState_f}
v_{\infty ,i} = (1 - q_i)v_{\infty, i} + (1 - v_{\infty,i}) \sum^N_{j=1} w_{ij} v_{\infty, j},
\end{align}
for all nodes $i$. We insert (\ref{steadyState_f}) in (\ref{delta_vi_ss}) and obtain
\begin{align} 
\Delta v_i[k+1] = &(1 - q_i) v_i[k] + \sum^N_{j=1} w_{ij}v_j[k] - v_i[k] \sum^N_{j=1} w_{ij}v_j[k] \nonumber \\
 &-(1 - q_i)v_{\infty, i} -  \sum^N_{j=1} w_{ij} v_{\infty, j} + v_{\infty, i}\sum^N_{j=1} w_{ij} v_{\infty, j}. \label{oijoijoijo}
  \end{align}
Since $\Delta v_i[k] = v_i[k]- v_{\infty,i}$, we can express (\ref{oijoijoijo}) more compactly as
\begin{equation} \label{eqljknlnklsdgf}
\Delta v_i[k+1] = (1 - q_i)\Delta v_i[k]+ \sum^N_{j=1} w_{i j} \Delta v_j[k] -  \sum^N_{j=1} w_{i j} \left( v_i[k]v_j[k] - v_{\infty, i} v_{\infty, j} \right).
\end{equation}
The first two addends in (\ref{eqljknlnklsdgf}) are already in the desired form: they depend on the difference $\Delta v[k]$ but not on the viral state $v[k]$ at time $k$. To replace the viral state $v[k]$ in the last term of (\ref{eqljknlnklsdgf}) by an expression of the difference $\Delta v[k]$, we observe that
\begin{equation} \label{sljknkljnd}
 v_i[k]v_j[k] - v_{\infty, i} v_{\infty, j}=  \Delta v_i[k] \Delta v_j[k] +  \Delta v_i[k] v_{\infty, j}   + v_{\infty, i}  \Delta v_j[k], 
\end{equation}
since $v_i[k] = \Delta v_i[k]+ v_{\infty,i}$. Inserting (\ref{sljknkljnd}) in (\ref{eqljknlnklsdgf}) yields 
\begin{align} 
\Delta v_i[k+1] = &\left( 1 - q_i  - \sum^N_{j=1} w_{i j}  v_{\infty, j} \right)\Delta v_i[k] \nonumber\\
&+ (1- v_{\infty, i} )  \sum^N_{j=1} w_{i j} \Delta v_j[k]  - \Delta v_i[k] \sum^N_{j=1}w_{i j}  \Delta v_j[k].   \label{ljnljjjjsssafff}
\end{align}
The expression (\ref{ljnljjjjsssafff}) can be further simplified. The steady-state equation (\ref{steady_state_componentwise}) is equivalent to  
\begin{align} \label{stads}
\sum^N_{j=1} w_{ij} v_{\infty, j} = q_i \left( \frac{1}{1 - v_{\infty, i}} -1\right).
\end{align}
From (\ref{stads}) follows that (\ref{ljnljjjjsssafff}) is equivalent to 
\begin{equation} \label{delta_vi_single}
\Delta v_i[k+1] = \left( 1  + \frac{q_i}{ v_{\infty, i} - 1}\right) \Delta v_i[k]+ (1- v_{\infty, i} )  \sum^N_{j=1} w_{i j} \Delta v_j[k]  -  \Delta v_i[k] \sum^N_{j=1}w_{i j}  \Delta v_j[k] .
\end{equation}
Stacking equation (\ref{delta_vi_single}) for all nodes $i = 1, ..., N$ completes the proof.

\section{Proof of Lemma 3}
We consider the elements of the matrix $F$. For $i \neq j$ it holds that
\begin{align*}
F_{ij} = \left(1 - v_{\infty, i}\right) w_{ij} \ge 0,
\end{align*}
since $ v_{\infty, i} \le 1$ and $w_{ij} \ge 0$. The diagonal elements of the matrix $F$ equal
\begin{align*}
F_{ii}= 1 + \frac{q_i}{v_{\infty, i} -1} + \left(1 - v_{\infty, i}\right) w_{ii}, \quad i=1, ..., N.
\end{align*} 
Since $w_{ii} \ge 0$, we obtain that
\begin{align}\label{khbikasss}
F_{ii} \ge 1 + \frac{q_i}{v_{\infty, i} -1}.
\end{align}
We proceed the proof of Lemma \ref{lemma:nonnegativity} by showing that the right hand side of (\ref{khbikasss}) is non-negative, i.e. by showing that
\begin{align*}
1 + \frac{q_i}{v_{\infty, i} -1} \ge 0,
\end{align*}
which is equivalent to 
\begin{align} \label{eq:erasff}
v_{\infty, i} \le 1 - q_i.
\end{align}
By using the upper bound on the viral state $v_{\infty, i}$ provided by Lemma \ref{lemma:v_infty_bound}, we obtain that a sufficient condition for (\ref{eq:erasff}) is
\begin{align*} 
1 - \frac{q_i}{ q_i + \sum^N_{j=1} w_{ij} } \le 1 - q_i,
\end{align*}
which is equivalent to 
\begin{align*} 
 q_i + \sum^N_{j=1} w_{ij}  \le  1.
 \end{align*}
From $q_i = \delta_i T$ and $w_{ij} = \beta_{ij} T$, we finally obtain that 
\begin{align*}
T \le \frac{1}{\delta_i +  \sum^N_{j=1} \beta_{ij}},
\end{align*} 
which holds true under Assumption \ref{assumption:sampling_time}, is a sufficient condition for $F_{ii} \ge 0$.

\section{Proof of Corollary 1}
We rewrite equation (\ref{ljnljjjjsssafff}) to obtain
\begin{align} \label{lkjnkja}
\Delta v_i[k+1] & = g_i[k] + h_i[k] \Delta v_i[k], 
\end{align}
where the terms $g_i[k]$ and $h_i[k]$ are given by 
\begin{align}\label{definition_gi}
g_i[k] = (1- v_{\infty, i} )  \sum^N_{j=1} w_{i j} \Delta v_j[k]    
\end{align}
and
\begin{align*}
h_i[k] =  1 - q_i  - \sum^N_{j=1}w_{i j}  \left( v_{\infty, j} +\Delta v_j[k]\right)
\end{align*}
for every node $i$. Since $w_{i j} \ge 0$ and $(1- v_{\infty, i} ) \ge 0$, the definition (\ref{definition_gi}) of $g_i[k]$ shows that
\begin{align*}
\Delta v_j[k] \le 0 ~ \forall j \Rightarrow g_i[k]\le 0.
\end{align*}
Furthermore, by the definition of $\Delta v_j[k]=  v_j[k]- v_{\infty, j}$ and since $ v_j[k] \in [0,1]$, it holds that
\begin{align} \label{klkjnlljnklnnlkn}
h_i[k] = 1 - q_i  -  \sum^N_{j=1}w_{i j}  v_j[k] \ge 1 - q_i -\sum^N_{j=1}w_{i j}.
\end{align}
Assumption \ref{assumption:sampling_time} states that $q_i + \sum^N_{j=1}w_{i j}\le 1$, and, hence, (\ref{klkjnlljnklnnlkn}) implies that $h_i[k] \ge 0$. From $h_i[k] \ge 0$ and $g_i[k] \le 0$ if $\Delta v_i[k] \le 0$ for all nodes $i$ and (\ref{lkjnkja}) follows that: $\Delta v_i[k] \le 0$ for all nodes $i$ implies $\Delta v_i[k+1] \le 0$ for all nodes $i$. Hence, we obtain by induction that $\Delta v_i[1] \le 0$ for all nodes $i$ implies $\Delta v_i[k] \le 0$ for all nodes $i$ at every time $k\ge 1$, which proves Corollary 1. (Analogously, we can prove that $\Delta v_i[0] \ge 0$ for all nodes $i$ implies $\Delta v_i[k] \ge 0$ for all nodes $i$ at every time $k\ge 1$.)

\section{Proof of Theorem 2}
The discrete-time NIMFA system (\ref{NIMFA_disc_stacked}) is asymptotically stable at the steady-state $v_\infty$ if the linearisation (\ref{eq:LTI_sys_NIMFA_ss}) at $\Delta v[k] = 0$ is stable \cite{khalil1996nonlinear}. The LTI system (\ref{eq:LTI_sys_NIMFA_ss}) is stable if the magnitudes of all the eigenvalues of its $N\times N$ system matrix $F$ are smaller than one, which is equivalent to $\rho(F)<1$ by the definition of the spectral radius $\rho(F)$. Lemma \ref{lemma:nonnegativity} states that the matrix $F$ is non-negative. Hence, the spectral radius $\rho(F)$ is upper bounded by \cite[Theorem 8.1.26.]{horn1990matrix}
\begin{align}\label{kjnbkjas}
\rho(F) \le \underset{i=1, ..., N}{\textrm{max}} \quad \frac{1}{y_i} \sum^N_{j=1} F_{ij} y_j
\end{align}
for any $N\times 1$ vector $y >0$. It holds $v_\infty> 0$, and by setting $y=v_\infty$, we obtain from (\ref{kjnbkjas}) that
\begin{align}\label{khjbkss}
\rho(F) \le \underset{i=1, ..., N}{\textrm{max}} \quad \frac{1}{v_{\infty, i}} \sum^N_{j=1} F_{ij} v_{\infty, j}.
\end{align}
From the definition (\ref{matrix_F}) of the matrix $F$ follows that
\begin{align*}
 \sum^N_{j=1} F_{ij} v_{\infty, j} &= v_{\infty, i} -  q_i \frac{ v_{\infty, i}}{ 1 -v_{\infty, i}}  +  \left(1- v_{\infty, i} \right) \sum^N_{j=1} w_{ij} v_{\infty, j} \\
 &= v_{\infty, i} -  q_i \frac{ v_{\infty, i}}{ 1 -v_{\infty, i}}  +   q_i v_{\infty, i},
\end{align*}
where the last equality follows from the steady-state equation (\ref{steady_state_componentwise}). Thus, the upper bound (\ref{khjbkss}) on the spectral radius $\rho(F)$ becomes
\begin{align*}
\rho(F) \le \underset{i=1, ..., N}{\textrm{max}} \quad 1 - q_i \frac{ v_{\infty, i} }{ 1 -v_{\infty, i}} < 1,
\end{align*}
since $q_i>0$ and $v_{\infty, i}>0$ for every node $i$. From $\rho(F)<1$ follows that the linearisation (\ref{eq:LTI_sys_NIMFA_ss}) is stable, and, hence, the discrete-time NIMFA system (\ref{NIMFA_disc_}) is asymptotically stable at the steady-state $v_\infty$.

\section{Proof of Lemma 4}
Since $\Delta v[k] = v[k] - v_\infty$, the viral state $v[k]$ is strictly increasing at time $k$ if and only if the difference $\Delta v[k]$ is strictly increasing at time $k$. Thus, it holds that 
\begin{align}\label{ljnkjndddd}
\Delta v_j[k] > \Delta v_j[k-1], \quad j= 1,..., N,
\end{align}
since the viral state $v[k-1]$ is assumed to be strictly increasing at time $k-1$. From Proposition \ref{proposition:delta_v_equation} follows that
\begin{align} \label{kljnkjbnkaasss}
\Delta v_i[k+1] - \Delta v_i[k] = &\sum^N_{j = 1} F_{ij} \left(\Delta v_j[k] - \Delta v_j[k-1] \right) \nonumber \\
 &+ \sum^N_{j=1} w_{ij}\left(\Delta v_i[k-1]\Delta v_j[k-1] - \Delta v_i[k] \Delta v_j[k]\right). 
\end{align}
As stated by Lemma \ref{lemma:nonnegativity}, the matrix $F$ is non-negative under Assumption \ref{assumption:sampling_time}. Thus, we obtain from $F_{ij}\ge 0$ and (\ref{ljnkjndddd}) that the first sum in (\ref{kljnkjbnkaasss}) is positive. Regarding the second sum in (\ref{kljnkjbnkaasss}), we observe that 
\begin{equation} \label{kljnljknlkasss}
\Delta v_i[k-1]\Delta v_j[k-1] - \Delta v_i[k] \Delta v_j[k]> \Delta v_i[k-1]\Delta v_j[k-1] - \Delta v_i[k] \Delta v_j[k-1] 
\end{equation}
due to (\ref{ljnkjndddd}) and since $\Delta v_i[k]\le 0$ holds for every node $i$ under Assumption \ref{assumption:initial_state} as stated by Corollary \ref{corollary:below_steady_state}. With (\ref{ljnkjndddd}) and $\Delta v_i[k]\le 0$ for every node $i$, we obtain from (\ref{kljnljknlkasss}) that
\begin{align*} 
\Delta v_i[k-1]\Delta v_j[k-1] - \Delta v_i[k] \Delta v_j[k] >0.
\end{align*}
Hence, since $w_{ij}\ge 0$ for every nodes $i, j$, both sums in (\ref{kljnkjbnkaasss}) are positive, which implies that $\Delta v_i[k+1] > \Delta v_i[k]$ for every node $i$.

\section{Proof of Theorem 3}
Lemma \ref{lemma:monotonicity_induction} states that $v[k+1] > v[k]$ implies $v[k+2] > v[k+1]$ for any time $k$. Thus, by showing that $v[2]>v[1]$, we show (inductively) that $v[k+1] > v[k]$ for every time $k$. We prove the two statements of Theorem \ref{theorem:monotonicity} in Subsection \ref{subsec:statement1} and Subsection \ref{subsec:statement2}, respectively.
\subsection{Statement 1}\label{subsec:statement1}
From the NIMFA equations (\ref{NIMFA_disc_}) follows that
\begin{align*}
v_i[2] - v_i[1] = - q_i v_i[1] + (1 - v_i[1])  \sum^N_{j=1} w_{i j} v_j[1].
\end{align*}
Hence, $v[2] > v[1]$ is equivalent to
\begin{align}\label{ooopipq}
 \sum^N_{j=1} w_{i j} v_j[1] >  q_i \frac{v_i[1]}{1 - v_i[1]}, \quad i=1, ..., N.
\end{align}
With the geometric series, we can write
\begin{align*}
\frac{v_i[1]}{1 - v_i[1]} = \sum^\infty_{l = 1} v^l_i[1],
\end{align*}
which yields that (\ref{ooopipq}) is equivalent to
\begin{align}\label{kljhnkjhoisss}
 \sum^N_{j=1} w_{i j} v_j[1] >  q_i\sum^\infty_{l = 1} v^l_i[1], \quad i=1, ..., N.
\end{align}
We stack (\ref{kljhnkjhoisss}) and obtain 
\begin{align}\label{ksssaaaljhnkjhoisss}
 W v[1] >  \textrm{diag}(q)\sum^\infty_{l = 1} v^l[1],
\end{align}
where we denote $v^l[1] = (v^l_1[1], ..., v^l_N[1])^T$. By subtracting $\textrm{diag}(q)v[1]$ on both sides of (\ref{ksssaaaljhnkjhoisss}), we obtain the first statement of Theorem \ref{theorem:monotonicity}.

\subsection{Statement 2} \label{subsec:statement2}
We obtain the second statement of Theorem \ref{theorem:monotonicity} by considering when $\Delta v[2] > \Delta v[1]$ holds, which is equivalent to $v[2]>v[1]$. With Proposition \ref{proposition:delta_v_equation}, it holds for node $i$ that 
\begin{equation*}
\Delta v_i[2] - \Delta v_i[1] = \frac{q_i}{ v_{\infty, i} - 1} \Delta v_i[1]  + (1- v_{\infty, i}) \sum^N_{j=1} w_{ij}\Delta v_j[1]-  \Delta v_i[1]  \sum^N_{j=1} w_{ij}\Delta v_j[1].
\end{equation*}
Thus, $\Delta v[2] > \Delta v[1]$ holds if and only if 
\begin{align} \label{kjbnkssssjbksdss} 
 (1- v_{\infty, i}-\Delta v_i[1] ) \sum^N_{j=1} w_{ij}\Delta v_j[1]  > \frac{q_i}{ 1- v_{\infty, i}} \Delta v_i[1]
\end{align}
for every node $i=1,..., N$. The inequality (\ref{kjbnkssssjbksdss}) is equivalent to
\begin{align}  \label{kjbnkjbksdss}
  \sum^N_{j=1} w_{ij}\Delta v_j[1]  >  \frac{q_i}{ 1- v_{\infty, i}} \frac{\Delta v_i[1]}{1- v_{\infty, i}-\Delta v_i[1] }. 
\end{align}
We rewrite the right-hand side of (\ref{kjbnkjbksdss}) to obtain
\begin{align} 
  \sum^N_{j=1} w_{ij}\Delta v_j[1]  > \frac{q_i}{ 1- v_{\infty, i}} \dfrac{\dfrac{\Delta v_i[1]}{ 1- v_{\infty, i}}}{1-\dfrac{\Delta v_i[1]}{ 1- v_{\infty, i}} }  =  \frac{q_i}{1- v_{\infty, i}} \sum^\infty_{l=1} \left(\frac{\Delta v_i[1]}{ 1- v_{\infty, i}}\right)^l, \label{kkkkjjjaaauuuisisisi}
  \end{align}
  where the equality follows from rewriting the geometric series. We introduce $z_i = \Delta v_i[1]/\left( 1- v_{\infty, i}\right)$ for every node $i$, and we obtain from (\ref{kkkkjjjaaauuuisisisi}) that $v[k+1] > v[k]$ is equivalent to 
  \begin{align*}  
  \sum^N_{j=1} w_{ij} \left(1- v_{\infty, j}\right) z_j > \frac{q_i}{1- v_{\infty, i}} \sum^\infty_{l=1} z^l_i, \quad  i=1, ..., N.
  \end{align*}
  We bring the first-order terms on the left-hand side to obtain the equivalent statement
  \begin{align}  \label{kjbkkkssa}
 \left(1- v_{\infty, i}\right)  \sum^N_{j=1} w_{ij} \left(1- v_{\infty, j}\right) z_j -q_i z_i  > q_i   \sum^\infty_{l=2} z^l_i,
  \end{align}
  where $i=1, ..., N$. Stacking (\ref{kjbkkkssa}) yields
  \begin{equation*}  
 \left(\textrm{diag}\left(u - v_{\infty}\right) W  \textrm{diag}\left(u - v_{\infty}\right) -\textrm{diag}(q)\right) z  >  \textrm{diag}(q) \sum^\infty_{l=2} z^l ,
  \end{equation*}
  which completes the proof of the second statement of Theorem \ref{theorem:monotonicity}.
  
\section{Proof of Corollary 2}
We prove Corollary \ref{corollary_monotonicity_12} for the two different initial viral states $v[1]$ in Subsection \ref{subsec:cor_statement1} and Subsection \ref{subsec:cor_statement2}, respectively.

\subsection{First Statement} \label{subsec:cor_statement1}
The initial state is given by
\begin{align*} 
v[1] = \epsilon x_1 + \eta,
\end{align*}  
where the $N\times 1$ vector $\eta$ satisfies $\lVert \eta \rVert_2 = \mathcal{O}(\epsilon^p)$ with $p>1$. By the definition of the principal eigenvector $x_1$, we obtain that
\begin{align*}
 R v[1] = \rho(R) \epsilon x_1 + R\eta.
\end{align*}
Thus, we obtain that
\begin{align} \label{kljnkjdddddd}
 R v[1] = \rho(R) v[1] + \left( R -  \rho(R) I\right)\eta.
\end{align}
We add $v[1]$ on both sides of the inequality of the first statement of Theorem \ref{theorem:monotonicity}, which yields that the viral state $v[k]$ is globally strictly increasing if and only if 
\begin{align*}
 \left( I + W - \textup{\textrm{diag}}(q)\right) v[1] > v[1] +  \textup{\textrm{diag}}(q)\sum^\infty_{l = 2} v^l[1], 
\end{align*}
which simplifies to 
\begin{align}\label{khjbhbia}
R v[1] > v[1] +  \textup{\textrm{diag}}(q)\sum^\infty_{l = 2} v^l[1].
\end{align}
With (\ref{kljnkjdddddd}), we obtain from (\ref{khjbhbia}) that the viral state $v[k]$ is globally strictly increasing if
\begin{align*}
\rho(R) v[1] + \left( R -  \rho(R) I\right)\eta > v[1] +  \textup{\textrm{diag}}(q)\sum^\infty_{l = 2} v^l[1],
\end{align*}
which is equivalent to
\begin{align}\label{inequilknlss}
\left(\rho(R) -1\right) v[1] > \left( \rho(R) I - R\right)\eta + \textup{\textrm{diag}}(q)\sum^\infty_{l = 2} v^l[1].
\end{align}
Since $\rho(R) >1$ and $v[1]>0$, the left-hand side of (\ref{inequilknlss}) is positive and in $\mathcal{O}(\epsilon)$, and the right-hand side of (\ref{inequilknlss}) is in $\mathcal{O}(\epsilon^p)$ with $p>1$. Hence, there is an $\epsilon>0$ such that (\ref{inequilknlss}) holds true.

\subsection{Second Statement} \label{subsec:cor_statement2}
The initial state is given by
\begin{align} \label{hhsaia}
v[1] = (1-\epsilon)v_{\infty} + \eta,
\end{align}  
where the $N\times 1$ vector $\eta$ satisfies $\lVert \eta \rVert_2 = \mathcal{O}(\epsilon^p)$ with $p>1$. With (\ref{hhsaia}), we obtain the $i$-th component of the vector $z$ in Theorem \ref{theorem:monotonicity} as
\begin{align} \label{kjbnmkjnbddd}
z_i = \frac{-\epsilon v_{\infty, i} + \eta_i }{1 - v_{\infty, i}}.
\end{align}
Then, with (\ref{kjbnmkjnbddd}), the inequality in the second statement of Theorem \ref{theorem:monotonicity} becomes
\begin{equation*}
\left( 1 - v_{\infty, i} \right) \sum^N_{j=1} w_{ij} \left(-\epsilon v_{\infty, j} + \eta_j\right)- q_i \frac{-\epsilon v_{\infty, i} + \eta_i }{1 - v_{\infty, i}} > q_i\sum^\infty_{l = 2} z^l_i
\end{equation*}
for every node $i=1, ..., N$. We rearrange and obtain
  \begin{equation}    \label{ppoqqyue}  
-\epsilon \left( 1 - v_{\infty, i} \right) \sum^N_{j=1} w_{ij}  v_{\infty, j}+ \epsilon q_i \frac{ v_{\infty, i} }{1 - v_{\infty, i}} > -\left( 1 - v_{\infty, i} \right) \sum^N_{j=1} w_{ij}  \eta_j+ q_i \frac{\eta_i }{1 - v_{\infty, i}} + q_i\sum^\infty_{l = 2} z^l_i, 
  \end{equation}  
for every node $i$. We rewrite the sum on the left-hand side of (\ref{ppoqqyue}) by using the steady-state equation (\ref{steady_state_componentwise}), which yields
  \begin{equation*}    
-\epsilon q_i v_{\infty, i} + \epsilon q_i \frac{ v_{\infty, i} }{1 - v_{\infty, i}} > -\left( 1 - v_{\infty, i} \right) \sum^N_{j=1} w_{ij}  \eta_j+ q_i \frac{\eta_i }{1 - v_{\infty, i}} + q_i\sum^\infty_{l = 2} z^l_i, 
  \end{equation*}  
  which simplifies to
  \begin{equation}    \label{kjlnkjnnssddd}
\epsilon q_i \frac{ v^2_{\infty, i} }{1 - v_{\infty, i}} > -\left( 1 - v_{\infty, i} \right) \sum^N_{j=1} w_{ij}  \eta_j+ q_i \frac{\eta_i }{1 - v_{\infty, i}} + q_i\sum^\infty_{l = 2} z^l_i.
  \end{equation}
  The left-hand side of (\ref{kjlnkjnnssddd}) is positive and in $\mathcal{O}(\epsilon)$, and the right-hand side of (\ref{kjlnkjnnssddd}) is in $\mathcal{O}(\epsilon^p)$. Hence, there is an $\epsilon >0$ such that the inequality (\ref{kjlnkjnnssddd}) holds true.

\section{Proof of Theorem 4}
Before giving a rigorous proof of the statement of Theorem \ref{theorem:soft_monotonicity}, we give an intuitive explanation. If the initial viral state $v[1]$ is close to zero, then the NIMFA model (\ref{NIMFA_disc_stacked}) is accurately described by its linearisation (\ref{eq:LTI_sys_NIMFA_zero}) around the origin. The viral state $v[k]$ of the LTI system (\ref{eq:LTI_sys_NIMFA_zero}) converges quickly to the principal eigenvector $x_1$ of the system matrix $R$. If the viral state $v[k^*]$ at some time $k^*\ge 1$ is small and almost parallel to the principal eigenvector $x_1$, then it follows from Corollary \ref{corollary_monotonicity_12} that $v[k]$ is strictly increasing at every time $k \ge k^*$.
 
If existent, we denote by $k^*$ the time from which on the viral state $v[k]$ is strictly increasing, i.e. $v[k+1]>v[k]$ for every time $k\ge k^*$. To find an expression for the time $k^*$, we obtain from Theorem \ref{theorem:monotonicity} that the viral state $v[k]$ is increasing at every time $k\ge k^*$ if and only if
\begin{align} \label{ssjhbuuu}
 R v[k^*] > v[k^*] +  \textup{\textrm{diag}}(q)\sum^\infty_{l = 2} v^l[k^*],
\end{align}  
 which follows from adding the viral state $v[k^*]$ at time $k^*$ on both sides of (\ref{monotonicity_first_sta}). We obtain an approximation the viral state $v[k^*]$ at time $k^*$ from the linearisation (\ref{eq:LTI_sys_NIMFA_zero}) of the NIMFA model (\ref{NIMFA_disc_stacked}) around the origin. First, we decompose the matrix $R$ into two addends
\begin{align*}
R = \rho(R)x_1 x^T_1 + B.
\end{align*}
Here, the $N \times N$ matrix $B$ is given by
\begin{align*}
B = R - \rho(R)x_1 x^T_1,
\end{align*}
and it holds that $B x_1 = 0$. Then, the linearisation (\ref{eq:LTI_sys_NIMFA_zero}) yields
\begin{align} \label{linear_one_step}
v[k+1] \approx  R v[k] = \rho(R)x_1 x^T_1 v[k] + B v[k] .
\end{align}
After iterating (\ref{linear_one_step}), the viral state $v[k^*]$ at time $k^*\ge 1$ follows as
\begin{align} \label{kjbnssss}
v[k^*] = R^{k^* - 1} v[1] + \eta[k^*],
\end{align}
where the linearisation error vector $\eta[k^*]$ is in $\lVert \eta[k^*] \rVert_2 = \mathcal{O}\left(\lVert v[1] \rVert^2_2\right)$ for any fixed time $k^*$ when $v[1] \rightarrow 0$. We rewrite (\ref{kjbnssss}) as
\begin{align} \label{asddddaww}
v[k^*] =  \rho(R)^{ k^* - 1 } \left(x^T_1 v[1]\right) x_1 + B^{ k^* - 1 }v[1] + \eta[k^*].
\end{align}
By inserting (\ref{asddddaww}) in (\ref{ssjhbuuu}), we obtain that the viral state $v[k]$ is strictly increasing at every time $k \ge k^*$ if 
\begin{equation} \label{ljnssssaa}
\rho(R)^{ k^*  } x_1 \left(x^T_1 v[1]\right) + B^{ k^* }v[1]> \rho(R)^{ k^* - 1 } x_1 \left(x^T_1 v[1]\right) + B^{ k^* - 1 }v[1] +  \Upsilon[k^*].
\end{equation}
Here, the terms that are non-linear with respect to the initial viral state $v[1]$ are contained in the $N\times 1$ vector $\Upsilon[k^*]$, which equals
\begin{align*}
\Upsilon[k^*] = \textup{\textrm{diag}}(q)\sum^\infty_{l = 2} v^l[k^*] + (I - R) \eta[k^*].
\end{align*}
It holds that $\lVert \Upsilon[k^*] \rVert_2 = \mathcal{O}\left(\lVert v[1] \rVert^2_2\right)$ for any fixed time $k^*$ when $v[1] \rightarrow 0$. We rearrange (\ref{ljnssssaa}), which yields that
\begin{equation} \label{khbsssss}
   \rho(R)^{ k^* - 1 } \left( \rho(R) - 1 \right)\left(x^T_1 v[1]\right) x_1> B^{ k^* - 1 } \left( I - B \right)v[1] + \Upsilon[k^*].
\end{equation}
To obtain a bound on the time $k^*$ from (\ref{khbsssss}), we state Lemma \ref{lemma:left_side} and Lemma \ref{lemma:right_side}, which bound the left and right side of (\ref{khbsssss}), respectively.
\begin{lemma}\label{lemma:left_side}
Suppose that Assumption \ref{assumption:spreading_parameters}--\ref{assumption:above_threshold} hold. Then, it holds that
\begin{equation*} 
\left(\rho(R)^{ k^* - 1 } \left( \rho(R) - 1 \right)\left(x^T_1 v[1]\right) x_1 \right)_i \ge \rho(R)^{ k^* - 1 } \left( \rho(R) - 1 \right)x^2_{1, \textup{\textrm{min}}} \lVert v[1]\rVert_1
\end{equation*}
 for every $i=1, ..., N$, where $x_{1, \textup{\textrm{min}}} = \textup{\textrm{min}}\{\left(x_1\right)_1, ..., \left(x_1\right)_N\}$.
\end{lemma}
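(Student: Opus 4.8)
The plan is to strip away the common positive prefactor and reduce the claim to a purely elementwise inequality. I first observe that under Assumption \ref{assumption:above_threshold} we have $\rho(R)>1$, so both $\rho(R)^{k^*-1}>0$ and $\rho(R)-1>0$. These two scalars appear identically on both sides of the asserted inequality, so I may divide them out. The claim is therefore equivalent to the factor-free statement
\begin{equation*}
\left(x^T_1 v[1]\right)(x_1)_i \ge x^2_{1, \textup{\textrm{min}}}\, \lVert v[1]\rVert_1,
\end{equation*}
which I now establish for each fixed node $i$.

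Next I collect the relevant sign information. Lemma \ref{lemma:R_irreducible} guarantees that the principal eigenvector $x_1$ is positive, so $x_{1, \textup{\textrm{min}}}=\textrm{min}_j (x_1)_j>0$ and $(x_1)_j \ge x_{1, \textup{\textrm{min}}}$ for every $j$. Assumption \ref{assumption:initial_state} gives $v_j[1]\ge 0$ for every node $j$, which implies $\lVert v[1]\rVert_1 = \sum^N_{j=1} v_j[1]$. With these facts I bound the inner product from below by
\begin{equation*}
x^T_1 v[1] = \sum^N_{j=1}(x_1)_j v_j[1] \ge x_{1, \textup{\textrm{min}}} \sum^N_{j=1} v_j[1] = x_{1, \textup{\textrm{min}}}\, \lVert v[1]\rVert_1,
\end{equation*}
where the inequality uses $(x_1)_j\ge x_{1, \textup{\textrm{min}}}$ together with $v_j[1]\ge 0$. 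Separately, $(x_1)_i \ge x_{1, \textup{\textrm{min}}}$ holds by the definition of $x_{1, \textup{\textrm{min}}}$.

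Finally, since both factors $x^T_1 v[1]$ and $(x_1)_i$ are nonnegative, I multiply the two lower bounds to obtain $\left(x^T_1 v[1]\right)(x_1)_i \ge x^2_{1, \textup{\textrm{min}}}\,\lVert v[1]\rVert_1$, which is the reduced claim. Reinstating the positive prefactor $\rho(R)^{k^*-1}(\rho(R)-1)$ yields the statement of the lemma. I do not anticipate any genuine obstacle here: the only point requiring care is the bookkeeping of signs, namely ensuring that the prefactor is strictly positive (so that division preserves the inequality direction) and that the elementwise bounds combine in the correct orientation, both of which follow directly from $\rho(R)>1$, the positivity of $x_1$, and the nonnegativity of $v[1]$.
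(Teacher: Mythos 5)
Your proof is correct and follows essentially the same route as the paper: both bound $x^T_1 v[1] \ge x_{1,\textup{\textrm{min}}} \lVert v[1]\rVert_1$ using $v[1]\ge 0$ and the positivity of $x_1$ from Lemma \ref{lemma:R_irreducible}, then apply $(x_1)_i \ge x_{1,\textup{\textrm{min}}}$ and the positivity of the prefactor $\rho(R)^{k^*-1}(\rho(R)-1)$. The only cosmetic difference is that you divide out the prefactor first and reinstate it at the end, whereas the paper carries it through the inequalities.
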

\begin{proof}
 It holds
\begin{align} \label{ljnkljnkjnksdsss}
x^T_1 v[1] \ge  x_{1, \textrm{min}} \sum^N_{i=1}v_i[1]  = x_{1, \textrm{min}} \lVert v[1]\rVert_1,
\end{align}
since $v[1]\ge 0$ and $x_{1, \textrm{min}}>0$, since the principal eigenvector satisfies $x_1 >0$ by Lemma \ref{lemma:R_irreducible}. With (\ref{ljnkljnkjnksdsss}), the $i$-th component of the left-hand side of (\ref{khbsssss}) becomes 
\begin{equation*} 
\left(\rho(R)^{ k^* - 1 } \left( \rho(R) - 1 \right)\left(x^T_1 v[1]\right) x_1 \right)_i \ge \rho(R)^{ k^* - 1 } \left( \rho(R) - 1 \right)x_{1, \textrm{min}} \lVert v[1]\rVert_1 \left( x_1 \right)_i.
\end{equation*}
since $\left( \rho(R) - 1 \right) > 0$. By employing the lower bound $\left( x_1 \right)_i  \ge x_{1, \textrm{min}}$, we have proved Lemma \ref{lemma:left_side}.
\end{proof}
For completeness, we introduce Lemma \ref{lemma:matrix_norm_spectral_radius}, which is from \cite[Corollary 5.6.13.]{horn1990matrix} and applied in the proof of Lemma \ref{lemma:right_side}.
\begin{lemma}[\cite{horn1990matrix}]\label{lemma:matrix_norm_spectral_radius}
Let an $N\times N$ matrix $M$ and an $\varepsilon>0$ be given. Then, there is a constant $C(M, \varepsilon)$ such that 
\begin{align*}
\left(M^k\right)_{ij} \le C(M, \varepsilon) \left( \rho(M) + \varepsilon \right)^k
\end{align*}
for all $k=1, 2, ...$ and all $i,j=1, ..., N$.
\end{lemma}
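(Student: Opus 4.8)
The plan is to reduce the entrywise estimate to a matrix-norm estimate and then invoke the equivalence of all norms on the finite-dimensional space $\mathbb{R}^{N\times N}$. The single nontrivial ingredient is the classical fact that the spectral radius is the infimum of the induced operator norms of $M$: for the prescribed $\varepsilon>0$ there exists a submultiplicative matrix norm $\lVert \cdot \rVert_*$, induced by a suitable vector norm, with $\lVert M \rVert_* \le \rho(M)+\varepsilon$. Everything else is bookkeeping.

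First I would construct this auxiliary norm. Writing $M$ in Jordan canonical form $M = SJS^{-1}$, I would apply the diagonal scaling $D_t = \textrm{diag}(1, t, t^2, \ldots, t^{N-1})$ so that the superdiagonal ones of $J$ become $t$ in $D_t^{-1} J D_t$. Each row of $D_t^{-1} J D_t$ then carries one eigenvalue on the diagonal and at most one off-diagonal entry equal to $t$, so its maximum absolute row sum is at most $\rho(M)+t$. Choosing any $t \le \varepsilon$ and defining the vector norm $\lVert x \rVert_* = \lVert (S D_t)^{-1} x \rVert_\infty$, the induced operator norm of $M$ equals the $\infty$-operator norm of $(S D_t)^{-1} M (S D_t) = D_t^{-1} J D_t$, which is bounded by $\rho(M)+\varepsilon$. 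This step is the heart of the matter and the only place where genuine work is required; it is precisely the standard lemma on matrix norms close to the spectral radius that underlies the cited Corollary 5.6.13.

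Next I would use submultiplicativity to propagate the bound to powers: $\lVert M^k \rVert_* \le \lVert M \rVert_*^k \le (\rho(M)+\varepsilon)^k$ for every $k \ge 1$. Finally, since all norms on $\mathbb{R}^{N\times N}$ are equivalent, the entrywise magnitude is dominated by $\lVert \cdot \rVert_*$: there is a constant $c$, depending only on the choice of $S$ and $t$ (hence only on $M$ and $\varepsilon$), such that $\lvert (A)_{ij} \rvert \le c \lVert A \rVert_*$ for every matrix $A$ and all $i,j$. Applying this with $A = M^k$ yields $(M^k)_{ij} \le \lvert (M^k)_{ij} \rvert \le c\,(\rho(M)+\varepsilon)^k$, and setting $C(M,\varepsilon) = c$ completes the argument.

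I do not anticipate any real obstacle beyond recalling the norm-construction argument, and indeed the statement is quoted verbatim from Horn and Johnson, so a fully self-contained proof could simply defer to that reference. The one point deserving care is that $C(M,\varepsilon)$ must be \emph{uniform} in $k$; this is automatic, because both the norm-equivalence constant $c$ and the factor $\rho(M)+\varepsilon$ are independent of $k$, the power $k$ entering only through the exponent.
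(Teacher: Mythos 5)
Your proof is correct. Note, however, that the paper itself gives no proof of this lemma: it is stated purely as a citation of Horn and Johnson, Corollary 5.6.13, and is used as a black box in the proof of Theorem 4 (via Lemma 7). What you have written is essentially the standard textbook argument underlying that cited corollary: the Jordan-form-plus-diagonal-scaling construction of a vector norm whose induced matrix norm satisfies $\lVert M \rVert_* \le \rho(M) + \varepsilon$ (Horn and Johnson's Lemma 5.6.10), followed by submultiplicativity to get $\lVert M^k \rVert_* \le (\rho(M)+\varepsilon)^k$ and equivalence of norms on $\mathbb{R}^{N\times N}$ to pass to the entrywise bound with a constant $C(M,\varepsilon)$ independent of $k$. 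All steps check out, including the two points that deserve care: the scaling $D_t^{-1} J D_t$ indeed turns the superdiagonal ones into $t$, so the maximum row sum is at most $\rho(M)+t$; and the norm-equivalence constant depends only on $S$ and $t$, hence only on $M$ and $\varepsilon$, so the uniformity in $k$ is automatic as you say. (One cosmetic remark: since the Jordan basis $S$ may be complex, your auxiliary norm lives on $\mathbb{C}^N$, which is harmless because the final entrywise inequality concerns $\lvert (M^k)_{ij} \rvert$ and dominates $(M^k)_{ij}$.) So your proposal supplies, correctly, the self-contained proof that the paper deliberately outsources to its reference.
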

For any $N \times 1$ vector $z$, the maximum vector norm is given by
\begin{align*}
 \lVert z \rVert_\infty = \textrm{max}\{ |z_1|, ..., |z_N| \}.
\end{align*}
For any $N\times N$ matrix $M$ with elements $m_{ij}$, we denote the matrix norm which is induced the maximum vector norm by
\begin{align}\label{matrix_max_norm}
\lVert M \rVert_\infty = \underset{i=1, ..., N}{\textrm{max}} ~\sum^N_{j=1} |m_{ij}|.
\end{align}
\begin{lemma} \label{lemma:right_side}
Suppose that Assumption \ref{assumption:spreading_parameters}--\ref{assumption:above_threshold} hold, and let $\varepsilon >0$ be given. Then, there is a constant $C(B, \varepsilon)$ such that
\begin{equation*}
 \left(B^{ k^* - 1 } \left( I - B \right)v[1] \right)_i \le  C(B, \varepsilon) \left( \rho(B) + \varepsilon \right)^{k^*-1}  \left\lVert I - B \right\rVert_\infty ~ \lVert v[1] \rVert_1
\end{equation*}
holds for every integer $k^* \ge 2$ and every $i=1, ..., N$.
\end{lemma}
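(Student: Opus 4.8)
The plan is to reduce the component-wise inequality to a chain of matrix– and vector–norm estimates and then feed in Lemma \ref{lemma:matrix_norm_spectral_radius}, which is the only available tool that quantifies the decay rate $\rho(B)+\varepsilon$ of the powers $B^{k^*-1}$. The guiding observation is that the induced maximum norm $\lVert \cdot \rVert_\infty$ defined in (\ref{matrix_max_norm}) is submultiplicative, so that applying it in the correct order makes the factor $\lVert I-B\rVert_\infty$ (a maximal \emph{row} sum) appear exactly as the statement demands, rather than a column-sum quantity that a careless grouping would produce.

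Concretely, I would first pass from the signed $i$-th component to the norm and split off the matrix factors by submultiplicativity,
\[
\left(B^{k^*-1}(I-B)v[1]\right)_i \le \left\lVert B^{k^*-1}(I-B)v[1]\right\rVert_\infty \le \left\lVert B^{k^*-1}\right\rVert_\infty \left\lVert I-B\right\rVert_\infty \left\lVert v[1]\right\rVert_\infty,
\]
and then use the elementary bound $\lVert v[1]\rVert_\infty \le \lVert v[1]\rVert_1$. It remains to control $\lVert B^{k^*-1}\rVert_\infty = \max_i \sum_{j=1}^N |(B^{k^*-1})_{ij}|$. By Lemma \ref{lemma:matrix_norm_spectral_radius} there is a constant $C'(B,\varepsilon)$ with $|(B^{k^*-1})_{ij}| \le C'(B,\varepsilon)\left(\rho(B)+\varepsilon\right)^{k^*-1}$ for all $i,j$ and every integer $k^*\ge 2$ (so that the exponent $k^*-1\ge 1$ is admissible for the lemma). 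Summing the $N$ entries of each row gives $\lVert B^{k^*-1}\rVert_\infty \le N\,C'(B,\varepsilon)\left(\rho(B)+\varepsilon\right)^{k^*-1}$, and absorbing the factor $N\,C'(B,\varepsilon)$ into the constant $C(B,\varepsilon)$ yields the claimed inequality.

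The one point requiring care is the sign structure. Unlike the non-negative matrix $R$, the deflated matrix $B = R - \rho(R)x_1 x_1^T$ is \emph{not} non-negative, so its powers $B^{k^*-1}$ have entries of both signs; I therefore cannot argue entrywise with non-negative quantities as in the earlier appendices, and must route the estimate through absolute values via the norm $\lVert \cdot \rVert_\infty$, invoking Lemma \ref{lemma:matrix_norm_spectral_radius} in its absolute-value form. This is precisely what secures the decay factor $\left(\rho(B)+\varepsilon\right)^{k^*-1}$ instead of one governed by $\rho(R)$, and it is the feature that makes the bound useful downstream: combined with the lower bound of Lemma \ref{lemma:left_side}, whose growth rate is $\rho(R)^{k^*-1}$, and with $\varepsilon$ chosen small enough that $\rho(B)+\varepsilon<\rho(R)$, the right-hand side of (\ref{khbsssss}) is eventually dominated by the left-hand side, which is what will pin down the time $k^*$. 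Beyond this sign bookkeeping, the remaining steps are routine norm manipulations and present no genuine difficulty.
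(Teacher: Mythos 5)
Your proposal is correct and follows essentially the same route as the paper's own proof: both reduce the componentwise bound to the induced maximum norm, exploit its submultiplicativity to split off $\lVert B^{k^*-1}\rVert_\infty$ and $\lVert I-B\rVert_\infty$, and invoke Lemma \ref{lemma:matrix_norm_spectral_radius} (in its absolute-value form) to obtain the decay factor $\left(\rho(B)+\varepsilon\right)^{k^*-1}$, absorbing the dimension factor $N$ into $C(B,\varepsilon)$. The only cosmetic difference is that the paper bounds $(Mz)_i \le \lVert M\rVert_\infty \lVert z\rVert_1$ in one step, whereas you pass through $\lVert v[1]\rVert_\infty \le \lVert v[1]\rVert_1$; these are equivalent bookkeeping choices.
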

\begin{proof}
For any $N\times 1$ vector $z$ and any $N \times N$ matrix $M$, it holds
\begin{align*}
 \left(M z\right)_i = \sum^N_{j=1} m_{ij} z_j  \le \sum^N_{l=1} |m_{il}| \sum^N_{j=1} |z_j|.
\end{align*}
From (\ref{matrix_max_norm}) and $\lVert z \rVert_1 = \sum^N_{j=1} |z_j|$, we obtain that
\begin{align}\label{kljnkjbnisss}
(M z)_i \le \lVert M \rVert_\infty ~ \lVert z \rVert_1 , \quad i=1, ..., N,
\end{align}
for any vector $z$ and any square matrix $M$. By setting the matrix $M$ to $M= \left(B^{ k^* - 1 } \left( I - B \right)\right)$ and the vector $z$ to $z = v[1]$, we obtain from (\ref{kljnkjbnisss}) that
\begin{align*}
\left(B^{ k^* - 1 } \left( I - B \right)v[1] \right)_i \le \left\lVert B^{ k^* - 1 } \left( I - B \right) \right\rVert_\infty ~ \lVert v[1] \rVert_1
\end{align*}
for every $i=1, ..., N$. Since the matrix norm is sub-multiplicative\footnote{A matrix norm $\lVert \cdot \rVert$ is sub-multiplicative if $\lVert A B\rVert \le\lVert A\rVert \lVert B\rVert$ holds for any matrices $A, B$.}, it holds that
\begin{align} \label{kljnkhjbnaa}
\left(B^{ k^* - 1 } \left( I - B \right)v[1] \right)_i \le \left\lVert B^{ k^* - 1 } \right\rVert_\infty ~ \left\lVert I -B \right\rVert_\infty ~\lVert v[1] \rVert_1.
\end{align}
For a given matrix $M$ and a given $\varepsilon>0$, there is a constant $C(M, \varepsilon)$ such that 
\begin{align} \label{bounddddasss}
\lVert M^k \rVert_\infty \le C(M, \varepsilon) \left( \rho(M) + \varepsilon \right)^k
\end{align}
for all integers $k\ge 1$, which follows from Lemma \ref{lemma:matrix_norm_spectral_radius}. We combine (\ref{bounddddasss}) and (\ref{kljnkhjbnaa}) and obtain that, for any $\varepsilon>0$, there is a constant $C(B, \varepsilon)$ such that 
\begin{equation*} 
\left(B^{ k^* - 1 } \left( I - B \right)v[1] \right)_i \le  C(B, \varepsilon) \left( \rho(B) + \varepsilon \right)^{k^*-1} \left\lVert I - B \right\rVert_\infty ~ \lVert v[1] \rVert_1
\end{equation*}
holds for every integer $k^* \ge 2$ and every node $i=1, ..., N$.  
\end{proof}
By applying the bounds of Lemma \ref{lemma:left_side} and Lemma \ref{lemma:right_side} to (\ref{khbsssss}), we obtain that the viral state $v[k]$ is strictly increasing at every time $k\ge k^*$ if
\begin{equation} \label{inequaaajnkjs}
\rho(R)^{ k^* - 1 } \left( \rho(R) - 1 \right)x^2_{1, \textup{\textrm{min}}} \lVert v[1]\rVert_1 > C(B, \varepsilon) \left( \rho(B) + \varepsilon \right)^{k^*-1} \left\lVert I - B \right\rVert_\infty ~ \lVert v[1] \rVert_1  + \Upsilon[k^*].
\end{equation}
In the limit $v[1]\rightarrow 0$, it holds $\Upsilon[k^*] = \mathcal{O}(\lVert v[1]\rVert^2_2)$ for $k^*$ fixed, and the inequality (\ref{inequaaajnkjs}) converges to
\begin{equation*} 
\rho(R)^{ k^* - 1 } \left( \rho(R) - 1 \right)x^2_{1, \textup{\textrm{min}}} > C(B, \varepsilon) \left( \rho(B) + \varepsilon \right)^{k^*-1} \left\lVert I - B \right\rVert_\infty.
\end{equation*}
We take the logarithm and obtain
\begin{equation} \label{kljbniouhf}
 \log\left( \left( \rho(R) - 1 \right)x^2_{1, \textup{\textrm{min}}} \right) > \log\left( C(B, \varepsilon)\left\lVert I - B \right\rVert_\infty \right) + \left(k^*-1\right)\log\left(\frac{\rho(B) + \varepsilon}{\rho(R)} \right).
 \end{equation}
We choose $\varepsilon$ such that $\rho(B)+ \varepsilon < \rho(R)$ and find that (\ref{kljbniouhf}) is satisfied if
\begin{equation} \label{kjbkassss}
 k^* > \frac{ \log\left( \dfrac{\left( \rho(R) - 1 \right)x^2_{1, \textrm{min}}}{ C(B, \varepsilon)\left\lVert I - B \right\rVert_\infty} \right)}{\log\left(\rho(B) + \varepsilon \right) - \log\left(\rho(R) \right)} + 1.
\end{equation}
Hence, in the limit $v[1] \rightarrow 0$, the viral state $v[k]$ is strictly increasing at every time $k\ge k^*$ if $k^*$ satisfies (\ref{kjbkassss}), and we emphasise that (\ref{kjbkassss}) is independent of $v[1]$. Thus, when $v[1] \rightarrow 0$, the set $S_-$ of time instants $k$, for which the viral state $v[k]$ is not strictly increasing, is a subset of $\{1, ..., k^* - 1\}$. Hence, the set $S_-$ is finite when $v[1] \rightarrow 0$, which is the first requirement for a quasi-increasing viral state evolution by Definition \ref{definition:quasi_increasing}. It remains to show that, for any $\epsilon$-stringency, 
\begin{align} \label{jkbkooaaass}
\lVert v[k+1] - v[k] \rVert_2 \le \epsilon \quad \forall k \in S_-,
\end{align}
if $\lVert v[1] \rVert_2 \le \vartheta(\epsilon)$ for a sufficiently small $\vartheta(\epsilon)$. With the triangle inequality it holds that
\begin{align*}
\lVert v[k+1] - v[k] \rVert_2 \le \lVert v[k+1] \rVert_2 + \lVert v[k] \rVert_2, \quad \forall k \in S_-.
\end{align*}
Since $v[1] \rightarrow 0$ implies that $v[k] \rightarrow 0$ for every time $k \le k^* +1$, we obtain that, for any $\epsilon$-stringency, there is a $\vartheta(\epsilon)$ such that $\lVert v[1] \rVert_2 \le \vartheta(\epsilon)$ implies (\ref{jkbkooaaass}).

\section{Proof of Proposition 2}
From (\ref{NIMFA_disc_}) and (\ref{lti_upper_bound_A}) follows that 
\begin{equation} \label{kjbksssqqq}
v^{(1)}_{\textrm{ub}, i} [k + 1] -v_i [k + 1] = (1 - q_i) \left( v^{(1)}_{\textrm{ub}, i}[k] - v_i[k]\right) +  \sum^N_{j=1} w_{i j} \left( v^{(1)}_{\textrm{ub}, j}[k] - v_j[k]\right)+ v_i[k] \sum^N_{j=1} w_{i j} v_j[k].
\end{equation}
Hence, $v^{(1)}_{\textrm{ub}, j}[k] \ge v_j[k]$ for every node $j$ implies that the first term and the first sum of (\ref{kjbksssqqq}) are non-negative. Since the second sum in (\ref{kjbksssqqq}) is positive, it follows from (\ref{kjbksssqqq}) that $v^{(1)}_{\textrm{ub}, i} [k + 1] > v_i [k + 1]$ if $v^{(1)}_{\textrm{ub}, j}[k] \ge v_j[k]$ for every node $j$. The LTI system (\ref{lti_upper_bound_A}) is asymptotically stable if and only if the spectral radius $\rho(R)$ satisfies $\rho(R) < 1$.

\section{Proof of Proposition 3}
\subsection{Statement 1}
 We prove that $\Delta v_{\textrm{ub},i}[k]$, given by (\ref{lti_upper_bound_B}), is indeed an upper bound of $\Delta v_i[k]$ for all nodes $i$ at every time $k\ge 1$ by induction. For the initial time $k=1$, it holds $\Delta v_{\textrm{ub},i}[1] \ge \Delta v_i[1]$ by assumption. In the following, we show that $\Delta v_{\textrm{ub}, i}[k] \ge  \Delta v_i[k]$ for all nodes $i$ implies $\Delta v_{\textrm{ub}, i}[k+1] \ge  \Delta v_i[k+1]$ for all nodes $i$. From (\ref{lti_upper_bound_B}) and (\ref{delta_v_system}) follows that the difference of the bound $\Delta v_{\textrm{ub}, i}[k+1]$ to the true value $\Delta v_{i}[k+1]$ at time $k+1$ can be stated as
\begin{equation} \label{kljbnkljbn}
\Delta v_{\textrm{ub}}[k + 1] - \Delta v[k + 1] = F \left(\Delta v_{\textrm{ub}}[k] - \Delta v[k]  \right) + \textrm{diag}\left(\Delta v[k]\right) W \Delta v[k].
\end{equation}
For the first addend in (\ref{kljbnkljbn}) it holds that
\begin{equation*}
F \left(\Delta v_{\textrm{ub}}[k] - \Delta v[k]  \right)  \ge 0,
\end{equation*}
since $\Delta v_{\textrm{ub}}[k] - \Delta v[k] \ge 0$ and since the matrix $F$ is non-negative by Lemma \ref{lemma:nonnegativity}. Under Assumption \ref{assumption:initial_state}, Corollary \ref{corollary:below_steady_state} implies that $\Delta v_i[k] \le 0$ for every node $i$ at every time $k\ge 1$. Thus, we obtain for the second addend in (\ref{kljbnkljbn}) that
\begin{equation*}
 \sum^N_{j=1} w_{ij} \Delta v_j[k] \Delta v_i[k] \ge 0, \quad i=1, ..., N,
\end{equation*}
since $w_{ij}\ge 0$ for every $i,j=1, ..., N$ under Assumption \ref{assumption:spreading_parameters}. Thus, both addends of (\ref{kljbnkljbn}) are non-negative, which implies that $\Delta v_{\textrm{ub}}[k + 1] \ge \Delta v[k + 1]$.
\subsection{Statement 2}
Under Assumption \ref{assumption:sampling_time}, the matrix $F$ is non-negative as stated by Lemma \ref{lemma:nonnegativity}. Hence, we obtain from
\begin{equation*}
\Delta v_i[k+1] = \sum^N_{j=1}F_{ij}\Delta v_i[k]
\end{equation*}
that $\Delta v_i[k] \le 0$ for every node $i$ implies $\Delta v_i[k+1] \le 0$ for every node $i$.

\section{Proof of Proposition 4}
\subsection{Statement 1}
Since  $F_\textrm{lb}  =F - \textrm{diag}\left(v_\textrm{min} -v_\infty \right)W$, we can rewrite the lower bound $\Delta v_\textrm{lb}[ k + 1 ]$ at time $k+1$ with (\ref{lti_lower_bound}) as 
\begin{equation}\label{dkjbiias}
\Delta v_\textrm{lb}[ k + 1 ] = F \Delta v_\textrm{lb}[ k ]- \textrm{diag}\left(v_\textrm{min} -v_\infty \right) W \Delta v_\textrm{lb}[ k ].
\end{equation}
We prove that $\Delta v_\textrm{lb}[k]$ given by (\ref{dkjbiias}) is indeed a lower bound of $\Delta v[k]$ at every time $k\ge 1$ by induction. For the initial time $k=1$, it holds $\Delta v_\textrm{lb}[1] \le \Delta v[1]$ by assumption. In the following, we show that $\Delta v_\textrm{lb}[k] \le  \Delta v[k]$ implies $\Delta v_\textrm{lb}[k+1] \le  \Delta v[k+1]$ for any time $k$. We obtain from (\ref{delta_v_system}) and (\ref{dkjbiias}) that
\begin{equation}\label{ljnkljdsss}
\Delta v[ k + 1 ] - \Delta v_\textrm{lb}[ k + 1 ] = F\left(\Delta v[ k ] - \Delta v_\textrm{lb}[ k  ]\right) + \textrm{diag}\left(v_\textrm{min} -v_\infty \right) W \Delta v_\textrm{lb}[ k ] - \textrm{diag}\left( \Delta v[k]\right) W \Delta v[k] .
\end{equation}
Under Assumption \ref{assumption:sampling_time}, the matrix $F$ is non-negative as stated by Lemma \ref{lemma:nonnegativity}. From the non-negativity of the matrix $F$ and from $\Delta v[ k ] \ge \Delta v_\textrm{lb}[ k  ]$ follows that the first term of (\ref{dkjbiias}) is non-negative, i.e. 
\begin{equation*}
F(\Delta v[ k ] - \Delta v_\textrm{lb}[ k  ]) \ge 0.
\end{equation*}
We denote the $i$-th component of second term in (\ref{ljnkljdsss}) by
\begin{equation*}
\varsigma_i = \sum^N_{j  = 1} w_{ij} \left( \left(v_{\textrm{min}, i} -v_{\infty, i} \right)  \Delta v_{\textrm{lb}, j}[ k ] - \Delta v_i[k] \Delta v_j[k] \right).
\end{equation*}
Under Assumption \ref{assumption:initial_state} it holds $\Delta v_i[k] \le 0$ as stated by Corollary \ref{corollary:below_steady_state}. Furthermore, since $\Delta v_j[k] \ge \Delta v_{\textrm{lb}, j}[ k ]$, we obtain that
\begin{equation*}
\varsigma_i \ge \sum^N_{j  = 1} w_{ij} \left( \left(v_{\textrm{min}, i} -v_{\infty, i} \right) \Delta v_{\textrm{lb}, j}[ k ] - \Delta v_i[k] \Delta v_{\textrm{lb}, j}[ k ] \right). 
\end{equation*}
Since we assumed that $v[k]\ge v_\textrm{min}$ holds at every time $k$, we obtain that $\Delta v[k] \ge v_\textrm{min} - v_{\infty}$ at every time $k$. Hence, we can lower bound the term $\varsigma_i$ by
\begin{equation*}
\varsigma_i \ge \sum^N_{j  = 1} w_{ij} ( \left(v_{\textrm{min}, i} -v_{\infty, i} \right)  \Delta v_{\textrm{lb}, j}[ k ] -  \left(v_{\textrm{min}, i} -v_{\infty, i} \right)\Delta v_{\textrm{lb}, j}[ k ] )= 0.
\end{equation*}
Thus, both terms in (\ref{ljnkljdsss}) are non-negative, which implies that $\Delta v[ k + 1 ] \ge \Delta v_\textrm{lb}[ k + 1 ] $ if $\Delta v[ k ] \ge \Delta v_\textrm{lb}[ k ]$.

\subsection{Statement 2} \label{subsec:lower_bound_statement2}
 The proof is in parts inspired by the proof of Ahn \textit{et al.} \cite[Theorem 5.1]{ahn2013global} and based on two lemmas.
\begin{lemma}\label{lemma:F_lb_pos}
For any two vectors $z, \tilde{z}$ with $z \ge \tilde{z}$ it holds that $F_\textup{\textrm{lb}} z \ge F_\textup{\textrm{lb}} \tilde{z}$.
\end{lemma}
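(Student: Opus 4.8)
The plan is to reduce the monotonicity claim entirely to the non-negativity of the matrix $F_{\textup{\textrm{lb}}}$. Since the map $z \mapsto F_{\textup{\textrm{lb}}} z$ is linear, I would first write $F_{\textup{\textrm{lb}}} z - F_{\textup{\textrm{lb}}} \tilde{z} = F_{\textup{\textrm{lb}}}(z - \tilde{z})$ and set $w = z - \tilde{z}$, so that $w \ge 0$ by hypothesis. The assertion $F_{\textup{\textrm{lb}}} z \ge F_{\textup{\textrm{lb}}} \tilde{z}$ is then equivalent to $F_{\textup{\textrm{lb}}} w \ge 0$ for every $w \ge 0$, which holds precisely when every entry of $F_{\textup{\textrm{lb}}}$ is non-negative. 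Hence it suffices to prove that $F_{\textup{\textrm{lb}}}$ is a non-negative matrix.

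To establish non-negativity, I would exploit the relation $F_{\textup{\textrm{lb}}} = F - \textrm{diag}(v_{\textup{\textrm{min}}} - v_\infty) W$ already recorded in the proof of Statement 1 of Proposition \ref{proposition:lower_bound_lti}, i.e. $F_{\textup{\textrm{lb}}} = F + \textrm{diag}(v_\infty - v_{\textup{\textrm{min}}}) W$. By Lemma \ref{lemma:nonnegativity}, the matrix $F$ is non-negative. The correction term $\textrm{diag}(v_\infty - v_{\textup{\textrm{min}}}) W$ is non-negative as well, because $W \ge 0$ by Assumption \ref{assumption:spreading_parameters}, and $v_\infty - v_{\textup{\textrm{min}}} \ge 0$ since the standing hypothesis $v[k] \ge v_{\textup{\textrm{min}}}$ combined with Corollary \ref{corollary:below_steady_state} ($v[k] \le v_\infty$) yields $v_{\textup{\textrm{min}}, i} \le v_i[k] \le v_{\infty, i}$ for every node $i$. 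As a sum of two non-negative matrices, $F_{\textup{\textrm{lb}}}$ is non-negative, which proves the lemma.

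Alternatively, I could argue entrywise, mirroring the proof of Lemma \ref{lemma:nonnegativity}. The off-diagonal entries $(F_{\textup{\textrm{lb}}})_{ij} = (1 - v_{\textup{\textrm{min}}, i}) w_{ij}$ are non-negative since $v_{\textup{\textrm{min}}, i} \le v_{\infty, i} < 1$ and $w_{ij} \ge 0$, while the diagonal entries $(F_{\textup{\textrm{lb}}})_{ii} = 1 + q_i/(v_{\infty, i} - 1) + (1 - v_{\textup{\textrm{min}}, i}) w_{ii}$ are non-negative provided $1 + q_i/(v_{\infty, i} - 1) \ge 0$, i.e. $v_{\infty, i} \le 1 - q_i$. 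This last bound is exactly inequality (\ref{eq:erasff}), established in the proof of Lemma \ref{lemma:nonnegativity} from the steady-state bound of Lemma \ref{lemma:v_infty_bound} and Assumption \ref{assumption:sampling_time}.

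I expect no serious obstacle here: the lemma is essentially the observation that a monotone linear map corresponds to a non-negative matrix. The only point requiring care is the inequality $v_{\textup{\textrm{min}}} \le v_\infty$, which underlies both the sign of the correction term and the diagonal bound; this follows immediately from combining the hypothesis on $v_{\textup{\textrm{min}}}$ with Corollary \ref{corollary:below_steady_state}, so no genuine difficulty arises.
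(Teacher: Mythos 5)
Your proposal is correct and takes essentially the same route as the paper: reduce the monotonicity claim to entrywise non-negativity of $F_\textup{\textrm{lb}}$, and establish that non-negativity by comparison with $F$ (via Lemma \ref{lemma:nonnegativity}) together with $w_{ij}\ge 0$ and $v_{\textup{\textrm{min}}}\le v_\infty$ --- the paper does this entrywise, showing $(F_\textup{\textrm{lb}})_{ii}\ge F_{ii}\ge 0$ and $(F_\textup{\textrm{lb}})_{ij}=(1-v_{\textup{\textrm{min}},i})w_{ij}\ge 0$, while your primary argument packages the same comparison as the matrix decomposition $F_\textup{\textrm{lb}}=F+\textup{\textrm{diag}}(v_\infty-v_{\textup{\textrm{min}}})W$. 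A minor merit of your write-up is that you justify $v_{\textup{\textrm{min}}}\le v_\infty$ explicitly from the hypothesis $v[k]\ge v_{\textup{\textrm{min}}}$ and Corollary \ref{corollary:below_steady_state}, a step the paper uses implicitly.
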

\begin{proof}
First, we show that the matrix $F_\textrm{lb}$ is non-negative. The elements of the matrix $F_\textrm{lb}$ are given by
\begin{align}\label{kjbkjboaaasss}
\left( F_\textrm{lb}\right)_{ij} = \begin{cases}
1 + \dfrac{q_i}{v_{\infty, i} - 1} + \left( 1- v_{\textrm{min}, i}\right) w_{ii}  \quad &\text{if $i=j$}, \\
\left( 1- v_{\textrm{min}, i}\right) w_{ij} &\text{if $i\neq j$}.
\end{cases}
\end{align}
For every node $i$, we have $\left( F_\textrm{lb}\right)_{ii} \ge F_{ii} \ge 0$ under Assumption \ref{assumption:sampling_time} as stated by Lemma \ref{lemma:nonnegativity}. Since $v_{\textrm{min}, i} <1$ and $w_{ij} \ge 0$ for every nodes $i, j$, the matrix $F_\textrm{lb}$ is non-negative. Hence, $z \ge \tilde{z}$ implies that
\begin{equation*}
\left( F_\textrm{lb}z - F_\textrm{lb} \tilde{z} \right)_i = \sum^N_{j=1} \left( F_\textrm{lb}\right)_{ij} \left(z_j - \tilde{z}_j \right) \ge 0, \quad \forall i=1, ..., N.
\end{equation*}
\end{proof}
\begin{lemma}\label{lemma:z_k_lower_bound}
Define the $N\times 1$ vector $z^{(0)}$ as
\begin{equation*}
z^{(0)} = -  v_\infty
\end{equation*}
and the $N\times 1$ vectors $z^{(k+1)}$ as
\begin{equation*}
z^{(k+1)} = F_\textup{\textrm{lb}} z^{(k)} \quad k\ge 0.
\end{equation*}
Then, the vector $z^{(k)}$ at iteration $k$ can be lower bounded by 
\begin{equation} \label{inequuuuuu}
z^{(k)} \ge -  \left( 1  - q_\textup{\textrm{min}} \frac{\gamma}{1- \gamma}  \right)^k v_{\infty}.
\end{equation}
\end{lemma}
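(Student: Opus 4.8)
The plan is to prove the bound (\ref{inequuuuuu}) by induction on the iteration index $k$, the engine being the monotonicity of $F_\textup{\textrm{lb}}$ (Lemma \ref{lemma:F_lb_pos}) combined with a single one-step estimate for the action of $F_\textup{\textrm{lb}}$ on $v_\infty$. Abbreviate $\alpha = 1 - q_\textup{\textrm{min}}\,\gamma/(1-\gamma)$, so the target reads $z^{(k)} \ge -\alpha^k v_\infty$. The base case $k=0$ is immediate because $z^{(0)} = -v_\infty = -\alpha^0 v_\infty$.

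The crux of the argument is the componentwise inequality
\begin{equation*}
F_\textup{\textrm{lb}}\, v_\infty \le \alpha\, v_\infty .
\end{equation*}
To obtain it, I would evaluate $(F_\textup{\textrm{lb}} v_\infty)_i$ using the entries (\ref{kjbkjboaaasss}) of $F_\textup{\textrm{lb}}$ and then eliminate the off-diagonal sum through the steady-state identity $\sum_{j} w_{ij} v_{\infty,j} = q_i\, v_{\infty,i}/(1-v_{\infty,i})$ of (\ref{steady_state_componentwise}). The diagonal and off-diagonal contributions combine, and a short cancellation collapses the expression to
\begin{equation*}
(F_\textup{\textrm{lb}}\, v_\infty)_i = v_{\infty,i}\left( 1 - \frac{q_i\, v_{\textup{\textrm{min}},i}}{1 - v_{\infty,i}} \right).
\end{equation*}
Because $v_\textup{\textrm{min}} \le v[k] \le v_\infty$, where the upper bound is Corollary \ref{corollary:below_steady_state}, each coordinate satisfies $q_i \ge q_\textup{\textrm{min}}$, $v_{\textup{\textrm{min}},i} \ge \gamma$ and $v_{\infty,i} \ge \gamma$ (so that $1 - v_{\infty,i} \le 1-\gamma$). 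These three inequalities give $q_i v_{\textup{\textrm{min}},i}/(1-v_{\infty,i}) \ge q_\textup{\textrm{min}}\gamma/(1-\gamma)$, and since $v_{\infty,i}>0$ the claimed coordinatewise bound follows.

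The inductive step is then short: assuming $z^{(k)} \ge -\alpha^k v_\infty$, Lemma \ref{lemma:F_lb_pos} and linearity give $z^{(k+1)} = F_\textup{\textrm{lb}} z^{(k)} \ge -\alpha^k F_\textup{\textrm{lb}} v_\infty$, and inserting $F_\textup{\textrm{lb}} v_\infty \le \alpha v_\infty$ yields $z^{(k+1)} \ge -\alpha^{k+1} v_\infty$. This final substitution is precisely where the sign of $\alpha$ intervenes: to pass from $F_\textup{\textrm{lb}} v_\infty \le \alpha v_\infty$ to $-\alpha^k F_\textup{\textrm{lb}} v_\infty \ge -\alpha^{k+1} v_\infty$ one must multiply by $-\alpha^k$ without reversing the intended direction, which requires $\alpha^k \ge 0$.

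The main obstacle is thus to certify that $\alpha \in [0,1)$ rather than to run the induction. That $\alpha < 1$ is clear from $q_\textup{\textrm{min}} > 0$ and $0 < \gamma < 1$. For $\alpha \ge 0$ I would bound $\gamma$ from above: from $v_{\textup{\textrm{min}},i} \le v_{\infty,i}$ and the steady-state bound $v_{\infty,i} \le 1-q_i$ recorded in (\ref{eq:erasff}) one gets $\gamma \le 1 - q_\textup{\textrm{min}}$. The elementary fact $(1-q_\textup{\textrm{min}})(1+q_\textup{\textrm{min}}) = 1 - q_\textup{\textrm{min}}^2 \le 1$ then gives $\gamma \le 1 - q_\textup{\textrm{min}} \le 1/(1+q_\textup{\textrm{min}})$, which rearranges to $q_\textup{\textrm{min}}\gamma/(1-\gamma) \le 1$, i.e. $\alpha \ge 0$. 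With $\alpha \in [0,1)$ established, the induction closes and the bound (\ref{inequuuuuu}) holds for all $k$.
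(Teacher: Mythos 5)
Your proof is correct and takes essentially the same route as the paper's: the identical one-step estimate $F_\textup{\textrm{lb}} v_\infty \le \left(1 - q_\textup{\textrm{min}}\frac{\gamma}{1-\gamma}\right) v_\infty$, obtained by eliminating $\sum_{j} w_{ij} v_{\infty,j}$ via the steady-state equation (\ref{steady_state_componentwise}) and bounding $q_i \ge q_\textup{\textrm{min}}$, $v_{\textup{\textrm{min}},i} \ge \gamma$, $v_{\infty,i} \ge \gamma$, followed by the same induction driven by the monotonicity of $F_\textup{\textrm{lb}}$ (Lemma \ref{lemma:F_lb_pos}). Your extra verification that $1 - q_\textup{\textrm{min}}\gamma/(1-\gamma) \ge 0$, which is genuinely needed when the one-step estimate is multiplied by $-\left(1 - q_\textup{\textrm{min}}\frac{\gamma}{1-\gamma}\right)^{k}$ in the inductive step (and without which the stated bound could even fail for odd $k$), is a detail the paper's proof passes over in silence, so it is a strengthening rather than a deviation.
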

\begin{proof}
The right-hand side of (\ref{inequuuuuu}) is parallel to the steady-state vector $v_\infty$, and, as a first step, we consider the product $\left(- F_\textup{\textrm{lb}} v_\infty\right)$. With (\ref{kjbkjboaaasss}), we obtain for every $i=1, ..., N$ that
\begin{align*}
 \left(- F_\textup{\textrm{lb}} v_\infty\right)_i = -\sum^N_{j=1} \left( F_\textrm{lb} \right)_{ij} v_{\infty, j} =  -v_{\infty, i} +  q_i \frac{ v_{\infty, i}}{ 1 -v_{\infty, i}}  -  \left(1- v_{\textrm{min}, i} \right) \sum^N_{j=1} w_{ij} v_{\infty, j}.
\end{align*}
 The steady-state equation (\ref{steady_state_componentwise}) yields that
\begin{equation}\label{kjbkbkasss}
 -\sum^N_{j=1} \left( F_\textrm{lb} \right)_{ij} v_{\infty, j}  =  -v_{\infty, i} +  q_i \frac{ v_{\infty, i}}{ 1 -v_{\infty, i}} -  \left(1- v_{\textrm{min}, i} \right) q_i \frac{v_{\infty, i}}{1 - v_{\infty, i}}.
\end{equation}
We simplify (\ref{kjbkbkasss}) and obtain
\begin{equation}\label{kjniossss}
-\sum^N_{j=1} \left( F_\textrm{lb} \right)_{ij} v_{\infty, j}  = -v_{\infty, i} \left(  1 - q_i \frac{v_{\textrm{min}, i}}{1 - v_{\infty, i}}\right).
\end{equation}
We stack (\ref{kjniossss}), which yields that
\begin{equation} \label{kjbkjhbiubhas}
 -F_\textrm{lb}  v_\infty \ge -  v_{\infty} \left( 1 - q_\textup{\textrm{min}} \frac{\gamma}{1- \gamma}  \right),
\end{equation}
since $q_i \ge q_\textrm{min}$ and $v_{\infty, i} \ge v_{\textrm{min}, i} \ge \gamma$ for every $i=1, ..., N$. As a second step, we obtain the inequality (\ref{inequuuuuu}) from (\ref{kjbkjhbiubhas}) by induction. At iteration $k=0$, (\ref{inequuuuuu}) holds true with equality. Consider that (\ref{inequuuuuu}) holds at time $k\ge 0$, then we obtain 
\begin{equation*} 
z^{(k+1)} = F_\textrm{lb} z^{(k)} \ge F_\textrm{lb} \left( -  \left( 1  - q_\textup{\textrm{min}} \frac{\gamma}{1- \gamma}  \right)^k v_{\infty}\right),
\end{equation*}
where the inequality follows from Lemma \ref{lemma:F_lb_pos}. Finally, with (\ref{kjbkjhbiubhas}), we obtain that
\begin{equation*} 
z^{(k+1)} \ge  - \left( 1  - q_\textup{\textrm{min}} \frac{\gamma}{1- \gamma}  \right)^{k+1} v_\infty.
\end{equation*}
\end{proof}

Since $\Delta v_\textrm{lb}[1] = \Delta v[1]$ and $\Delta v[1] = v[1]-v_\infty$, it holds that $\Delta v_\textrm{lb}[1] \ge -v_\infty = z^{(0)}$. Hence, Lemma \ref{lemma:F_lb_pos} and Lemma \ref{lemma:z_k_lower_bound} imply (by induction) that 
\begin{equation*}
\Delta v_\textrm{lb}[k] \ge z^{(k-1)} \ge  - \left( 1  - q_\textup{\textrm{min}} \frac{\gamma}{1- \gamma}  \right)^{k-1} v_\infty 
\end{equation*}
at every time $k\ge 1$.

\section{Proof of Corollary 3}
If the viral state $v[k]$ is globally strictly increasing and $v[1]~>~0$, then we can set $v_\textrm{min} = v[1]>0$. Since
\begin{equation*}
\Delta v_\textrm{lb} [k] \le \Delta v[k] \le 0 \Rightarrow \lVert \Delta v[k] \rVert_2 \le \lVert \Delta v_\textrm{lb} [k] \rVert_2,
\end{equation*}
 Proposition \ref{proposition:lower_bound_lti} yields that
\begin{equation*}
 \lVert \Delta v[k] \rVert_2 \le   \left( 1  - q_\textrm{min} \frac{\gamma}{1- \gamma}  \right)^{k-1} \lVert v_\infty \rVert_2 \quad \forall k \ge 1,
\end{equation*} 
 where $\gamma = \textrm{min}\{ v_1[1], ...,  v_N[1]\}>0$. If the viral state is \textit{not} globally strictly increasing, then we first show the existence of an $N \times 1$ vector $v_\textup{\textrm{min}}>0$ before applying Proposition \ref{proposition:lower_bound_lti}.
\begin{lemma} \label{lemma:v_min_existence}
Suppose that Assumptions \ref{assumption:spreading_parameters}--\ref{assumption:above_threshold} hold. Then, for any initial viral state $v[1] > 0$, there is an $N \times 1$ vector $v_\textup{\textrm{min}}>0$ such that $v[k]\ge v_\textup{\textrm{min}}$ holds at every time $k\ge 1$.
\end{lemma}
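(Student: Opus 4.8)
The plan is to trap the viral state from below by a linear time-invariant system that leaves a positive multiple of the steady-state vector $v_\infty$ invariant, and then to take $v_\textrm{min}$ to be that multiple. The starting point is Corollary \ref{corollary:below_steady_state}, which gives $v_i[k] \le v_{\infty, i}$ for every node $i$ and every $k \ge 1$; together with Lemma \ref{lemma:v_infty_bound}, which yields $v_{\infty, i} \le 1 - q_i/(q_i + \sum_j w_{ij}) < 1$, this shows $1 - v_i[k] \ge 1 - v_{\infty, i} > 0$. Replacing the factor $1 - v_i[k]$ in the NIMFA equation (\ref{NIMFA_disc_}) by its smallest admissible value and using $\sum_j w_{ij} v_j[k] \ge 0$ gives, for every node $i$,
\begin{align*}
v_i[k+1] = (1-q_i) v_i[k] + (1 - v_i[k]) \sum_{j=1}^N w_{ij} v_j[k] \ge (1-q_i) v_i[k] + (1 - v_{\infty, i}) \sum_{j=1}^N w_{ij} v_j[k].
\end{align*}
In vector form this reads $v[k+1] \ge G\, v[k]$, where $G = \textrm{diag}(u - q) + \textrm{diag}(u - v_\infty) W$.

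I would then record two properties of $G$. First, $G$ is non-negative: its off-diagonal entries $(1 - v_{\infty, i}) w_{ij}$ are non-negative, and its diagonal entries $1 - q_i + (1 - v_{\infty, i}) w_{ii}$ are non-negative because $q_i = \delta_i T \le 1$ under Assumption \ref{assumption:sampling_time} and $0 \le v_{\infty, i} < 1$. Second, and crucially, $v_\infty$ is a fixed point of $G$: the steady-state identity (\ref{steadyState_f}) states exactly
\begin{align*}
(G v_\infty)_i = (1 - q_i) v_{\infty, i} + (1 - v_{\infty, i}) \sum_{j=1}^N w_{ij} v_{\infty, j} = v_{\infty, i},
\end{align*}
so $G v_\infty = v_\infty$.

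With these two facts the conclusion follows by a one-step comparison. Since $v[1] > 0$ and $v_\infty > 0$, the scalar $\mu = \min_i v_i[1]/v_{\infty, i}$ is strictly positive and satisfies $v[1] \ge \mu v_\infty$. I claim $v[k] \ge \mu v_\infty$ for every $k \ge 1$, which I prove by induction: the base case is the choice of $\mu$, and if $v[k] \ge \mu v_\infty$, then non-negativity of $G$ (monotonicity of the map $z \mapsto Gz$) together with $G v_\infty = v_\infty$ gives $v[k+1] \ge G\, v[k] \ge G(\mu v_\infty) = \mu v_\infty$. Hence $v_\textrm{min} = \mu v_\infty > 0$ has the required property. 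The only real obstacle is to hit upon the correct linear minorant $G$; the natural choice that replaces $1 - v_i[k]$ by the worst case $1 - v_{\infty, i}$ turns out to fix $v_\infty$ exactly, which dispenses with any Perron--Frobenius analysis of the powers $G^{k-1}$ and collapses the argument to the invariance $G v_\infty = v_\infty$ together with a monotone comparison.
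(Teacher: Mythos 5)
Your proof is correct, but it follows a genuinely different route from the paper's. The paper argues in three qualitative steps: (i) positivity is preserved, so $v[k]>0$ for all $k$; (ii) by Assumption \ref{assumption:above_threshold} the origin is an unstable equilibrium, so the state vector cannot approach zero; (iii) strong connectivity (Assumption \ref{assumption:connected_graph}) propagates the lower bound from one node to its neighbours, so no individual component can approach zero. Your argument instead constructs the monotone linear minorant $G = \textup{\textrm{diag}}(u-q)+\textup{\textrm{diag}}(u-v_\infty)W$, observes that the steady-state equation says precisely $Gv_\infty = v_\infty$, and runs a one-line induction to get the explicit bound $v[k]\ge \mu v_\infty$ with $\mu = \min_i v_i[1]/v_{\infty,i}$. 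What your approach buys: an explicit, quantitative $v_{\textup{\textrm{min}}}$ (which plugs directly into the constant $\gamma$ of Corollary \ref{corollary:exponential_stability}), and a fully rigorous argument --- the paper's step (ii) is rather informal, since the assertion $x_1^Tv[k]>0$ by itself does not preclude $x_1^Tv[k]\rightarrow 0$, and it avoids Perron--Frobenius-type reasoning entirely. What it costs: your comparison $v[k+1]\ge Gv[k]$ needs $1-v_i[k]\ge 1-v_{\infty,i}$, i.e. Corollary \ref{corollary:below_steady_state}, which rests on Assumption \ref{assumption:initial_state} ($v[1]\le v_\infty$); this is legitimate here because the lemma assumes Assumptions \ref{assumption:spreading_parameters}--\ref{assumption:above_threshold}, but it means your proof covers only initial states below the steady state, whereas the paper's connectivity argument does not use the upper bound $v[k]\le v_\infty$ at all and would in principle also handle initial states that violate Assumption \ref{assumption:initial_state}.
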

\begin{proof}
The proof consists of three parts: First, it follows from the NIMFA equations (\ref{NIMFA_disc_}) that $v[k]>0$ implies $v[k+1]>0$. Hence, it holds that $v[k]>0$ at every time $k\ge 1$.

Second, the viral state vector $v[k]$ does not approach zero arbitrarily close: Under Assumption \ref{assumption:above_threshold}, the origin is an unstable equilibrium of the NIMFA equations (\ref{NIMFA_disc_stacked}). From $v_i[k] > 0$ for every node $i$ we obtain that $x_1^T v[k] >0$, where $x_1>0$ is the principal eigenvector to the unstable eigenvalue $\rho(R)>1$ of the linearisation (\ref{eq:LTI_sys_NIMFA_zero}) of the NIMFA model (\ref{NIMFA_disc_stacked}) around the origin. 

Third, the viral state $v_i[k]$ of any single node $i$ does not approach zero arbitrarily close if $v[k]> 0$ at every time $k\ge 1$. (This is a stronger statement than the second statement). Since the viral state vector $v[k]$ does not approach zero arbitrarily close, there is at least one node $i$ such that $v_i[k] \ge v_{\textrm{min}, i}$ for some $v_{\textrm{min}, i} >0$ at every time $k \ge 1$. Due to Assumption \ref{assumption:connected_graph}, node $i$ has at least one neighbour $l \neq i$. Now suppose that for node $l$ the viral state $v_l[k]$ does approach zero arbitrarily close, then the NIMFA equations (\ref{NIMFA_disc_}) become
\begin{equation*}
v_l [k + 1] = \sum^N_{j=1} w_{l j} v_j[k] \ge w_{l i} v_i[k] \ge  w_{l i } v_{\textrm{min}, i}>0, 
\end{equation*}
which is a contradiction to $v_l[k] \rightarrow 0$. Thus, the viral states $v_l[k]$ of node $l$ does not approach zero arbitrarily close. Hence, if there is a $v_{\textrm{min}, i} >0$ for some node $i$ such that $v_i[k] \ge v_{\textrm{min}, i}$ at every time $k\ge 1$, then there is a $v_{\textrm{min}, l} >0$ for some node $l \neq i$ such that $v_l[k] \ge v_{\textrm{min}, l}$ at every time $k\ge 1$. Repeating this argument for every node yields that there is a positive vector $v_\textrm{min}>0$ such that $v[k]\ge v_\textup{\textrm{min}}$ holds at every time $k\ge 1$.
\end{proof}
As stated by Lemma \ref{lemma:v_min_existence}, if the viral state is not globally strictly increasing, then it holds that $v[k]\ge v_\textup{\textrm{min}}$ for some vector $v_\textup{\textrm{min}}>0$ at every time $k\ge 1$, and Proposition \ref{proposition:lower_bound_lti} yields that 
\begin{equation*}
 \lVert \Delta v[k] \rVert_2  \le  \left( 1  - q_\textrm{min} \frac{\gamma}{1- \gamma}  \right)^{k-1} \lVert v_\infty \rVert_2, \quad ~ k\ge 1,
\end{equation*}
where $\gamma = \textrm{min}\{v_{\textrm{min}, 1}, ..., v_{\textrm{min}, N}\}>0$. By setting $\alpha = \left( 1  - q_\textrm{min} \frac{\gamma}{1- \gamma}  \right)$, we obtain Corollary \ref{corollary:exponential_stability}.
 
\end{document}